\newtheorem{theorem}{Theorem}[section]
\newtheorem{lemma}[theorem]{Lemma}
\newtheorem{corollary}[theorem]{Corollary}
\newtheorem{proposition}[theorem]{Proposition}
\newtheorem{example}[theorem]{Example}
\newtheorem{problem}[theorem]{Problem}
\newtheorem{claim}[theorem]{Claim}
\theoremstyle{definition}
\numberwithin{equation}{section}
\newcommand{\e}{\varepsilon}
\newcommand{\w}{\omega}
\newcommand{\pr}{\mathrm{pr}}
\newcommand{\supp}{\mathsf{supp}}
\newcommand{\atom}{\mathsf{atom}}
\newcommand{\IR}{\mathbb R}
\newcommand{\NN}{\mathbb{N}}
\newcommand{\SM}{\setminus}
\newcommand{\xxx}{\mathbf x}
\newcommand{\A}{\mathcal A}
\newcommand{\Comp}{\mathbf{Comp}}
\newcommand{\IN}{\mathbb{N}}
\title[A simple Efimov space with sequentially-nice space of probability  measures]{A simple Efimov space with sequentially-nice space of probability  measures}
\author{Taras Banakh and Saak Gabriyelyan}
\address{Ivan Franko National University of Lviv (Ukraine) and Jan Kochanowski University in Kielce (Poland)}
\email{t.o.banakh@gmail.com}
\address{Department of Mathematics, Ben-Gurion University of the Negev, Beer-Sheva, P.O. 653, Israel}
\email{saak@math.bgu.ac.il}
\subjclass[2010]{03E65; 28A33; 54A35; 54D30}
\keywords{Efimov space, selective sequential pseudocompactness, space of probability measures, nonatomic probability measure, Gelfand--Phillips property}
\begin{document}

\begin{abstract}
Under Jensen's diamond principle $\diamondsuit$, we construct a simple Efimov space $K$ whose space of nonatomic probability measures  $P_{na}(K)$ is first-countable and sequentially compact. These two properties of $P_{na}(K)$ imply  that the space of probability measures $P(K)$ on $K$ is selectively sequentially pseudocompact and the Banach space $C(K)$ of continuous functions on $K$ has the Gelfand--Phillips property. We show also that any sequence of  probability measures on $K$ that converges to an atomic measure converges in norm, and any sequence of  probability measures on $K$ converging to zero in sup-norm has a subsequence converging to a nonatomic probability measure.
\end{abstract}

\maketitle

\section{Introduction}

For a Tychonoff space $K$, we denote by $P(K)$ the space of all regular probability measures on $K$ and by $C(K)$ the space of all real-valued continuous functions on $K$. Then the space $P(K)$ endowed with the weak$^\ast$ topology induced from the dual space $C(K)'$ of the Banach space $C(K)$ is a compact Hausdorff space.

The study of spaces of probability measures is an important direction of research in Measure Theory and  General Topology. For numerous results, open questions and historical remarks we refer the reader to the survey of Fedorchuk \cite{Fedorchuk-s} and the recent monograph of Bogachev \cite{Bog-18}. Many results in this area describe the interplay between  properties of the space $P(K)$ and  topological properties of the compact space $K$. For example, according to a classical result of Lindenstrauss \cite{Linden}, a compact space $K$ is Eberlein if and only if so is its space of probability measures $P(K)$. However, the question of whether $P(K)$ is  Corson compact for any Corson compact $K$ does not depend on ZFC.

The most important topological properties of compact spaces are properties of sequential type such as first-countability, sequential compactness, countable tightness or selective sequential pseudocompactness. Following \cite{DAS1}, we call a topological space $X$ {\em selectively sequentially pseudocompact} if for any sequence  $(U_n)_{n\in\w}$ of nonempty open sets in $X$ there exists a sequence $(x_n)_{n\in\w}\in\prod_{n\in\w}U_n$ containing a convergent subsequence. Clearly, every infinite compact selectively sequentially pseudocompact space contains many nontrivial convergent sequences. For any compact space $K$ we have the following implications.
\[
\xymatrix{
\mbox{metrizable} \ar@{=>}[d]  & {\substack{\mbox{sequentially} \\ \mbox{compact}}} \ar@{=>}[r] &  {\substack{\mbox{selectively sequentially} \\ \mbox{pseudocompact}}} \\
 {\substack{\mbox{first} \\ \mbox{countable}}}  \ar@{=>}[r] &\mbox{sequential} \ar@{=>}[r] \ar@{=>}[u] &  \mbox{countably tight}
}
\]

It is easy to show that the space $P(K)$ is metrizable if and only if so is $K$. However, already the first-countability of $K$ may not imply even the countable tightness of $P(K)$. Using the Continuum Hypothesis Haydon \cite{Haydon-72} constructed a first-countable non-metrizable compact space $K$ which is the support of some nonatomic probability measure. As it was noticed by Fedorchuk \cite{Fedorchuk-s}, for the Haydon compact $K$ the tightness of $P(K)$ is uncountable. These results motivate the inverse question:  Assume that $P(K)$ has one of the sequential properties from the diagram. {\em What  can be said about sequential properties of the compact space $K$}? Clearly, if $P(K)$ is countably tight (sequentially compact or first countable) then so is $K$. But what about the case when the compact space $P(K)$ is selectively sequentially pseudocompact? Is then also $K$ selectively sequentially pseudocompact? In our main result, under Jensen's diamond principle $\diamondsuit$, we answer this question in the negative in a very strong form. More precisely, under $\diamondsuit$ we construct a simple Efimov space $K$ whose space of probability measures $P(K)$ contains a dense first-countable sequentially compact subspace and hence is selectively sequentially pseudocompact. Let us recall that a compact space is called {\em Efimov} if it  contains neither nontrivial convergent sequences nor  a topological copy of the Stone-\v{C}ech compactification $\beta\w$ of the discrete space $\w$ (we recall the notion of a simple compact space below in the section Preliminaries). All known examples of Efimov spaces are constructed only under some additional set-theoretic assumptions: CH \cite{Fed75,DM}, $\diamondsuit$ \cite{Fed76,DP}, $\mathfrak s=\w_1<2^{\w_1}=\mathfrak c$ \cite{Fed77} or $\mathrm{cf}([\mathfrak s]^\w)=\mathfrak s<2^{\mathfrak s}<2^{\mathfrak c}$ \cite{Dow}.  Yet, the existence of a Efimov space in ZFC is a major unsolved problem of General and Set-Theoretic Topology, see \cite{Nyikos,Hart}.
Our example of a Efimov space $K$ is based on a classical construction of a  Efimov space due to Fedorchuk and follows \cite{DP}.

Our interest to the selective sequential pseudocompactness is motivated by a result of Drew\-nowski and Emmanuele \cite[Theorem~4.1]{DrewEm} which states that if a compact space $K$ is  selectively sequentially pseudocompact (that is, $K$ belongs to the class $\mathcal{K}''$ in the terminology of \cite{DrewEm}), then the Banach space $C(K)$ has the Gelfand--Phillips property. Recall that a Banach space $E$ has the {\em Gelfand--Phillips property} if every limited set in $E$ is precompact, where a bounded subset $B$ of  $E$ is called {\em limited} if each weak$^\ast$ null sequence $(\chi_n)_{n\in\w}$ in the dual space $E^*$ converges uniformly on $B$, that is $\lim_n \sup\{ |\chi_n(x)|:x\in B\} =0$. In \cite{BG-GP-Banach},  we prove that the selective sequential pseudocompactness of $K$ in this theorem of Drewnowski and Emmanuele can be weakened to the selective sequential pseudocompactness of any subspace $A\subseteq P(K)$ that contains $K$. In particular, if $P(K)$ is selectively sequentially pseudocompact, then the Banach space $C(K)$ has the Gelfand--Phillips property.
Since any compact selectively sequentially pseudocompact space has many nontrivial convergent sequences, Drewnowski and Emmanuele  ask in \cite{DrewEm} whether there exists a compact space $K$ without nontrivial convergent sequences such that the Banach space $C(K)$ still has  the Gelfand--Phillips property. Under CH, this question was answered negatively by Schlumprecht \cite{Schlumprecht-Ph}.
These results motivate a problem posed in \cite[Problem 2.7]{BG-GP-Banach} of finding a Efimov space $K$ whose probability measure space $P(K)$ has in addition a quite strong sequential type property of being a selectively sequentially pseudocompact space. 

Let $K$ be a compact space. The Hahn decomposition theorem states that any probability measure $\mu\in P(K)$ can be represented as a convex combination
\[
\mu=t\mu_a+(1-t)\mu_{na},
\]
where $t\in[0,1]$, $\mu_a$ is atomic and $\mu_{na}$ is nonatomic. Denote by $P_a(K)$ and $P_{na}(K)$ subspaces of $P(K)$ consisting of all atomic and all nonatomic measures, respectively. It is clear that $\delta[K]\subseteq P_a(K)$ and $P_a(K)\cap P_{na}(K)=\emptyset$. This observation motivates the problem of study of (topological) properties of the spaces $P_{a}(K)$ and $P_{na}(K)$ and their relationships to (topological) properties of $K$ and $P(K)$.

The set $P_a(K)$ is dense in $P(K)$ and can be considered also as a subset of the Banach space $\ell_1(K)$ as well as a subset of the Banach space $c_0(K)$.
Since $\ell_1(K)$ has the Schur property, one can naturally ask: Can  also $P_a(K)$ have the same property in the sense that every convergent sequence in $P_a(K)$ is also norm convergent? On the other hand, it is interesting to understand what happen with sequences $(\mu_n)_{n\in\w}$ converging to zero in the sup-norm of $c_0(K)$. This motivates the following notion. A sequence $(\mu_n)_{n\in\w}$ of measures on $K$ (not necessary atomic) is defined to be {\em $c_0$-vanishing} if
\[
\lim_{n\to\infty} \sup_{x\in K} \mu(\{x\})=0.
\]
It turns out that  our Efimov space $K$ satisfies very strong conditions: (1) every sequence in $P(K)$ that converges to an atomic measure is norm convergent, and (2) every $c_0$-vanishing sequence in $P(K)$ has a subsequence converging to a nonatomic measure.
The latter property of $K$ implies that the space $P_{na}(K)$ is sequentially compact.  
In fact, our Efimov space $K$  is {\em crowded} (i.e., it has no isolated points), and therefore the  set $P_{na}(K)$ is dense in $P(K)$. Moreover, $P_{na}(K)$ is a first-countable sequentially compact space. The space $P_{na}(K)$ is not metrizable (and cannot be metrizable as every metrizable sequentially compact space is compact and $P_{na}(X)$ is not compact for a crowded compact space $X$).  On the other hand, in Example  \ref{exa:Pna-metrizable} we construct a non-metrizable crowded  compact space $C$ whose space of nonatomic probability measures $P_{na}(C)$ is metrizable.   

Now we formulate our main result in which (under $\diamondsuit$) the clause (iv) solves Problem 2.7 from \cite{BG-GP-Banach}.

\begin{theorem} \label{t:main}
Under Jensen's Diamond Principle $\diamondsuit$, there exists a simple crowded Efimov space $K$ such that
\begin{enumerate}
\item[{\rm(i)}] every $c_0$-vanishing sequence $(\mu_n)_{n\in\w}$ in $P(K)$ has a subsequence converging to a nonatomic measure $\mu\in P_{na}(K)$;
\item[{\rm(ii)}]  every sequence in $P(K)$ that converges to an atomic measure converges  in norm, so the weak$^\ast$ convergence and the norm convergence coincide  on $P_a(K)$;
\item[{\rm(iii)}]  the space $P_{na}(K)$ is non-metrizable, first-countable, \v{C}ech-complete, sequentially compact, and the set $P_{na}(K)$ is dense in $P(K)$;
\item[{\rm(iv)}] the space $P(K)$ is selectively sequentially pseudocompact but not sequentially compact;
\item[{\rm(v)}]  the Banach space $C(K)$ has the Gelfand--Phillips property.
\end{enumerate}
\end{theorem}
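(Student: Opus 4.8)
The plan is to build $K$ as the limit of a continuous well-ordered inverse spectrum $\mathcal{S}=\{K_\alpha,\ \pi^\beta_\alpha:\alpha\le\beta<\w_1\}$ of metrizable compacta with surjective bonding maps, following the Fedorchuk-style construction presented in \cite{DP}. One starts from the Cantor cube $K_0=2^\w$ (to make $K$ crowded), takes inverse limits at limit stages, and at each successor stage passes from $K_\alpha$ to a \emph{simple} one-point resolution $\pi^{\alpha+1}_\alpha\colon K_{\alpha+1}\to K_\alpha$: a fully closed map all of whose fibres are singletons except the fibre over one designated point $x_\alpha\in K_\alpha$, which is a doubleton $\{x_\alpha^0,x_\alpha^1\}$. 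Such a spectrum has a \emph{simple} crowded compact limit $K=\varprojlim\mathcal{S}$ of weight $\w_1$. Since $\diamondsuit$ yields $\mathfrak c=\aleph_1$ and every $K_\alpha$ is second countable, one fixes a $\diamondsuit$-sequence $(S_\alpha)_{\alpha<\w_1}$ arranged so that, for each object we must control --- a one-to-one sequence of points of $K$, a $c_0$-vanishing sequence in $P(K)$, a sequence in $P(K)$ converging to an atomic measure, a countable relatively discrete subset of $K$ --- the set of stages $\alpha$ at which $S_\alpha$ correctly codes the ``trace'' of that object on $K_\alpha$ is stationary in $\w_1$.

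At the successor stage $\alpha\mapsto\alpha+1$ one reads $S_\alpha$ and, depending on what it codes, chooses $x_\alpha$ and the doubleton fibre to accomplish one of the following tasks. \textbf{(T1)} If $S_\alpha$ codes a one-to-one sequence $(x_n)_{n\in\w}$ in $K_\alpha$ converging to a point $x$, put $x_\alpha:=x$ and arrange, for a fixed partition $\w=A\sqcup B$ into two infinite sets, that the lifts of $(x_n)_{n\in A}$ and of $(x_n)_{n\in B}$ accumulate at $x_\alpha^0$ and at $x_\alpha^1$ respectively; this destroys $(x_n)$ as a convergent sequence in $K_{\alpha+1}$. \textbf{(T2)} If $S_\alpha$ codes a $c_0$-vanishing sequence $(\mu_n)_{n\in\w}$ in $P(K_\alpha)$, first pass to a weak$^\ast$-convergent subsequence with limit $\mu\in P(K_\alpha)$ --- the $c_0$-vanishing condition forces $\mu$ to be nonatomic --- and then choose $x_\alpha$ and the fibre so that the $\mu_n$ lift to measures on $K_{\alpha+1}$ a subsequence of which weak$^\ast$-converges to a \emph{nonatomic} measure, in such a way that this convergence survives all later bonding maps and hence persists to a nonatomic limit in $P_{na}(K)$. \textbf{(T3)} If $S_\alpha$ codes a sequence $(\nu_n)_{n\in\w}$ in $P(K_\alpha)$ converging to an atomic measure $\nu$, choose the resolution so that on $K_{\alpha+1}$ a lift of such a sequence can converge to an atomic measure only if it is already eventually norm-close to $\nu$ in $\ell_1(K)$, for instance by spreading the ``floating'' mass across the doubleton fibre over a suitable atom of $\nu$. \textbf{(T0)} If $S_\alpha$ codes a countable relatively discrete subset $A\subseteq K_\alpha$, choose the resolution so that $A$ is no longer $C^\ast$-embedded in $K_{\alpha+1}$; propagated to $K$, this prevents $K$ from containing a copy of $\beta\w$ (alternatively, the absence of $\beta\w$ can be deduced from general properties of fully closed maps once $K$ is known to have no convergent sequences). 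Throughout, all choices keep $K_{\alpha+1}$ crowded.

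With $K=\varprojlim\mathcal{S}$ in hand, the verification has an Efimov part and a measure part, both turning on reflection. $K$ is crowded because each $K_\alpha$ is and one-point resolutions preserve crowdedness; $K$ has no nontrivial convergent sequence because such a sequence would, for all large $\alpha$, project to a one-to-one convergent sequence in $K_\alpha$, be $\diamondsuit$-guessed at stationarily many stages by the standard club argument, and be killed by (T1); and $K$ contains no $\beta\w$ by (T0). Hence $K$ is a simple crowded Efimov space. For the measure part, a reflection lemma shows that any $c_0$-vanishing sequence in $P(K)$ has a subsequence whose trace on $K_\alpha$ is $\diamondsuit$-guessed for stationarily many $\alpha$, so (T2) yields (i); the analogous argument with (T3) yields (ii). Then (iii) follows: $P_{na}(K)$ is dense in $P(K)$ since $K$ is crowded (one approximates Dirac measures by nonatomic measures supported on fibres of $\pi_0$) and $P_{na}(K)$ is convex; $P_{na}(K)$ is \v{C}ech-complete because $P_{na}(K)=\bigcap_{n\in\w}\{\mu\in P(K):\sup_{x\in K}\mu(\{x\})<1/n\}$ is a $G_\delta$ in the compact space $P(K)$, the function $\mu\mapsto\sup_{x\in K}\mu(\{x\})$ being upper semicontinuous; $P_{na}(K)$ is sequentially compact because any sequence in it is trivially $c_0$-vanishing, so (i) applies; $P_{na}(K)$ is not metrizable since it is a proper dense, hence non-compact, subset of $P(K)$ while a metrizable sequentially compact space is compact; and first-countability is obtained via a further reflection argument showing each nonatomic $\mu\in P(K)$ is determined by its image in some metrizable quotient $K_\alpha$. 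Finally, for (iv), $P(K)$ is selectively sequentially pseudocompact because it has the dense sequentially compact subspace $P_{na}(K)$ (given nonempty open $U_n\subseteq P(K)$, pick $x_n\in U_n\cap P_{na}(K)$ and extract a convergent subsequence), and $P(K)$ is not sequentially compact because it contains the closed copy $\delta[K]$ of the infinite space $K$, which has no nontrivial convergent sequence and is therefore not sequentially compact; and (v) is immediate from (iv) together with the Drewnowski--Emmanuele theorem \cite{DrewEm} in the strengthened form of \cite{BG-GP-Banach}.

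I expect the main obstacle to lie in the simultaneous realisation of tasks (T1)--(T3) at a single resolution step: one must select one splitting point $x_\alpha$ and one doubleton fibre that at once breaks a prescribed convergent sequence of points, bends a prescribed $c_0$-vanishing sequence of measures toward a \emph{nonatomic} limit \emph{stably} (so that the limit is not disturbed by any later bonding map and truly persists to $K$), and blocks spurious atomic limits --- all while preserving crowdedness and without recreating convergent sequences or a copy of $\beta\w$. Making precise this ``stability under all later bonding maps'' for weak$^\ast$-limits, and proving the reflection lemmas that reduce the global statements (i) and (ii) about $P(K)$ to the countable stages $K_\alpha$, are where I expect the real work to be; the Efimov part, the $G_\delta$ observation in (iii), and the implications (iii)$\Rightarrow$(iv)$\Rightarrow$(v) are comparatively routine.
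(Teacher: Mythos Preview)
Your overall architecture is right and matches the paper, but there is a genuine gap in (T2), and it is exactly where the paper's real work lies.

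You write that after passing to a convergent subsequence of a $c_0$-vanishing sequence $(\mu_n)$ in $P(K_\alpha)$, ``the $c_0$-vanishing condition forces $\mu$ to be nonatomic''. This is false on a metrizable $K_\alpha$: take $\mu_n$ to be the uniform measure on $n$ distinct points clustering at some $z\in K_\alpha$; then $\sup_x\mu_n(\{x\})=1/n\to 0$, yet $\mu_n\to\delta_z$. (The implication you have in mind is Proposition~3.1 of the paper, but it holds only on \emph{sequentially discrete} compacta, which $K_\alpha$ is not.) So the limit $\mu$ on $K_\alpha$ may have a countable set $T_\mu$ of atoms, and the entire difficulty is to perform resolutions that spread the mass of $(\mu_n)$ near each atom so that \emph{every} accumulation point of the lifted sequence in the larger space is nonatomic. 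A single doubleton fibre over one point $x_\alpha$ cannot do this: it touches at most one atom and splits it into at most two pieces. The paper therefore spends the whole block $[\alpha,\alpha+\w)$ on this task: for each atom $a\in T_\mu$ it invokes the key Lemma~4.1 to produce a binary tree $(Z_{(a,s)})_{s\in 2^{<\w}}$ of clopen subsets of $K_\alpha\setminus\{a\}$ together with a thinned index set $I_\alpha$, with the property that along $I_\alpha$ the $\mu_n$-mass in any small neighbourhood of $a$ is eventually split almost evenly among the $2^k$ sets $Z_{(a,s)}$, $|s|=k$. Encoding all these trees into the coordinates $\alpha,\alpha+1,\dots$ yields $X_{\alpha+\w}$ in which any accumulation point of any lift of $(\mu_n)_{n\in I_\alpha}$ is nonatomic (condition~(9) in the paper). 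This is the step you flagged as ``stability under all later bonding maps'', and it is not routine; without Lemma~4.1 and the $\w$-block construction your (T2) does not go through.

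Two smaller points. First, (T3) is not needed as a separate construction task: once $K$ is sequentially discrete and satisfies (i), property (ii) follows purely from the measure-theoretic Lemmas~3.2--3.3 (this is how the paper obtains it). Second, (T0) is also unnecessary: simplicity of $K$ already excludes $\beta\w$ by Koppelberg's theorem, as you note parenthetically. Dropping (T0) and (T3) removes the worry you voice in your last paragraph about accomplishing several incompatible tasks at a single step; the paper only ever treats two genuinely distinct cases at a limit ordinal (your (T1) and the corrected (T2)), and they are mutually exclusive.
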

\noindent

Theorem~\ref{t:main} will be proved in Section~\ref{s:main} after some preliminary work done in Sections \ref{s:pre}--\ref{sec:enumeration}.


\section{Preliminaries}\label{s:pre}


All topological spaces in the paper are Tychonoff. A subset of a topological space is {\em clopen} if it is both closed and open. A topological space $X$ is called {\em crowded} if it has no isolated points. An indexed family of subsets $(F_i)_{i\in I}$ of a set is called {\em disjoint} if $F_i\cap F_j=\emptyset$ for any distinct indices $i,j\in I$. For two sets $A,B$ we write $A\subseteq^* B$ if $A\setminus B$ is finite.
If $f:X\to Y$ is a map and $F$ is a subset of $X$, we denote by $f[F]$ the image $\{f(x):x\in F\}$ of the set $F$. For a set $A$ and an indexed family $(x_\alpha)_{\alpha\in A}$ of points of a topological space $X$, we say that $(x_\alpha)_{\alpha\in A}$ converges to a point $x\in X$ and write $\lim_{\alpha\in A}x_\alpha=x$ if for every neighborhood $O_x\subseteq X$ the set $\{\alpha\in A:x_\alpha\notin O_x\}$ is finite.



The following lemma is Corollary 3.4 in \cite{DAS1}. We give its direct and short proof for the sake of completeness.
\begin{lemma} \label{l:dense-ssp}
If some dense subspace $Y$ of a topological space $X$ is selectively sequentially pseudocompact, then $X$ itself is selectively sequentially pseudocompact.
\end{lemma}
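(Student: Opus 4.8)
The plan is to unravel the definition directly, using density to transfer the given open sets into $Y$ and using the hereditary nature of sequential convergence to transfer the resulting convergent subsequence back to $X$.

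First I would take an arbitrary sequence $(U_n)_{n\in\w}$ of nonempty open subsets of $X$ and set $V_n:=U_n\cap Y$ for each $n\in\w$. Since $Y$ is dense in $X$ and each $U_n$ is nonempty and open, each $V_n$ is a nonempty open subset of $Y$. Applying the selective sequential pseudocompactness of $Y$ to the sequence $(V_n)_{n\in\w}$, I obtain a sequence $(y_n)_{n\in\w}\in\prod_{n\in\w}V_n$ together with a subsequence $(y_{n_k})_{k\in\w}$ that converges, in the space $Y$, to some point $y\in Y$.

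Next I would observe that $(y_{n_k})_{k\in\w}$ converges to $y$ also in $X$: if $O_y\subseteq X$ is any neighbourhood of $y$ in $X$, then $O_y\cap Y$ is a neighbourhood of $y$ in $Y$, so all but finitely many $y_{n_k}$ lie in $O_y\cap Y\subseteq O_y$; this is just the fact that convergence of a sequence to a point of a subspace is the same whether computed in the subspace or in the ambient space. Since moreover $y_n\in V_n\subseteq U_n$ for every $n\in\w$, the sequence $(y_n)_{n\in\w}$ belongs to $\prod_{n\in\w}U_n$ and contains the convergent subsequence $(y_{n_k})_{k\in\w}$. As $(U_n)_{n\in\w}$ was arbitrary, $X$ is selectively sequentially pseudocompact.

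There is no real obstacle here: the only two ingredients are that density of $Y$ guarantees $U_n\cap Y\neq\emptyset$ for nonempty open $U_n$, and that the subspace topology makes convergence in $Y$ to a point of $Y$ automatic convergence in $X$. The argument is therefore completely elementary, which is why it is only a corollary in \cite{DAS1}; the point of including it is self-containedness.
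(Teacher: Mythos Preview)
Your proof is correct and follows essentially the same approach as the paper's: intersect the open sets with $Y$, apply selective sequential pseudocompactness of $Y$, and note that the resulting convergent subsequence also converges in $X$. You merely spell out in more detail why convergence in $Y$ implies convergence in $X$, which the paper leaves as ``clear.''
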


\begin{proof}
Let $\{U_n\}_{n\in\w}$ be  a sequence of nonempty open subsets of $X$. Then $\{Y\cap U_n\}_{n\in\w}$ is  a sequence of nonempty open subsets of $Y$. Since $Y$ is selectively sequentially pseudocompact, there exists a sequence $(x_n)_{n\in\w}\in\prod_{n\in\w}Y\cap U_n$ containing a convergent subsequence $(x_{n_k})_{k\in\w}$. It is clear that $(x_n)_{n\in\w}\in\prod_{n\in\w} U_n$ and $(x_{n_k})_{k\in\w}$ converges in $X$. Thus $X$ is selectively  sequentially pseudocompact.
\end{proof}

By a {\em measure} on a compact space $X$ we understand any regular $\sigma$-additive measure $\mu:\mathcal B(X)\to [0,\infty)$ on the $\sigma$-algebra $\mathcal B(X)$ of Borel subsets of $X$. The {\em regularity} of $\mu$ means that for every Borel set $B\subseteq X$ and every $\e>0$ there exists a closed subset $F\subseteq B$ of $X$ such that $\mu(B\setminus F)<\e$. A measure $\mu$ is a {\em probability measure} if $\mu(X)=1$. The space $P(X)$ of probability measures on $X$ is endowed with the topology
generated by the subbase consisting of the sets
$\{\mu\in P(X):\mu(U)>a\}$ where $a\in\IR$ and $U$ runs over open subsets of $X$.

By the Riesz Representation Theorem \cite[7.6.1]{Jar}, the space $P(X)$ can be identified with the subspace
\[
\{\mu\in C'(X):\|\mu\|=1=\mu(1_X)\}
\]
of the dual Banach space $C'(X)$, endowed with the weak$^\ast$ topology. Besides the weak$^\ast$ topology we shall also use the topology generated by the norm on $P(X)$. For two measures $\mu,\lambda$ on $X$ their norm-distance can be found by the formula
\[
\|\mu-\lambda\|=\sup_{B\in\mathcal B(X)}|\mu(B)-\lambda(B)|.
\]

Each element $x\in X$ can be identified with the Dirac measure
\[
\delta_x:\mathcal B(X)\to\{0,1\},\quad\delta_x:B\mapsto\begin{cases}1&\mbox{if $x\in B$};\\
0&\mbox{otherwise}.
\end{cases}
\]
So, $X$ can be identified with the closed subspace $\{\delta_x:x\in X\}$ of $P(X)$.

The convex hull of $X$ in $P(X)$ is denoted by $P_\w(X)$ and elements of $P_\w(X)$ are called {\em finitely supported measures} on $X$. The {\em support} of a measure $\mu\in P(X)$ denoted by $\supp(\mu)$ is the smallest closed subset $F\subseteq K$ such that $\mu(X\setminus F)=0$. The support of a finitely supported measure $\mu\in P_\w(X)$ is finite and coincides with the set
\[
\atom(\mu)=\{x\in X:\mu(\{x\})>0\}
\]
 of atoms of $\mu$. The additivity of a probability measure $\mu\in P(X)$ implies that the set $\atom(\mu)$ is at most countable and $\mu(\atom(\mu))\le 1$.
A measure $\mu\in P(X)$ is called
\begin{itemize}
\item {\em atomic} if $\mu(\atom(\mu))=1$;
\item {\em nonatomic} if $\atom(\mu)=\emptyset$.
\end{itemize}
By $P_a(X)$ and $P_{na}(X)$ we denote the subspaces of $P(X)$ consisting of all atomic measures and all nonatomic measures on $X$, respectively. Note that the set $P_a(X)$ of atomic measures contains $P_\w(X)$ and hence is dense in $P(X)$. 

Every continuous map $f:X\to Y$ between compact spaces induces a continuous map $Pf:P(X)\to P(Y)$ assigning to every $\mu\in P(X)$ the measure $Pf(\mu):\mathcal{B}(Y)\to[0,1]$ defined by the formula
\[
Pf(\mu):B\mapsto \mu\big(f^{-1}[B]\big) \quad (B\in\mathcal{B}).
\]

\begin{lemma}\label{l:Cech}
For every \textup{(}crowded\textup{)} compact space $X$, the set $P_\w(X)$ is dense in $P(X)$ and the set $P_{na}(X)$ is a \textup{(}dense\textup{)} $G_\delta$-set in $P(X)$.
\end{lemma}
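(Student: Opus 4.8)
The plan is to establish the three assertions separately, working throughout with the identification of $P(X)$ with the weak$^\ast$-compact set $\{\mu\in C'(X):\|\mu\|=1=\mu(1_X)\}$, so that a neighbourhood base at $\mu\in P(X)$ is given by the sets $\{\nu:\ |\nu(f_i)-\mu(f_i)|<\e,\ i\le k\}$ with $f_1,\dots,f_k\in C(X)$ and $\e>0$. For the density of $P_\w(X)$, given such $f_1,\dots,f_k$ and $\e>0$ I would use compactness of $X$ and continuity of the $f_i$ to choose a finite Borel partition $X=B_1\cup\dots\cup B_m$ on each piece of which every $f_i$ oscillates by less than $\e$; picking $x_j\in B_j$ and putting $\nu=\sum_{j}\mu(B_j)\,\delta_{x_j}\in P_\w(X)$, the estimate $|\nu(f_i)-\mu(f_i)|=\bigl|\sum_j\int_{B_j}(f_i(x_j)-f_i)\,d\mu\bigr|\le\e$ shows $\nu$ lies in the prescribed neighbourhood. (Alternatively, $\overline{P_\w(X)}$ is a closed convex subset of $P(X)$ containing all Dirac measures, hence equals $P(X)$ by the Krein--Milman theorem, the extreme points of $P(X)$ being exactly the $\delta_x$.)

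For the $G_\delta$ property, consider $m\colon P(X)\to[0,1]$, $m(\mu)=\sup_{x\in X}\mu(\{x\})$. Since $P_{na}(X)=\{\mu:m(\mu)=0\}=\bigcap_{n\in\NN}\{\mu\in P(X):m(\mu)<1/n\}$, it suffices to prove that $A_c:=\{\mu\in P(X):m(\mu)\ge c\}$ is closed for each $c>0$. I would argue by nets: let $\mu_\alpha\to\mu$ with $\mu_\alpha\in A_c$, choose $x_\alpha$ with $\mu_\alpha(\{x_\alpha\})\ge c$, and — here compactness of $X$ is essential — pass to a subnet with $x_\alpha\to x\in X$. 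For an arbitrary open $U\ni x$, regularity of the space $X$ gives an open $V$ with $x\in V\subseteq\overline V\subseteq U$; as $x_\alpha\in V$ eventually, $\mu_\alpha(\overline V)\ge c$ eventually, and the closed-set portmanteau inequality $\limsup_\alpha\mu_\alpha(\overline V)\le\mu(\overline V)$ yields $\mu(U)\ge\mu(\overline V)\ge c$. Outer regularity of $\mu$ (a consequence of the regularity imposed in the definition of a measure) then gives $\mu(\{x\})=\inf\{\mu(U):U\ni x\text{ open}\}\ge c$, so $\mu\in A_c$.

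For the density of $P_{na}(X)$ when $X$ is crowded: $P_{na}(X)$ is convex, hence so is $\overline{P_{na}(X)}$, and since $P_\w(X)$ is dense and consists of convex combinations of Dirac measures, it suffices to show $\delta_x\in\overline{P_{na}(X)}$ for every $x\in X$. Given $x$, $f_1,\dots,f_k\in C(X)$ and $\e>0$, pick a neighbourhood $U$ of $x$ on which each $f_i$ oscillates by less than $\e$, and then an open $V$ with $x\in V\subseteq\overline V\subseteq U$. The key point is that $\overline V$ is a nonempty crowded (hence non-scattered) compact Hausdorff space: a point isolated in $\overline V$ would be an isolated point of the open set $V\subseteq X$, hence of $X$, contradicting crowdedness. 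A non-scattered compact Hausdorff space admits a continuous surjection $\varphi$ onto $[0,1]$; the induced affine continuous map $P\varphi\colon P(\overline V)\to P([0,1])$ has compact convex image containing every $\delta_t$ ($t\in[0,1]$), hence is onto, so there is $\lambda\in P(\overline V)$ with $P\varphi(\lambda)$ equal to Lebesgue measure, and such a $\lambda$ is automatically nonatomic since an atom of $\lambda$ would project to an atom of Lebesgue measure. Regarded as a measure on $X$ concentrated on $\overline V\subseteq U$, $\lambda$ satisfies $|\lambda(f_i)-f_i(x)|\le\e$ for all $i$, whence $\delta_x\in\overline{P_{na}(X)}$; convexity together with the density of $P_\w(X)$ then force $\overline{P_{na}(X)}=P(X)$.

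I expect the upper-semicontinuity of $m$ in the second part to be the main obstacle: the naive ``supremum of upper-semicontinuous functions'' reasoning breaks down, and one really needs compactness of $X$ to produce the limiting atom, combined with the closed-set portmanteau inequality and outer regularity. If one prefers to avoid the classical fact that a non-scattered compact Hausdorff space maps onto $[0,1]$, the nonatomic measure near $x$ in the third part can instead be built by hand from a Cantor scheme of nonempty open sets $V_s\subseteq U$ ($s\in 2^{<\w}$) with $\overline{V_{s0}},\overline{V_{s1}}\subseteq V_s$ disjoint — whose existence is precisely what crowdedness provides — and equipping the resulting Cantor set $\bigcap_{n}\bigcup_{|s|=n}\overline{V_s}$ with the push-forward of the coin-tossing measure.
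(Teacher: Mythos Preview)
Your proof is correct, but the methods in the second and third parts differ genuinely from the paper's.

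For the $G_\delta$ assertion, the paper avoids any net argument: it observes that the continuous map
\[
\Xi_n:P(X)\times X\times[\tfrac1n,1]\to P(X),\qquad (\mu,x,t)\mapsto (1-t)\mu+t\delta_x,
\]
has compact (hence closed) image $A_n=\{\mu:\exists x\ \mu(\{x\})\ge\tfrac1n\}$, so $P_{na}(X)=\bigcap_n(P(X)\setminus A_n)$ is $G_\delta$ in one line. Your approach via upper semicontinuity of $m(\mu)=\sup_x\mu(\{x\})$ reaches the same sets $A_c$ but closes them by hand, using compactness of $X$ to produce a limit atom and the portmanteau inequality for closed sets; this is perfectly valid and makes explicit exactly where compactness and regularity enter, at the cost of a longer argument.

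For the density of $P_{na}(X)$ in the crowded case, the paper stays with the same sets $A_n$: it shows each $A_n$ is nowhere dense by approximating an arbitrary finitely supported measure in a given open set by one whose atoms all have mass $<\tfrac1n$ (just spread each atom over more than $n$ nearby points), and then invokes the Baire Category Theorem. Your route is conceptually different: you prove directly that each $\delta_x$ lies in $\overline{P_{na}(X)}$ by producing a nonatomic measure on a small crowded compact neighbourhood (via a surjection onto $[0,1]$ or a Cantor scheme), and then exploit convexity of $P_{na}(X)$ together with density of $P_\w(X)$. The paper's argument is more elementary and self-contained; yours imports a bit more machinery (the non-scattered $\Rightarrow$ maps onto $[0,1]$ fact, or an explicit Cantor construction) but yields the stronger intermediate statement that nonatomic measures exist near every Dirac measure. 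One small remark: your claim that an isolated point of $\overline V$ would be isolated in $V$ deserves a word---a priori such a point could lie in $\overline V\setminus V$, but if $W\cap\overline V=\{p\}$ with $W$ open, then $W\cap V$ is nonempty (since $p\in\overline V$) and contained in $\{p\}$, forcing $p\in V$ and $\{p\}=W\cap V$ open in $X$.
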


\begin{proof}
The density of $P_\w(X)$ in $P(X)$ is a well-known fact, but for the convenience of the reader, we present here a simple proof. Fix any measure $\mu\in P(X)$ and a neighborhood $O_\mu$ of $\mu$ in $P(X)$. By the definition of the topology on $P(X)$, there exist $\e>0$ and open sets $U_1,\dots,U_n$ in $X$ such that
\[
\bigcap_{i=1}^n\{\lambda\in P(X):\lambda(U_i)>\mu(U_i)-\e\}\subseteq O_\mu.
\]
Let $\mathcal{B}$ be the smallest Boolean algebra of subsets of $X$ containing the family $\{U_1,\dots,U_n\}$, and let $\A$ be the set of atoms of $\mathcal{B}$. In each set $A\in\A$ choose a point $x_A\in A$ and observe that the finitely supported measure $\sum_{A\in\A}\mu(A)\cdot\delta_{x_A}$ belongs to the intersection $O_\mu\cap P_\w(X)$, witnessing that the set $P_\w(X)$ is dense in $P(X)$.

Next, we show that the set $P_{na}(X)$ is (dense) $G_\delta$ in $P(X)$.  For every natural number $n>0$, consider the continuous map
\[
\Xi_n:P(X)\times X\times[\tfrac{1}{n},1]\to P(X),\quad \Xi_n:(\mu,x,t)\mapsto (1-t)\mu+t\delta_x,
\]
 and observe that the image
\[
A_n=\Xi\big(P(X)\times X\times[\tfrac{1}{n},1]\big)=\big\{\mu\in P(X):\exists x\in X\;\;\mu(\{x\})\ge \tfrac{1}{n}\big\}
\]
is compact and hence closed in $P(X)$. Since $P_{na}(X)=\bigcap_{n=1}^\infty(P(X)\setminus A_n)$, the set $P_{na}(X)$ is of type $G_\delta$ in $P(X)$.

If the space $X$ is crowded, then every set $A_n$ is nowhere dense in $P(X)$. Indeed, given any nonempty open set $U\subseteq P(X)$ we can use the density of $P_\w(X)$ in $P(X)$ and find a finitely supported measure $\mu\in U\cap P_\w(X)$. By the definition of the topology on $P(X)$, there exist a positive real number $\e>0$ and a  family of open sets $(O_x)_{x\in\supp(\mu)}$ in $X$ such that $x\in O_x$ for every $x\in\supp(\mu)$ and
\[
\bigcap_{x\in\supp(\mu)}\{\lambda\in P(X):\lambda(O_x)>\mu(O_x)-\e\}\subseteq U.
\]
Since the space $X$ is Hausdorff, we can additionally assume that the indexed family $(O_x)_{x\in\supp(\mu)}$ is disjoint. The absence of isolated points in the crowded space $X$ ensures that for every $x\in\supp(\mu)$, the set $O_x$ is infinite and hence contains a finite subset $F_x$ of cardinality $|F_x|>n$. Then the finitely supported measure
\[
\sum_{x\in\supp(\mu)}\sum_{y\in F_x}\tfrac{\mu(\{x\})}{|F_x|}\delta_y
\]
belongs to the set $U\setminus A_n$, witnessing that the closed set $A_n$ is nowhere dense in $P(X)$.

By the Baire Theorem, the $G_\delta$-set $P_{na}(X)=\bigcap_{n\in\IN}(P(X)\setminus A_n)$ is dense in $P(X)$, being a countable intersection of open dense sets.
\end{proof}

\begin{example} \label{exa:Pna-metrizable}
There is a crowded compact nonmetrizable space $C$ such that the space $P_{na}(C)$ is metrizable.
\end{example}

\begin{proof}
Let $D$ be an uncountable discrete space. Denote by $C=\{\infty\} \cup (D\times 2^{\w})$ the one point compactification of the space $D\times 2^{\w}$. It is clear that $C$ is  crowded and non-metrizable, and $D\times 2^\w$ is a metrizable open dense subspace in $C$. By Theorem 1.4 in \cite{Ban1} and Theorem 4.4 in \cite{Ban2}, the subspace $\hat P(D\times 2^\w)=\{\mu\in P(C):\mu(D\times 2^\w)=1\}$ of $P(C)$ is metrizable and so is its subspace $P_{na}(C)\subset \hat P(D\times 2^\w)$.
\end{proof}

A compact space $X$ is {\em simple} if it is the limit of a well-ordered continuous spectrum $\{X_\alpha,\pi_\alpha^\beta:\alpha\le\beta<\w_1\}$ of length $\w_1$ consisting of zero-dimensional compact metrizable spaces $X_\alpha$ and simple bonding maps $\pi_{\alpha}^{\alpha+1}$. A map $f:X\to Y$ between topological spaces is called {\em simple} if there exists $y\in Y$ such that $1\le |f^{-1}(y)|\le 2$, $f^{-1}(y)$ is nowhere dense in $X$ and $|f^{-1}(z)|=1$ for every $z\in Y\setminus\{y\}$. For more information on inverse spectra, see \cite[\S2.5]{Eng}.

According to \cite{Kop,Dow}, simple compact spaces do not contain topological copies of $\beta\w$, so constructing a simple Efimov space $X$ one should only care about killing all convergent sequences in $X$. Another helpful property of simple compact spaces is the uniform regularity of nonatomic probability measures on such spaces, see \cite{DP}.

A measure $\mu\in P(X)$ on a compact space $X$ is called {\em uniformly regular} if there exists a continuous map $f:X\to Y$ to a compact metrizable space $Y$ such that $\mu(F)=\mu\big(f^{-1}[f[F]]\big)$ for every closed set $F\subseteq X$. 

\begin{lemma}\label{l:first-countable}
For every simple compact space $X$, the space $P(X)$ is first-countable at points of the set $P_{na}(X)$.
\end{lemma}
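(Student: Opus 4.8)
The plan is to deduce first-countability of $P(X)$ at a nonatomic measure from the uniform regularity of that measure, which is available on simple compact spaces by \cite{DP} (as recalled just before the lemma). Concretely, I would fix $\mu\in P_{na}(X)$ together with a continuous map $f\colon X\to Y$ onto a compact metrizable space $Y$ witnessing the uniform regularity of $\mu$, so that $\mu(F)=\mu\bigl(f^{-1}[f[F]]\bigr)$ for every closed set $F\subseteq X$. Since $Y$ is compact metrizable, the space $P(Y)$ is metrizable, hence first-countable; fix a countable neighborhood base $(O_j)_{j\in\w}$ at the point $\nu:=Pf(\mu)$ in $P(Y)$. The claim to be proved is that $\bigl((Pf)^{-1}[O_j]\bigr)_{j\in\w}$ is a neighborhood base at $\mu$ in $P(X)$; equivalently, near $\mu$ the topology of $P(X)$ is induced by the single continuous map $Pf\colon P(X)\to P(Y)$.

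The main step is to show that every subbasic neighborhood $\{\lambda\in P(X):\lambda(U)>a\}$ of $\mu$, where $U\subseteq X$ is open and $\mu(U)>a$, contains the $Pf$-preimage of some open neighborhood of $\nu$ in $P(Y)$. For this I would pass to the closed complement $G:=X\setminus U$ and set $W:=Y\setminus f[G]$, which is open in $Y$ because $f[G]$ is compact. By uniform regularity, $\mu\bigl(f^{-1}[f[G]]\bigr)=\mu(G)$, whence $f^{-1}[W]=X\setminus f^{-1}[f[G]]\subseteq X\setminus G=U$ and $\mu\bigl(f^{-1}[W]\bigr)=1-\mu(G)=\mu(U)>a$; in particular $\nu(W)=\mu\bigl(f^{-1}[W]\bigr)>a$. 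Since $\lambda(U)\ge\lambda\bigl(f^{-1}[W]\bigr)=Pf(\lambda)(W)$ for every $\lambda\in P(X)$, we get $(Pf)^{-1}\bigl[\{\theta\in P(Y):\theta(W)>a\}\bigr]\subseteq\{\lambda\in P(X):\lambda(U)>a\}$. Running this through each factor of an arbitrary basic neighborhood $\bigcap_{i=1}^{n}\{\lambda:\lambda(U_i)>a_i\}$ of $\mu$ and intersecting the resulting open sets $\{\theta\in P(Y):\theta(W_i)>a_i\}$ produces an open neighborhood $O\ni\nu$ in $P(Y)$ with $(Pf)^{-1}[O]$ contained in the given basic neighborhood; then some $O_j\subseteq O$, so $(Pf)^{-1}[O_j]$ lies inside it, which finishes the proof.

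Almost everything here is soft once uniform regularity is available, so the only real ingredient is the cited theorem that nonatomic measures on simple compact spaces are uniformly regular; in particular the specific spectral/bonding structure of $X$ enters only through that input. The one subtlety I would flag is the direction of semicontinuity: a weak$^\ast$ neighborhood of $\mu$ controls $\lambda(U)$ only from below for open $U$, so one cannot compare $U$ with a larger open set; instead one must exhibit inside $U$ an open set that is pulled back from the metrizable quotient $Y$ and still carries the full $\mu$-mass of $U$, and it is precisely uniform regularity — applied to the closed set $X\setminus U$ — that supplies such a set.
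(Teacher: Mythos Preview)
Your proof is correct and follows the same two-step strategy as the paper: first invoke \cite{DP} to get uniform regularity of any nonatomic measure on a simple compact space, then deduce first-countability of $P(X)$ at uniformly regular measures. The only difference is that the paper cites Proposition~2 of \cite{Pol} (and the corresponding statement in \cite{DP}) for this second step, whereas you supply a direct self-contained argument showing that the preimages under $Pf$ of a countable base at $\nu=Pf(\mu)$ form a base at $\mu$; your computation with $G=X\setminus U$ and $W=Y\setminus f[G]$ is exactly the standard proof of that cited fact.
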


\begin{proof}
By Lemma~4.1  of \cite{DP},  any nonatomic measure on a simple compact space is uniformly regular. By (the proof of) Proposition~2 in \cite{Pol} (see also the statement (v) in \cite[p.2605]{DP}) the space $P(X)$ is first countable at each uniformly regular measure. Consequently, $P(X)$ is first-countable at points of the set $P_{na}(X)$.
\end{proof}


\section{Measures on sequentially discrete compact spaces}


A topological space $X$ is defined to be {\em sequentially discrete} if every convergent sequence in $X$ is eventually constant. So, $X$ is sequentially discrete if and only if $X$ contains no non-trivial convergent sequences.

We shall use the following probabilistic characterization of sequentially discrete compact spaces which can have an independent value.

\begin{proposition} \label{p:compact-no-sequence}
For a compact space $K$ the following assertions are equivalent:
\begin{enumerate}
\item[{\rm(i)}] $K$ is sequentially discrete;
\item[{\rm(ii)}] if a sequence $(\mu_n)_{n\in\w}\in P(K)^\w$ converges to a measure $\mu\in P(K)$, then
\[
\limsup_{n\to\infty}\sup_{x\in K}\big(\mu_n(\{x\})-\mu(\{x\})\big)\leq 0.
\]
\end{enumerate}
\end{proposition}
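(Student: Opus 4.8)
The plan is to prove both implications by contraposition. For $(ii)\Rightarrow(i)$, suppose $K$ is not sequentially discrete, so there is a non-trivial convergent sequence $(x_n)_{n\in\w}$ in $K$ with limit $x_\infty$; passing to a subsequence I may assume the $x_n$ are pairwise distinct and distinct from $x_\infty$. Then the Dirac measures $\mu_n=\delta_{x_n}$ converge in the weak$^*$ topology to $\mu=\delta_{x_\infty}$ (since $\int f\,d\delta_{x_n}=f(x_n)\to f(x_\infty)$ for each $f\in C(K)$). But $\mu_n(\{x_n\})-\mu(\{x_n\})=1-0=1$ for every $n$ (as $x_n\neq x_\infty$), so $\sup_{x\in K}\big(\mu_n(\{x\})-\mu(\{x\})\big)=1$ for all $n$, and the $\limsup$ is $1>0$. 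This violates $(ii)$.

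The harder direction is $(i)\Rightarrow(ii)$, and this is where the main obstacle lies. Assume $K$ is sequentially discrete, and suppose toward a contradiction that some sequence $\mu_n\to\mu$ in $P(K)$ has $\limsup_n\sup_{x\in K}\big(\mu_n(\{x\})-\mu(\{x\})\big) > 3\e$ for some $\e>0$. Passing to a subsequence, for each $n$ pick a point $x_n\in K$ with $\mu_n(\{x_n\})-\mu(\{x_n\}) > 3\e$; in particular $\mu_n(\{x_n\})>3\e$ and $\mu(\{x_n\})<\mu_n(\{x_n\})$. Since $\mu$ has at most countably many atoms and each has positive mass summing to at most $1$, only finitely many atoms of $\mu$ have mass exceeding $\e$; discarding those finitely many $x_n$'s if they coincide with such atoms, I may assume $\mu(\{x_n\})<\e$ for all $n$, hence $\mu_n(\{x_n\})>3\e$ and, more usefully, that the $\mu$-mass at each $x_n$ is small. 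By passing to a further subsequence (using compactness of $K$) I may assume $(x_n)$ converges to some point $x_\infty\in K$. Because $K$ is sequentially discrete, this convergent sequence is eventually constant: there is $N$ with $x_n=x_\infty$ for all $n\ge N$. But then for all $n\ge N$ we have $\mu_n(\{x_\infty\})=\mu_n(\{x_n\})>3\e$, while $\mu(\{x_\infty\})=\mu(\{x_n\})<\e$, so $\limsup_n\mu_n(\{x_\infty\})\ge 3\e > \mu(\{x_\infty\})+\e$.

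To reach a contradiction I now use weak$^*$ convergence to pin down $\limsup_n\mu_n(\{x_\infty\})\le\mu(\{x_\infty\})$. Fix $\delta>0$. Since $\{x_\infty\}$ is closed and $\mu$ is regular, there is an open set $U\ni x_\infty$ with $\mu(U)<\mu(\{x_\infty\})+\delta$; by Urysohn's lemma choose $f\in C(K)$ with $1_{\{x_\infty\}}\le f\le 1_U$ and $0\le f\le 1$. Then $\mu_n(\{x_\infty\})\le\int f\,d\mu_n\to\int f\,d\mu\le\mu(U)<\mu(\{x_\infty\})+\delta$, so $\limsup_n\mu_n(\{x_\infty\})\le\mu(\{x_\infty\})+\delta$; letting $\delta\to 0$ gives $\limsup_n\mu_n(\{x_\infty\})\le\mu(\{x_\infty\})$, contradicting the previous paragraph. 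The main technical point to handle carefully is the reduction step: ensuring, after passing to subsequences, that the relevant atoms of $\mu$ have mass bounded away from the jump size $3\e$, so that the eventual-constancy of $(x_n)$ actually produces a point where $\mu_n$ has uniformly large mass but $\mu$ does not — the countability and summability of the atom set of $\mu$ is exactly what makes this work.
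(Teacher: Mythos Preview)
Your argument for $(ii)\Rightarrow(i)$ is correct and matches the paper. The gap is in $(i)\Rightarrow(ii)$, at the sentence ``By passing to a further subsequence (using compactness of $K$) I may assume $(x_n)$ converges to some point $x_\infty\in K$.'' Compactness does \emph{not} yield convergent subsequences in general: that is sequential compactness, and the whole point of the spaces under consideration (e.g.\ Efimov spaces, or $\beta\w$) is that they are compact yet contain no nontrivial convergent sequences. In a sequentially discrete space a sequence has a convergent subsequence only if some value is repeated infinitely often, and nothing you have written rules out the $x_n$ being pairwise distinct. So as it stands the argument collapses precisely at the step where sequential discreteness is supposed to bite.

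What is missing is the paper's key observation: the finite set $F=\{z\in K:\mu(\{z\})\ge 3\e\}$ absorbs all cluster points of $(x_n)$. Indeed, for any $p\notin F$ one has $\mu(\{p\})<3\e$, so by regularity there is a closed neighbourhood $N_p$ with $\mu(N_p)<3\e$; since $\{\lambda:\lambda(K\setminus N_p)>1-3\e\}$ is an open neighbourhood of $\mu$, eventually $\mu_n(N_p)<3\e$, whence $x_n\notin N_p$ (as $\mu_n(\{x_n\})>3\e$). A finite cover of $K\setminus U$ by such $N_p$ shows that any open $U\supseteq F$ contains all but finitely many $x_n$. Thus $F\cup\{x_n:n\in\w\}$ is closed in $K$, hence compact; being countable it is metrizable, and a metrizable sequentially discrete space is discrete, hence finite. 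Now some value $z$ does repeat infinitely often, and your Urysohn argument finishes the job. (Your ``reduction'' to $\mu(\{x_n\})<\e$ is then unnecessary.) With this patch your endgame is in fact cleaner than the paper's: the paper reaches the contradiction by a renormalisation trick---peeling off $a\delta_z$ and showing the rescaled sequence violates the maximality of $\sup A$---whereas your direct upper bound $\limsup_n\mu_n(\{z\})\le\mu(\{z\})$ suffices once a repeated $z$ has been found.
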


\begin{proof}
(i)$\Rightarrow$(ii) Suppose for a contradiction that there is  a sequence $(\mu_n)_{n\in\w}\in P(K)^\w$ converging to some measure $\mu\in P(K)$ such that
\begin{equation} \label{equ:compact-non-0}
\limsup_{n\to\infty}\sup_{x\in K}\big(\mu_n(\{x\})-\mu(\{x\})\big)> 0.
\end{equation}
Let $A$ be the set of all numbers $c\in (0,1]$ for which  there is  a sequence $(\mu_n)_{n\in\w}\in P(K)^\w$ converging to some measure $\mu\in P(K)$ such that
\[
\limsup_{n\to\infty}\sup_{x\in K}\big(\mu_n(\{x\})-\mu(\{x\})\big)\geq c.
\]By our assumption, $A$ is not empty and hence  $A$ has the least upper bound $\sup(A)\in (0,1]$. It is clear that $(0,\sup(A))\subseteq A$. Therefore the inequality $\tfrac{\sup(A)}{1-\sup(A)} >\sup(A)$ implies that there exist elements $a,a'\in A\cap (0,1)$ such that $\tfrac{a}{1-a}>\sup(A)$ and $a<a'$. Choose  a sequence $(\mu_n)_{n\in\w}\in P(K)^\w$ converging to a measure $\mu\in P(K)$ such that
\begin{equation} \label{equ:compact-non-11}
\lim_{n\to\infty}\sup_{x\in K}\big(\mu_n(\{x\})-\mu(\{x\})\big)\geq a'>a
\end{equation}
Passing to a subsequence, we can assume additionally that
\begin{equation} \label{equ:compact-non-1}
\inf_{n\in\w}\sup_{x\in K}\big(\mu_n(\{x\})-\mu(\{x\})\big)> a.
\end{equation}


Consider the sets $F=\{x\in K:\mu(\{x\})\geq a\}$ and $F_n=\{x\in K:\mu_{n}(\{x\})\geq a\}$ for $n\in\w$. The additivity of the measure $\mu$ implies that these sets are finite of  cardinality $\leq 1/a$. The condition~(\ref{equ:compact-non-1}) ensures that for every $n\in\w$, the set $F_n$ is not empty.

\begin{claim}
The union $C:=F\cup\bigcup_{n\in\w} F_n$ is a compact subset of $K$.
\end{claim}

\begin{proof}
Indeed, since $F$ is finite, to prove the claim it suffices to show that every open neighborhood $U\subseteq K$ of the set $F$ contains all but finitely many sets $F_n$. Observe that for every $x\in K\setminus U$, we have $\mu(\{x\})<a$ and, by the regularity of $\mu$ and $K$, there exists a closed neighborhood $N_x\subseteq K$ of $x$ such that $\mu(N_x)<a$ and $N_x\cap F=\emptyset$. By the compactness of $K\setminus U$, there exist a finite subset $E\subseteq K\setminus U$ such that $K\setminus U\subseteq\bigcup_{x\in E}{N_x}$. Then the  set
\[
\mathcal{O}_\mu=\{\lambda\in P(K): \lambda(K\setminus N_x)>1-a \; \mbox{ for all }\; x\in E\}
\]
is an open neighborhood of $\mu$ in $P(K)$. Since the sequence $(\mu_n)_{n\in\w}$ converges to $\mu$, there exists $m\in\w$ such that $\mu_k\in \mathcal{O}_\mu$ for all $k\geq m$. We show that $F_k\subseteq U$ for every $k\geq m$. Indeed, assuming that $F_k\not\subseteq U$ for some $k\geq m$, we can find a point $z\in F_k\setminus U\subseteq K\setminus U$. Since $\{N_x:x\in E\}$ covers $K\setminus U$, there exists $x\in E$ such that $z\in N_x$.
It follows from $\mu_k\in \mathcal{O}_\mu$ that $\mu_k(K\setminus N_x)>1-a$ and hence $\mu_k(\{z\})\leq \mu_k(N_x)<a$, which contradicts the definition of the set $F_k\ni z$. This contradiction shows that every neighborhood of $F$ contains all but finitely many sets $F_n$. Thus $C$ is compact.
\end{proof}
\smallskip

Being countable, the compact space $C$ is metrizable. Since the space $K$ is sequentially discrete, the compact metrizable subspace $C$ of $K$ is discrete and hence finite. Therefore $F$ is an open subset of $C$, and hence $F_n\subseteq F$ for all sufficiently large numbers $n$. By the Pigeonhole Principle, for some subset $F'\subseteq F$ the set $J=\{n\in\w:F_n=F'\}$ is infinite.
Choose any point $z\in F'$. Since $\mu(\{z\})\ge a$, we can find a unique measure $\mu'\in P(K)$ such that $\mu=a\delta_z+(1-a)\mu'$. By the same reason, for every $n\in J$, there exists a unique measure $\mu'_n\in P_a(K)$ such that $\mu_n=a\delta_z+(1-a)\mu_n'$.
\smallskip

The convergence $(\mu_n)_{n\in\w}\to \mu$ implies the convergence $(\mu_n')_{n\in J}\to \mu'$. For every $n\in J$, (\ref{equ:compact-non-1}) implies
\[
\sup_{x\in K}\big(\mu'_n(\{x\})-\mu'(\{x\})\big) =\sup_{x\in K}\Big(\tfrac1{1-a}\big(\mu_n(\{x\})-a\delta_z(\{z\})-\mu(\{x\})+a\delta_z(\{z\})\Big)\geq \tfrac{a}{1-a}.
\]
Therefore, the number $\tfrac{a}{1-a}$ belongs to $A$ which is impossible since $\tfrac{a}{1-a}>\sup(A)$. This contradiction finishes the proof of the implication (i)$\Rightarrow$(ii).
\smallskip

(ii)$\Rightarrow$(i) If $K$ is not sequentially discrete, then $K$ contains a sequence of points $(z_n)_{n\in\w}$ that converge to some point $z\in K\setminus\{z_n\}_{n\in\w}$. Then the sequence of Dirac measures $(\delta_{z_n})_{n\in\w}$ converges to $\delta_z$ in the space $P(K)$, however
\[
\limsup_{n\to\infty}\sup_{x\in K}\big(\delta_{z_n}(\{x\})-\delta_z(\{x\})\big)\ge\limsup_{n\to\infty}\big(\delta_{z_n}(\{z_n\})-\delta_x(\{z_n\}\big)=1\not\le 0.
\]
\end{proof}

We recall that a sequence of measures $(\mu_n)_{n\in\w}$ on a Tychonoff space $X$ is {\em $c_0$-vanishing} if
\[
\lim_{n\to\infty}\sup_{x\in X}\mu_n(\{x\})=0.
\]
The following lemma will be used in the proof of our main theorem.

\begin{lemma} \label{cl:converge3}
Let $X$ be a sequentially discrete compact space such that every $c_0$-vanishing sequence $(\lambda_n)_{n\in\w}$ of probability measures on $X$  has a subsequence that converges to a nonatomic measure on $X$. Then for any sequence $(\mu_n)_{n\in\w}\in P(X)^\w$ that converges to a measure $\mu\in P(X)$, we have $\lim_{n\to\infty}\sup_{x\in X}|\mu_n(\{x\})-\mu(\{x\})|=0$.
\end{lemma}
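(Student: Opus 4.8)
The plan is to prove the two one-sided estimates separately. As $X$ is sequentially discrete, Proposition~\ref{p:compact-no-sequence} gives at once
\[
\limsup_{n\to\infty}\ \sup_{x\in X}\bigl(\mu_n(\{x\})-\mu(\{x\})\bigr)\le 0,
\]
so it remains to control $\sup_{x\in X}\bigl(\mu(\{x\})-\mu_n(\{x\})\bigr)$. Fixing $\e>0$, the set $\{x\in X:\mu(\{x\})\ge\e\}$ is finite, and outside it $\mu(\{x\})-\mu_n(\{x\})\le\mu(\{x\})<\e$; hence it suffices to prove the property $(\ast)$ that $\mu_n(\{z\})\to\mu(\{z\})$ for every atom $z$ of $\mu$. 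Since $\limsup_n\mu_n(\{z\})\le\mu(\{z\})$ always holds (from the displayed inequality, or because $\{z\}$ is closed), $(\ast)$ amounts to the lower estimate $\liminf_n\mu_n(\{z\})\ge\mu(\{z\})$.

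Suppose $(\ast)$ fails for some atom $z$ with $a:=\mu(\{z\})>0$. Passing to a subsequence (still convergent to $\mu$) we may assume $\mu_n(\{z\})\to b<a$; put $c:=a-b>0$. Two facts drive the argument. First, for every open $U\ni z$ the definition of the topology of $P(X)$ gives $\liminf_n\mu_n(U)\ge\mu(U)\ge a$, so $\mu_n(U\setminus\{z\})=\mu_n(U)-\mu_n(\{z\})\ge c/2$ for all large $n$: a fixed surplus of mass piles up in every neighborhood of $z$ while escaping the point $z$ itself. Second, by Proposition~\ref{p:compact-no-sequence} the numbers $\e_n:=\max\{1/n,\ \sup_x(\mu_n(\{x\})-\mu(\{x\}))\}$ tend to $0$, and since $\mu_n(\{x\})\le\mu(\{x\})+\e_n$ for all $x$, every atom of $\mu_n$ of mass $\ge\sqrt{\e_n}$ lies either at $z$ or in the finite set $F_n:=\{w\ne z:\mu(\{w\})\ge\sqrt{\e_n}-\e_n\}$, with $|F_n|\le 2/\sqrt{\e_n}$ for large $n$.

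Now I would build from $(\mu_n)$ a $c_0$-vanishing sequence contradicting the hypothesis. Put $D_n:=1-\mu_n(\{z\})-\sum_{w\in F_n}\mu_n(\{w\})$ and
\[
\lambda_n:=\frac1{D_n}\Bigl(\mu_n-\mu_n(\{z\})\delta_z-\sum_{w\in F_n}\mu_n(\{w\})\delta_w\Bigr)\in P(X).
\]
Using $\mu_n(\{w\})\le\mu(\{w\})+\e_n$ and $|F_n|\e_n\le 2\sqrt{\e_n}\to 0$ one gets $\liminf_n D_n\ge 1-b-\mu(\atom(\mu)\setminus\{z\})=(1-\mu(\atom(\mu)))+c\ge c>0$, so $D_n\ge c/2$ eventually; and as the atoms of $\lambda_n$ avoid $\{z\}\cup F_n$, they all have mass $<\sqrt{\e_n}/D_n\le 2\sqrt{\e_n}/c\to 0$, so $(\lambda_n)$ is $c_0$-vanishing. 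By the hypothesis on $X$, some subsequence $\lambda_{n_k}$ converges to a nonatomic measure $\kappa$, and it remains to get a contradiction by showing $\kappa(\{z\})>0$. By regularity, $\kappa(\{z\})=\inf\{\kappa(N):N\text{ a closed neighborhood of }z\}$, and since $\sum_{w\in\atom(\mu)\setminus\{z\}}\mu(\{w\})\le 1$, the closed neighborhoods $N$ with $\mu(\atom(\mu)\cap N\setminus\{z\})<c/8$ are cofinal among all closed neighborhoods of $z$. For such an $N$, combining the first fact applied to $\mathrm{int}(N)$, the bound $\sum_{w\in F_n\cap N}\mu_n(\{w\})\le\mu(\atom(\mu)\cap N\setminus\{z\})+|F_n|\e_n<c/8+2\sqrt{\e_n}$, and $D_n\le 1$, a short computation gives $\lambda_n(N)\ge c/2$ for all large $n$; hence $\kappa(N)\ge\limsup_k\lambda_{n_k}(N)\ge c/2$ (the inequality $\limsup_k\lambda_{n_k}(N)\le\kappa(N)$ holding because $N$ is closed). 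Taking the infimum over these $N$ yields $\kappa(\{z\})\ge c/2>0$, contradicting the nonatomicity of $\kappa$. This proves $(\ast)$, and hence the lemma.

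The main obstacle is the construction of $(\lambda_n)$ in the last paragraph: the truncation level (here $\sqrt{\e_n}$) must decay fast enough to kill the residual atoms of $\lambda_n$, yet slowly enough that removing the roughly $1/\sqrt{\e_n}$ heavy atoms of $\mu_n$ — each incurring an error $\le\e_n$ — neither pushes the normalizing constant $D_n$ down to $0$ nor erases, for the small neighborhoods $N$ of $z$, the surplus mass $\ge c/2$ that the failure of $(\ast)$ forces near $z$. The subtlety is precisely that this mass sits \emph{near} $z$ but not \emph{at} $z$, so it cannot be read off by evaluating $\lambda_n$ at $z$; one recovers it only through the closed neighborhoods of $z$, where the inequality for closed sets together with the regularity identity $\kappa(\{z\})=\inf_N\kappa(N)$ turns the surviving concentration into an atom of $\kappa$.
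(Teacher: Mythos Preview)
Your argument is correct and genuinely different from the paper's. Both proofs start from Proposition~\ref{p:compact-no-sequence} to get the one-sided bound $\limsup_n\sup_x(\mu_n(\{x\})-\mu(\{x\}))\le 0$, and both feed a carefully built $c_0$-vanishing sequence into the hypothesis to reach a contradiction, but the constructions diverge from there.

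The paper works \emph{globally}: after a diagonalization it forms the pointwise-limit atomic measure $\mu_\infty=\sum_x\bigl(\lim_n\mu_n(\{x\})\bigr)\delta_x$, splits $\mu_n-\mu_\infty$ into its positive and negative parts $\mu_n^{\pm}$, and shows each sequence $(\mu_n^{\pm})$ is $c_0$-vanishing. Passing to subsequences so that both converge to nonatomic limits $\mu^{\pm}$ forces $\mu-\mu_\infty=\mu^+-\mu^-$ to be nonatomic, i.e.\ $\mu(\{x\})=\mu_\infty(x)$ for every $x$, which together with the $c_0$-vanishing of $\mu_n^{\pm}$ yields the conclusion directly. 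You instead work \emph{locally} at a single offending atom $z$: you strip the heavy atoms (those forced by Proposition~\ref{p:compact-no-sequence} to shadow atoms of $\mu$) at a carefully tuned threshold $\sqrt{\e_n}$, normalize, and then argue via Portmanteau on closed neighborhoods of $z$ that the nonatomic limit $\kappa$ must still carry an atom at $z$.

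What each buys: the paper's route avoids the balancing act you flag in your last paragraph---no truncation level to choose, no neighborhood estimates---at the price of handling signed measures and extending the hypothesis to non-normalized sequences (their Claim~\ref{cl:new5}). Your route stays within probability measures throughout and makes the geometric source of the contradiction (mass piling up near but not at $z$) transparent; the cost is the bookkeeping with $F_n$, $D_n$, and the cofinal family of closed neighborhoods. One cosmetic point: with your stated constants the final estimate actually gives $\lambda_n(N)\ge c/4$ rather than $c/2$ (you lose $c/8$ to the atoms in $N$ and another $c/8$ absorbing $2\sqrt{\e_n}$), but of course any positive lower bound suffices.
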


\begin{proof}
To derive a contradiction, assume that  $\lim_{n\in\w}\sup_{x\in X}|\mu_n(\{x\})-\mu(\{x\})|\ne 0$ for some sequence $(\mu_n)_{n\in\w}\in P(K)^\w$ that converges to a measure $\mu\in P(K)$. Replacing $(\mu_n)_{n\in\w}$ by a suitable subsequence, we can additionally assume that
\begin{itemize}
\item for every $x\in X$, the limit $\mu_\infty(x)=\lim_{n\in\w}\mu_n(\{x\})$ exists, and
\item the limit $a=\lim_{n\in\w}\sup_{x\in X}|\mu_n(\{x\})-\mu(\{x\})|$ exists and hence is strictly positive.
\end{itemize}

\begin{claim}\label{cl:new1}
For every $x\in X$ we have $\mu_\infty(x)\le\mu(\{x\})$.
\end{claim}

\begin{proof}
Suppose for a contradiction that $\mu_\infty(x)>\mu(\{x\})$ for some $x\in X$. Choose any real number $a$ with $\mu_\infty(x)>a>\mu(\{x\})$. Then $\mu(X\setminus\{x\})=1-\mu(\{x\})>1-a$, and hence the set
\[
U=\{\lambda\in P(X): \lambda(X\setminus\{x\})>1-a\}
\]
is an open neighborhood of the measure $\mu$ in $P(X)$. Since $\lim_{n\in\w}\mu_n=\mu$ and $\lim_{n\in\w}\mu_n(\{x\})=\mu_\infty(x)>a$, there exists $n\in\w$ such that $\mu_n(\{x\})>a$ and $\mu_n\in U$. The latter inclusion implies that $\mu_n(X\setminus\{x\})>1-a$ and hence $\mu_n(\{x\})=1-\mu_n(X\setminus\{x\})<1-(1-a)=a$, which is a desired contradiction completing the proof of the claim.
\end{proof}

Claim~\ref{cl:new1} ensures that $\sum_{x\in X}\mu_\infty(x)\le\sum_{x\in X}\mu(\{x\})\le 1$ which implies that
\[
\mu_\infty=\sum_{x\in X}\mu_\infty(x)\delta_x
\]
is a well-defined atomic measure on $X$. For every $n\in\w$, consider the  measures
\[
\mu_n^+=\max\{\mu_n-\mu_\infty,0\}\quad\mbox{and}\quad
\mu_n^-=\max\{\mu_\infty-\mu_n,0\}
\]
and observe that $\mu_n-\mu_\infty=\mu_n^+-\mu_n^-$.

By Claim~\ref{cl:new1} and Proposition~\ref{p:compact-no-sequence}, we have
\[
\begin{aligned}
0\le \lim_{n\in\w}\sup_{x\in X}\mu_n^+(\{x\})& =\lim_{n\in\w}\sup_{x\in X}\max\{(\mu_n(\{x\})-\mu_\infty(\{x\}),0\}\\
& \le\lim_{n\in\w}\sup_{x\in X}\max\{(\mu_n(\{x\})-\mu(\{x\}),0\}= 0,
\end{aligned}
\]
which means that the sequence of measures $(\mu_n^+)_{n\in\w}$ in $c_0$-vanishing.

\begin{claim}
The sequence $(\mu_n^-)_{n\in\w}$ is $c_0$-vanishing.
\end{claim}

\begin{proof}
Since $\sum_{x\in X}\mu_\infty(x)\le 1$, for every $\e>0$, there exists a finite set $F\subseteq X$ such that $\sup_{x\in X\setminus F}\mu_\infty(x)<\e$. Since, for every $x\in F$, the sequence $(\mu_n(\{x\}))_{n\in\w}$ converges to $\mu_\infty(x)$, there exists $m\in\w$ such that $\sup_{n\ge m}\max_{x\in F}|\mu_n(\{x\})-\mu_\infty(x)|<\e$. Then for every $n\ge m$ and every $x\in F$, we have
\[
\mu_n^-(\{x\})=\max\{0,\mu_\infty(x)-\mu_n(\{x\})\}\le|\mu_n(\{x\})-\mu_\infty(x)|<\e.
\]
Also for every $x\in X\setminus F$ we have
\[
\mu_n^-(\{x\})=\max\{0,\mu_\infty(x)-\mu_n(\{x\})\}\le\mu_\infty(\{x\})<\e.
\]
Therefore, $\sup_{n\ge m}\sup_{x\in X}\mu^-_n(\{x\})\le\e$ and the sequence $(\mu_n^-)_{n\in\w}$ is $c_0$-vanishing.
\end{proof}

\begin{claim}\label{cl:new5}
Each norm-bounded $c_0$-vanishing sequence of measures $(\lambda_n)_{n\in\w}$ on $X$ contains a subsequence that converges to a nonatomic measure.
\end{claim}

\begin{proof}
We lose no generality assuming that $\lambda_n(X)\ne 0$ for every $n\in\w$. Since $\sup_{n\in\w}\lambda_n(X)=\sup_{n\in\w}\|\lambda_n\|<\infty$, we can find an infinite subset $I\subseteq \w$ such that the sequence $(\lambda_n(X))_{n\in I}$ converges to some real number $a\ge 0$. If $a=0$, then $(\lambda_n(X))_{n\in I}$ converges to the zero measure, which is nonatomic.

If $a>0$, then $\big(\tfrac{\lambda_n}{\|\lambda_n\|})_{n\in I}$ is a $c_0$-vanishing sequence of atomic probability measures. By the assumption of the lemma, there exists an infinite set $J\subseteq I$ such that the subsequence $\big(\frac{\lambda_n}{\|\lambda_n\|}\big)_{n\in J}$ converges to some nonatomic probability measure $\lambda_\infty$. Since $\|\lambda_n\|\to a\ne 0$, it follows  that the sequence $(\lambda_n)_{n\in J}$ converges to the nonatomic measure $a\lambda_\infty$.
\end{proof}

By Claim~\ref{cl:new5},  for the $c_0$-vanishing sequence $(\mu^+_n)_{n\in\w}$ there exists an infinite set $I^+\subseteq \w$ such that the subsequence $(\mu^+_n)_{n\in I^+}$ converges to some nonatomic measure $\mu^+$. Since the sequence $(\mu_n^-)_{n\in I^+}$ is $c_0$-vanishing, there exists an infinite set $I\subseteq I^+$ such that the sequence $(\mu_n^-)_{n\in I}$ converges to some nonatomic measure $\mu^-$ on $X$. Then the sequence $(\mu_n-\mu_\infty)_{n\in I}=(\mu_n^+-\mu_n^-)_{n\in I}$ converges to the nonatomic sign-measure $\mu^+-\mu^-$. On the other hand, this sequence converges to the measure $\mu-\mu_\infty$. Therefore, the measure $\mu-\mu_\infty=\mu^+ -\mu^-$ is nonatomic and hence
$$
\mu(\{x\})=\mu_\infty(\{x\})\quad \mbox{ for every } \; x\in X.
$$

Now the $c_0$-vanishing property of the sequences $(\mu_n^+)_{n\in\w}$ and $(\mu_n^-)_{n\in\w}$ guarantees that
\[
\begin{aligned}
\limsup_{n\in \w}\sup_{x\in X}|\mu_n(\{x\})-\mu(\{x\})| & {=}\limsup_{n\in\w}\sup_{x\in X}\big|\mu_n(\{x\})-\mu_\infty(\{x\})\big|\\
& =\limsup_{n\in\w}\sup_{x\in X}\max\{\mu_n^+(\{x\}),\mu_n^-(\{x\})\}=0.
\end{aligned}
\]
\end{proof}

\begin{lemma} \label{l:Pna-norm}
Let $X$ be a sequentially discrete compact space such that every $c_0$-vanishing sequence $(\lambda_n)_{n\in\w}$ of probability measures on $X$  has a subsequence that converges to a nonatomic measure on $X$. Then any sequence $(\mu_n)_{n\in\w}\in P(X)^\w$ that converges to an atomic measure $\mu\in P_a(X)$ converges to $\mu$ in norm.
\end{lemma}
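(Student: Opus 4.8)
The plan is to argue by contradiction, decomposing each $\mu_n$ into its atomic and nonatomic parts and showing that each part behaves well in norm. Suppose $\mu_n$ does not converge to $\mu$ in norm. Passing to a subsequence we may assume $\|\mu_n-\mu\|\ge\e$ for some $\e>0$ and all $n\in\w$. Since $X$ satisfies the hypotheses of Lemma~\ref{cl:converge3} and $\mu_n\to\mu$, we get $\lim_{n\to\infty}\sup_{x\in X}|\mu_n(\{x\})-\mu(\{x\})|=0$; in particular $\mu_n(\{x\})\to\mu(\{x\})$ for every $x\in X$. Using the Hahn decomposition, write $\mu_n=\mu_n^a+\mu_n^{na}$, where $\mu_n^a=\sum_{x\in X}\mu_n(\{x\})\,\delta_x$ is the atomic part of $\mu_n$ and $\mu_n^{na}=\mu_n-\mu_n^a\ge 0$ is its nonatomic part, so that $\mu_n^a(X)+\mu_n^{na}(X)=1$.

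The first and main step is to prove that $\mu_n^{na}(X)\to 0$. Each $\mu_n^{na}$ is nonatomic, so $\sup_{x\in X}\mu_n^{na}(\{x\})=0$ and $(\mu_n^{na})_{n\in\w}$ is $c_0$-vanishing; it is also norm-bounded by $1$. Fix an arbitrary infinite set $M\subseteq\w$. By Claim~\ref{cl:new5}, the subsequence $(\mu_n^{na})_{n\in M}$ has a further subsequence $(\mu_n^{na})_{n\in J}$, with $J\subseteq M$, converging weak$^\ast$ in $P(X)$ to some nonatomic measure $\rho\ge 0$. Then along $J$ the atomic parts converge weak$^\ast$, $\mu_n^a=\mu_n-\mu_n^{na}\to\mu-\rho$, and since $\mu_n^a\ge 0$ the limit is a positive measure, i.e. $\rho\le\mu$. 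As $\mu$ is atomic, it is concentrated on the countable set $S=\atom(\mu)$, whence $\rho(X\setminus S)\le\mu(X\setminus S)=0$ and $\rho(S)=\sum_{x\in S}\rho(\{x\})=0$ by $\sigma$-additivity and nonatomicity of $\rho$; therefore $\rho=0$. Since $\lambda\mapsto\lambda(X)$ is weak$^\ast$ continuous, $\mu_n^{na}(X)\to\rho(X)=0$ along $J$. As every infinite subset of $\w$ thus contains a further infinite subset along which $\mu_n^{na}(X)\to 0$, we conclude $\mu_n^{na}(X)\to 0$.

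In the second step we control the atomic parts. From the first step, $\sum_{x\in X}\mu_n(\{x\})=\mu_n^a(X)=1-\mu_n^{na}(X)\to 1=\sum_{x\in X}\mu(\{x\})$, while $\mu_n(\{x\})\to\mu(\{x\})$ for each $x$. For nonnegative summable functions, pointwise convergence together with convergence of the sums forces convergence in $\ell_1$ (Scheff\'e's lemma: $\sum_x|\mu_n(\{x\})-\mu(\{x\})|=\sum_x\mu_n(\{x\})+\sum_x\mu(\{x\})-2\sum_x\min\{\mu_n(\{x\}),\mu(\{x\})\}$, and the last sum tends to $\sum_x\mu(\{x\})$ by dominated convergence). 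Hence $\sum_{x\in X}|\mu_n(\{x\})-\mu(\{x\})|\to 0$, i.e. $\|\mu_n^a-\mu\|\to 0$. Since also $\|\mu_n^{na}\|=\mu_n^{na}(X)\to 0$, we obtain $\|\mu_n-\mu\|\le\|\mu_n^a-\mu\|+\|\mu_n^{na}\|\to 0$, contradicting $\|\mu_n-\mu\|\ge\e$.

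The part I expect to be hardest is the vanishing of the nonatomic masses: this is exactly where the standing hypothesis on $c_0$-vanishing sequences enters (through Claim~\ref{cl:new5}), and one must carefully verify that a weak$^\ast$ cluster measure of $(\mu_n^{na})_{n\in\w}$ is nonatomic, that such a cluster measure is dominated by the atomic measure $\mu$ and hence is $0$, and that ranging over all subsequences recovers convergence of the numerical sequence $(\mu_n^{na}(X))_{n\in\w}$. The remaining ingredients — the Hahn-decomposition bookkeeping and the $\ell_1$-version of Scheff\'e's lemma — are routine.
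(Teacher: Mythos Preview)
Your proof is correct, but it takes a different and somewhat heavier route than the paper's.

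The paper's argument is entirely elementary once Lemma~\ref{cl:converge3} is invoked: from $\sup_{x}|\mu_n(\{x\})-\mu(\{x\})|\to 0$ and the atomicity of $\mu$, one chooses a \emph{finite} set $F\subseteq\atom(\mu)$ with $\mu(F)>1-\tfrac{\e}{4}$, observes that $\mu_n(F)>1-\tfrac{\e}{2}$ for large $n$, and then bounds $\|\mu_n-\mu\|$ by the $\ell_1$-distance on $F$ plus the two small remainders $\mu(X\setminus F)$ and $\mu_n(X\setminus F)$. No further appeal to the $c_0$-vanishing hypothesis is needed, and the nonatomic part of $\mu_n$ never has to be isolated: it is automatically absorbed into the remainder $\mu_n(X\setminus F)$.

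Your approach instead separates $\mu_n=\mu_n^a+\mu_n^{na}$ and spends a genuine extra step showing $\mu_n^{na}(X)\to 0$, re-invoking the standing hypothesis via Claim~\ref{cl:new5} and a domination argument ($\rho\le\mu$ atomic $\Rightarrow\rho=0$). Only then do you run Scheff\'e on the atomic parts. This is a valid and conceptually clean decomposition, and the domination argument is a nice observation in its own right; but it is more machinery than required. Two minor remarks: the contradiction framing and the initial passage to a subsequence are dead weight, since your argument in fact proves $\|\mu_n-\mu\|\to 0$ directly; and the phrase ``converging weak$^\ast$ in $P(X)$'' for $\rho$ is slightly off, as $\rho$ is a (possibly zero) nonnegative measure rather than a probability measure.
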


\begin{proof}
To show that $\lim_{n\in\w}\|\mu_n-\mu\|=0$, fix any $\e>0$. Since $\|\mu\|=\sum_{x\in \atom(\mu)}\mu(\{x\})=1$, there exists a finite set $F\subseteq\atom(\mu)$ such that $\sum_{x\in F}\mu(\{x\})>1-\tfrac{1}{4}\e$. Let $\mu=\lambda+\nu$ where $\lambda=\sum_{x\in F}\mu(\{x\})\delta_x$ and $\nu=\mu-\lambda$. It follows that $\|\lambda\|=\mu(F)=\sum_{x\in F}\mu(\{x\})>1-\tfrac{1}{4}\e$ and hence $\|\nu\|=\mu(X\setminus F)<\frac{1}{4}\e$.

By Lemma \ref{cl:converge3}, $\lim_{n\in\w}\sup_{x\in X}|\mu_n(\{x\})-\mu(\{x\})|=0$, and hence there exists $m\in\w$ such that
\[
\sup_{n\ge m}\max_{x\in F}|\mu_n(\{x\})-\mu(\{x\})|<\tfrac{\e}{4|F|} \quad \mbox{ for every $n\geq m$}.
\]
For every $n\ge m$, write the measure $\mu_n$ as $\mu_n=\lambda_n+\nu_n$ where $\lambda_n=\sum_{x\in F}\mu_n(\{x\})\delta_x$ and $\nu_n=\mu_n-\lambda_n$. Observe that
$$\|\lambda_n\|=\mu_n(F)=\sum_{x\in F}\mu_n(\{x\})>\sum_{x\in F}\big(\mu(\{x\})-\tfrac{\e}{4|F|}\big)=\mu(F)-\tfrac14\e>1-\tfrac14\e-\tfrac14\e=1-\tfrac12\e$$and hence $\|\nu_n\|=\nu_n(X\setminus F)<\tfrac12\e$.
Then for every $n\geq m$, we have
\[
\begin{aligned}
\|\mu_n-\mu\| & =\|\lambda_n+\nu_n-\lambda-\nu\|\le\|\nu_n\|+\|\nu\|+\|\lambda_n-\lambda\|\\
& =\|\nu_n\|+\|\nu\|+\sum_{x\in F}|\mu_n(\{x\})-\mu(\{x\})|<\tfrac{1}{2}\e+\tfrac{1}{4}\e+|F|\cdot \tfrac{1}{4|F|}\e=\e,
\end{aligned}
\]
which means that $\lim_{n\in\w}\|\mu_n-\mu\|=0$.
\end{proof}

In the proof of Theorem~\ref{t:main} we shall need some results on the preservation of the $c_0$-vanishing property by projections $\pi^{\w_1}_\alpha:2^{\w_1}\to 2^\alpha$, $\pi^{\w_1}_\alpha:x\mapsto x{\restriction}_\alpha$. For every $\alpha\in\w_1$, the projection $\pi^{\w_1}_\alpha$ induces a continuous map $P\pi^{\w_1}_\alpha:P(2^{\w_1})\to P(2^\alpha)$ between the corresponding spaces of probability measures.

\begin{lemma}\label{l:atomic}
For any nonatomic measure $\mu\in P_{na}(2^{\w_1})$ there exists an ordinal $\alpha\in \w_1$ such that for any ordinal $\beta\in[\alpha,\w_1)$ the measure $P\pi^{\w_1}_\beta(\mu)\in P(2^\beta)$ is nonatomic.
\end{lemma}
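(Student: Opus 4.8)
The plan is to prove the equivalent statement that the pushforward measures $P\pi^{\w_1}_\beta(\mu)$ can be atomic only on a bounded set of ordinals $\beta<\w_1$. For $\beta<\w_1$ and $y\in 2^\beta$ I write $[y]=\{x\in 2^{\w_1}:x{\restriction}_\beta=y\}$ for the corresponding fibre, which is a closed, hence Borel, subset of $2^{\w_1}$, so that $P\pi^{\w_1}_\beta(\mu)(\{y\})=\mu([y])$. The first thing I would record is a monotonicity principle: if $\beta\le\beta'<\w_1$ then $[y']\subseteq[\,y'{\restriction}_\beta\,]$ for every $y'\in 2^{\beta'}$, and therefore every atom of $P\pi^{\w_1}_{\beta'}(\mu)$ restricts to an atom of $P\pi^{\w_1}_{\beta}(\mu)$. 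In other words, the set of $\beta<\w_1$ with $P\pi^{\w_1}_\beta(\mu)\in P_{na}(2^\beta)$ is upward closed in $\w_1$. Hence it suffices to exhibit a single ordinal $\alpha<\w_1$ with $P\pi^{\w_1}_\alpha(\mu)$ nonatomic; the same $\alpha$ then witnesses the lemma.

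Next I would argue by contradiction, assuming that $P\pi^{\w_1}_\beta(\mu)$ has an atom for every $\beta<\w_1$. Set $c_\beta=\sup\{\mu([y]):y\in 2^\beta\}$. Since the fibres $\{[y]:y\in 2^\beta\}$ are pairwise disjoint, only finitely many of them can lie above any fixed positive level, so the supremum $c_\beta$ is attained and, by the assumption, $c_\beta>0$. The inclusion $[y']\subseteq[\,y'{\restriction}_\beta\,]$ shows that $(c_\beta)_{\beta<\w_1}$ is non-increasing; as a non-increasing $\w_1$-indexed sequence of reals it must be eventually constant (otherwise one builds a strictly decreasing $\w_1$-sequence of reals), say equal to $c>0$ for all $\beta\ge\alpha_1$. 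For $\beta\in[\alpha_1,\w_1)$ put $\mathcal A_\beta=\{y\in 2^\beta:\mu([y])=c\}$, a finite nonempty set. For $\alpha_1\le\beta\le\beta'$ the restriction $y\mapsto y{\restriction}_\beta$ maps $\mathcal A_{\beta'}$ into $\mathcal A_\beta$, and this map is injective: two distinct members of $\mathcal A_{\beta'}$ extending a common $y\in\mathcal A_\beta$ would give two disjoint subsets of $[y]$ each of measure $c$, contradicting $\mu([y])=c$. Consequently $(|\mathcal A_\beta|)_{\beta\ge\alpha_1}$ is a non-increasing sequence of positive integers, hence eventually constant, say for $\beta\ge\alpha_2$; for $\alpha_2\le\beta\le\beta'$ the map $\mathcal A_{\beta'}\to\mathcal A_\beta$ is an injection between finite sets of equal cardinality, hence a bijection.

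I would then extract a coherent thread. Fix $y_{\alpha_2}\in\mathcal A_{\alpha_2}$, and for each $\beta\in[\alpha_2,\w_1)$ let $y_\beta$ be the unique element of $\mathcal A_\beta$ with $y_\beta{\restriction}_{\alpha_2}=y_{\alpha_2}$; uniqueness of such lifts also yields $y_{\beta'}{\restriction}_\beta=y_\beta$ whenever $\alpha_2\le\beta\le\beta'$, so the $y_\beta$ cohere and define a point $x^*\in 2^{\w_1}$ with $x^*{\restriction}_\beta=y_\beta$ for all $\beta\in[\alpha_2,\w_1)$. Since $\mu$ is nonatomic, $\mu(\{x^*\})=0$, so by the regularity of $\mu$ there is an open neighbourhood of $x^*$ of $\mu$-measure $<c$, and shrinking it we may take it to be a basic clopen set $U=\{x\in 2^{\w_1}:x{\restriction}_S=x^*{\restriction}_S\}$ with $S\subseteq\w_1$ finite and $\mu(U)<c$. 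Choosing a countable ordinal $\delta\ge\alpha_2$ with $S\subseteq\delta$, we get $[y_\delta]=\{x\in 2^{\w_1}:x{\restriction}_\delta=x^*{\restriction}_\delta\}\subseteq U$, whence $c=\mu([y_\delta])\le\mu(U)<c$ — a contradiction. Thus $P\pi^{\w_1}_\alpha(\mu)$ is nonatomic for some $\alpha<\w_1$, and by the first paragraph this $\alpha$ works.

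The step I expect to require the most care is the passage from "positive‑measure fibres persisting for every $\beta<\w_1$" to a genuine atom of $\mu$: one cannot simply intersect the fibres $[y_\beta]$ over all $\beta<\w_1$ and invoke $\sigma$‑additivity, since this is an uncountable intersection. The remedy is the two successive stabilizations — first of the numbers $c_\beta$, then of the cardinalities $|\mathcal A_\beta|$ — which together produce a single coherent thread $(y_\beta)$ "converging" to a point $x^*$; after that, nonatomicity of $\mu$ combined with (outer) regularity forces an already countable fibre $[y_\delta]$ around $x^*$ to have measure strictly below $c$, contradicting $\mu([y_\delta])=c$.
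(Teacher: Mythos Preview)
Your proof is correct, but the route is genuinely different from the paper's. The paper argues directly: using nonatomicity and regularity, for each $n\in\omega$ it covers $2^{\omega_1}$ by finitely many basic clopen sets $O_{x,n}$ of $\mu$-measure $<2^{-n}$, each determined by a finite coordinate set $F_{x,n}\subseteq\omega_1$; then any countable ordinal $\alpha$ containing all these $F_{x,n}$ works, since for $\beta\ge\alpha$ every fibre $(\pi^{\omega_1}_\beta)^{-1}(\{a\})$ sits inside one of the $O_{x,n}$ and so has measure $<2^{-n}$ for every $n$. Your argument instead proceeds by contradiction and a double stabilization: first of the maximal fibre mass $c_\beta$, then of the number of fibres realizing it, which lets you thread a coherent point $x^*$ that would have to be an atom of $\mu$. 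The paper's approach is shorter and yields an explicit $\alpha$ from a compactness cover, essentially showing that $\mu$ is ``determined by countably many coordinates''; your approach is a clean application of the principle that non-increasing $\omega_1$-indexed real (and integer) sequences stabilize, and avoids the explicit cover argument. One small remark: your parenthetical justification ``otherwise one builds a strictly decreasing $\omega_1$-sequence of reals'' is a bit loose at limit stages; the cleanest way to see the stabilization is to take $c=\inf_\beta c_\beta$, pick $\beta_n$ with $c_{\beta_n}<c+1/n$, and set $\alpha_1=\sup_n\beta_n$.
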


\begin{proof}
Since $\mu$ is nonatomic and regular, for every $n\in\w$ and $x\in 2^{\w_1}$, there exists a neighborhood $O_{x,n}\subseteq 2^{\w_1}$ of $x$ such that $\mu(O_{x,n})<\frac1{2^n}$. We lose no generality assuming that $O_{x,n}$ is of basic form $\{y\in 2^{\w_1}:y{\restriction}_{F_{x,n}}=x{\restriction}_{F_{x,n}}\}$ for some finite set $F_{x,n}\subseteq\w_1$. By the compactness of $2^{\w_1}$ for every $n\in\w$, there exists a finite set $E_n\subseteq 2^{\w_1}$ such that $2^{\w_1}=\bigcup_{x\in E_n}O_{x,n}$. We claim that any countable ordinal $\alpha$ containing the countable set $\bigcup_{n\in\w}\bigcup_{x\in E_n}F_{x,n}$ has the desired property: for any ordinal $\beta\in[\alpha,\w_1)$ the measure $\lambda=P\pi^{\w_1}_\beta(\mu)$ is nonatomic. To derive a contradiction, assume that $\lambda(\{a\})>0$ for some $a\in 2^\beta$. Find $n\in\w$ such that $\frac1{2^n}<\lambda(\{a\})$. Choose any point $b\in 2^{\w_1}$ such that $\pi^{\w_1}_\beta(b)=a$ and find $x\in E_n$ such that $b\in O_{x,n}$. Then
\[
(\pi^{\w_1}_\beta)^{-1}(\{a\})=\{z\in 2^{\w_1}:z{\restriction}_\beta=b{\restriction}_\beta\}\subseteq \{z\in 2^{\w_1}:z{\restriction}_{F_{x,n}}=b{\restriction}_{F_{x_n}}\}=O_{x,n}
\]
and hence
\[
\lambda(\{a\})=\mu\big(\big(\pi^{\w_1}_\beta\big)^{-1}(\{a\})\big)\subseteq\mu(O_{x,n})<\tfrac1{2^n},
\]
which contradicts the choice of $n$.
\end{proof}

\begin{lemma}\label{l:vanish}
For every $c_0$-vanising sequence of measures $(\mu_n)_{n\in\w}\in\big(P(2^{\w_1})\big)^\w$ there exists a countable ordinal $\alpha$ such that for any ordinal $\beta\in[\alpha,\w_1)$ the sequence of measures $(P\pi^{\w_1}_\beta(\mu_n)\big)_{n\in\w}$ is $c_0$-vanishing.
\end{lemma}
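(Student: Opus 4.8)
The plan is to split each $\mu_n$ into its atomic and nonatomic parts, control the projection of each part by a quantity depending on $n$ only through $\sup_{x}\mu_n(\{x\})$, and finally absorb the countably many countable ordinals produced along the way into a single countable $\alpha$. Concretely, for each $n\in\w$ write $\mu_n=\mu_n^a+\mu_n^{na}$, where $\mu_n^a$ is the restriction of $\mu_n$ to the countable (hence Borel) set $\atom(\mu_n)$, so that $\mu_n^a=\sum_{x\in\atom(\mu_n)}\mu_n(\{x\})\,\delta_x$ is atomic of total mass $\le 1$, and $\mu_n^{na}=\mu_n-\mu_n^a$. Since $0\le\mu_n^{na}\le\mu_n$ and $\mu_n$ is regular, $\mu_n^{na}$ is again a regular Borel measure, and a one-line check on singletons shows it is nonatomic. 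As pushforwards are additive, $P\pi^{\w_1}_\beta(\mu_n)=P\pi^{\w_1}_\beta(\mu_n^a)+P\pi^{\w_1}_\beta(\mu_n^{na})$ for every $\beta<\w_1$.

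For the nonatomic part I would apply Lemma~\ref{l:atomic} to the probability measure $\mu_n^{na}/\|\mu_n^{na}\|$ (there is nothing to do when $\mu_n^{na}=0$) to get a countable ordinal $\alpha_n$ such that $P\pi^{\w_1}_\beta(\mu_n^{na})$ is nonatomic, hence $P\pi^{\w_1}_\beta(\mu_n^{na})(\{a\})=0$ for every $a\in 2^\beta$, whenever $\beta\in[\alpha_n,\w_1)$. For the atomic part, since $\atom(\mu_n)$ is countable one may pick for each pair of distinct atoms a coordinate separating them and let $\beta_n<\w_1$ exceed all these countably many coordinates; then $\pi^{\w_1}_\beta$ is injective on $\atom(\mu_n)$ for every $\beta\ge\beta_n$, so each fiber $(\pi^{\w_1}_\beta)^{-1}(\{a\})$ contains at most one atom of $\mu_n$, and therefore $P\pi^{\w_1}_\beta(\mu_n^a)(\{a\})=\mu_n^a\big((\pi^{\w_1}_\beta)^{-1}(\{a\})\big)\le\sup_{x\in 2^{\w_1}}\mu_n(\{x\})$ for all $a\in 2^\beta$.

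Combining the two estimates with $\gamma_n:=\max\{\alpha_n,\beta_n\}$ gives $\sup_{a\in 2^\beta}P\pi^{\w_1}_\beta(\mu_n)(\{a\})\le\sup_{x\in 2^{\w_1}}\mu_n(\{x\})$ for every $\beta\in[\gamma_n,\w_1)$. Since $\{\gamma_n:n\in\w\}$ is a countable family of countable ordinals, $\alpha:=\sup_{n\in\w}\gamma_n$ is a countable ordinal, and for every $\beta\in[\alpha,\w_1)$ and every $n\in\w$ the displayed inequality holds; letting $n\to\infty$ and using that $(\mu_n)_{n\in\w}$ is $c_0$-vanishing, we conclude that $\lim_{n\to\infty}\sup_{a\in 2^\beta}P\pi^{\w_1}_\beta(\mu_n)(\{a\})=0$, i.e.\ $(P\pi^{\w_1}_\beta(\mu_n))_{n\in\w}$ is $c_0$-vanishing.

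The set-theoretic bookkeeping (countable suprema of countable ordinals) is routine; the only points that need care are the two reductions in the middle paragraph: verifying that $\mu_n^{na}$ is a genuine regular nonatomic measure so that Lemma~\ref{l:atomic} applies to its normalization, and observing that it is precisely the injectivity of $\pi^{\w_1}_\beta$ on the countable set $\atom(\mu_n)$ that keeps the mass of the atomic part on a fiber below $\sup_x\mu_n(\{x\})$. I do not expect any of these to be a serious obstacle once the Hahn-type decomposition of each $\mu_n$ is set up.
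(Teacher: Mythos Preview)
Your proposal is correct and follows essentially the same approach as the paper: decompose each $\mu_n$ into its atomic and nonatomic parts, invoke Lemma~\ref{l:atomic} to make all the nonatomic pushforwards nonatomic past some countable ordinal, and enlarge that ordinal so that the projection is injective on the (countable) set of atoms. The only cosmetic differences are that the paper packages the decomposition as a convex combination $(1-t_n)\lambda_n+t_n\nu_n$ of probability measures and makes the projection injective on the whole union $\bigcup_n\atom(\lambda_n)$ at once (obtaining the equality $\sup_{y\in 2^\beta}\mu_n^\beta(\{y\})=\sup_{x\in 2^{\w_1}}\mu_n(\{x\})$ rather than your inequality, which is all that is needed).
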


\begin{proof} Write each measure $\mu_n$ as $\mu_n=(1-t_n)\lambda_n+t_n\nu_n$ for some $t_n\in[0,1]$ and measures $\lambda_n\in P_a(2^{\w_1})$ and $\nu_n\in P_{na}(2^{\w_1})$. By Lemma~\ref{l:atomic}, there exists a countable ordinal $\alpha\in\w_1$ such that for every $\beta\in[\alpha,\w_1)$ and every $n\in\w$ the measure $P\pi^{\w_1}_\beta(\nu_n)$ is nonatomic. Replacing $\alpha$ by a larger countable ordinal, if necessary, we can additionally assume that the projection $\pi^{\w_1}_\alpha$ is injective on the countable set $A=\bigcup_{n\in\w}\atom(\lambda_n)$.

We claim that the ordinal $\alpha$ has the required property. Indeed, fix any ordinal $\beta\in[\alpha,\w_1)$ and for every $n\in\w$ denote the measure $P\pi^{\w_1}_\beta(\mu_n)$ by $\mu_n^\beta$. It is clear that $\mu_n^\beta=(1-t_n)\lambda_n^\beta+t_n\nu_n^\beta$ where $\lambda_n^\beta=P\pi^{\w_1}_\beta(\lambda_n)$ and $\nu_n^\beta=P\pi^{\w_1}_\beta(\nu_n)$. The choice of the ordinal $\alpha$ ensures that the measure $\nu_n^\beta$ is nonatomic. The injectivity of the projection $\pi^{\w_1}_\alpha$ on $A$ implies the injectivity of the projection $\pi^{\w_1}_\beta$ on $A$. Then we can choose a function $f:2^\beta\to 2^{\w_1}$ such that $\pi^{\w_1}_\beta\circ f(y)=y$ for every $y\in 2^\beta$ and $f\circ \pi^{\w_1}_\beta(x)=x$ for every $x\in A$.

Observe that for every $y\in \pi^{\w_1}_\beta[A]$, we have
\[
\mu_n^\beta(\{y\})=(1-t_n)\lambda_n^\beta(\{y\})+t_n\nu_n^\beta(\{y\})=(1-t_n)\lambda_n(\{f(y)\})+0=\mu_n(\{f(y)\})
\]
and for every $y\in 2^\beta\setminus\pi^{\w_1}_\beta[A]$, we have
\[
\mu_n^\beta(\{y\})=(1-t_n)\lambda_n^\beta(\{y\})+t_n\nu_n^\beta(\{y\})=0+0.
\]
This implies
\[
\sup_{y\in 2^\beta}\mu_n^\beta(\{y\})=\sup_{x\in 2^{\w_1}}\mu_n(\{x\})
\]
 and hence
\[
\lim_{n\to\infty}\sup_{y\in 2^\beta}\mu_n^\beta(\{y\})=\lim_{n\to\infty}\sup_{x\in 2^{\w_1}}\mu_n(\{x\})=0,
\]
which means that the sequence $(P\pi^{\w_1}_\beta(\mu_n))_{n\in\w}= (\mu_n^\beta)_{n\in\w}$ is $c_0$-vanishing.
\end{proof}


\section{A key Lemma}


In the following lemma we use some notations related to the binary tree $2^{<\w}:=\bigcup_{n\in\w}2^n$. Here $2$ stands for the ordinal $2=\{0,1\}$. The unique element $\emptyset$ of $2^0$ is the root of the tree $2^{<\w}$. For an element $s\in 2^{<\w}$, we denote  by $|s|$ the unique number $n\in\w$ such that $s\in 2^n$. The number $|s|$ is called the {\em length} of $s$. For two elements $s,t\in 2^{<\w}$, we write $s\le t$ if $|s|\le|t|$ and $s=t{\restriction}_{|s|}$. Also we write $s<t$ if $s\le t$ and $s\ne t$. Let ${\uparrow}s:=\{t\in 2^{<\w}:s\le t\}$ be the upper set of $s$ in the tree $2^{<\w}$ (so ${\uparrow}s$ consists of all possible extensions of the function $s$). For $i\in 2$, let $s\hat{\;}i$ be the unique element of ${\uparrow}s$ such that $|s\hat{\;}i|=|s|+1$ and $s\hat{\;}i(|s|)=i$. So, $s\hat{\;}0$ and $s\hat{\;}1$ are immediate successors of $s$ in the tree $2^{<\w}$.

\begin{lemma}\label{l:atom}
Let $X$ be a zero-dimensional compact metrizable space, $(\mu_n)_{n\in\w}\in P(X)^\w$ be a $c_0$-vanishing sequence of probability measures that converge to a measure $\mu\in P(X)$. For every $z\in \atom(\mu)$ there exist a subsequence $(\mu_{n_k})_{k\in\w}$ of the sequence $(\mu_n)_{n\in\w}$ and a family $(X_s)_{s\in 2^{<\w}}$ of clopen subsets of $X\setminus\{z\}$ such $X_\emptyset=X\setminus\{z\}$ and for every $s\in 2^{<\w}$ the following conditions are satisfied:
\begin{enumerate}
\item[\textup{(i)}] $X_{s\hat{\;}0}\cup X_{s\hat{\;}1}=X_s$ and $X_{s\hat{\;}0}\cap X_{s\hat{\;}1}=\emptyset$;
\item[\textup{(ii)}] for any clopen neighborhood $U\subseteq X$ of $z$ and any $\e>0$ there exists $q\in\w$ such that
\[
\mu_{n_k}(U\cap X_s)\le \mu(U\setminus\{z\})+\tfrac{1}{2^{|s|}}+\e \quad \mbox{ for any $q\ge m$}.
\]
\end{enumerate}
\end{lemma}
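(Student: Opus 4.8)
The plan is as follows. Write $c:=\mu(\{z\})>0$. First note that $z$ is not isolated in $X$: otherwise $\{z\}$ would be clopen and $\mu_n(\{z\})\to\mu(\{z\})=c>0$, contradicting the $c_0$-vanishing of $(\mu_n)_{n\in\w}$. Using zero-dimensionality, the regularity of $\mu$ and the compactness of $X$, I fix a decreasing sequence $(U_j)_{j\in\w}$ of clopen neighbourhoods of $z$ with $\bigcap_{j\in\w}U_j=\{z\}$ and $\mu(U_j\SM\{z\})<2^{-j}$ for all $j$; by compactness $\{U_j\}_{j\in\w}$ is a neighbourhood base at $z$, so every clopen neighbourhood of $z$ contains some $U_j$. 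This lets me reduce condition (ii) to the case $U=U_j$: if $U\supseteq U_j$ then $(U\SM U_j)\cap X_s$ is a clopen subset of $X$ avoiding $z$, hence $\mu_{n_k}((U\SM U_j)\cap X_s)\to\mu((U\SM U_j)\cap X_s)\le\mu(U\SM\{z\})-\mu(U_j\SM\{z\})$, and adding the estimate for $U_j$ gives the one for $U$.

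The obstruction to overcome is that, although $\mu_n(\{z\})\to 0$, the atom of $\mu$ at $z$ persists in the limit as a ``bump'' of $\mu_n$-mass $\approx c$ smeared over an ever smaller neighbourhood of $z$; a \emph{fixed} partition of $X\SM\{z\}$ cannot cut this moving bump in half. I will first localize it. Fixing a sequence $\delta_k\downarrow 0$, I build recursively natural numbers $n_1<n_2<\cdots$ and $0<L_1<L_2<\cdots$ with $\mu_{n_k}(U_{L_k}\SM\{z\})>c-\delta_k$, $\mu_{n_k}(U_{L_{k+1}}\SM\{z\})<\delta_k$ and $\sup_{x\in X}\mu_{n_k}(\{x\})<\delta_k$. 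This is possible because for fixed $j$ one has $\mu_n(U_j)\to\mu(U_j)\ge c$, for fixed $n$ one has $\mu_n(U_j\SM\{z\})\to 0$ as $j\to\infty$, and $(\mu_n)_{n\in\w}$ is $c_0$-vanishing. Put $\mathrm{Bl}_k:=U_{L_k}\SM U_{L_{k+1}}$, a compact clopen subset of $X$; then $\mu_{n_k}(\mathrm{Bl}_k)>c-2\delta_k$ while $\mu_{n_k}\bigl(\bigsqcup_{k'>k}\mathrm{Bl}_{k'}\bigr)=\mu_{n_k}(U_{L_{k+1}}\SM\{z\})<\delta_k$, so the bump of $\mu_{n_k}$ is essentially confined to the block $\mathrm{Bl}_k$.

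Next, inside each $\mathrm{Bl}_k$ I build a full binary tree $(B_{k,t})_{t\in 2^{<\w}}$ of clopen subsets of $X$ with $B_{k,\emptyset}=\mathrm{Bl}_k$ and $B_{k,t\hat{\;}0}\sqcup B_{k,t\hat{\;}1}=B_{k,t}$, each split ``greedily balanced'' for $\mu_{n_k}$: partitioning a clopen set into clopen pieces each of $\mu_{n_k}$-mass below a small $\theta\le\delta_k$ apart from finitely many singletons (all of $\mu_{n_k}$-mass $<\delta_k$, since $\sup_x\mu_{n_k}(\{x\})<\delta_k$), a greedy two-colouring of this finite partition yields $|\mu_{n_k}(B_{k,t\hat{\;}0})-\mu_{n_k}(B_{k,t\hat{\;}1})|<\delta_k$. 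A trivial induction then gives $\mu_{n_k}(B_{k,t})<2^{-|t|}\mu_{n_k}(\mathrm{Bl}_k)+\delta_k\le 2^{-|t|}+\delta_k$ for all $k,t$. Fix also an arbitrary binary tree $(D_t)_{t\in 2^{<\w}}$ of clopen partitions of the compact clopen set $X\SM U_{L_1}$, and define
\[
X_t:=D_t\cup\bigcup_{k\ge 1}B_{k,t}\qquad(t\in 2^{<\w}).
\]
Since $X\SM U_{L_1}$ and the sets $\mathrm{Bl}_k$ are pairwise disjoint clopen sets whose union is $X\SM\{z\}$, the family $(X_t)_{t\in 2^{<\w}}$ satisfies $X_\emptyset=X\SM\{z\}$ and (i), and each $X_t$, being a union of clopen-in-$X$ sets that accumulate only at $z$, is clopen in $X\SM\{z\}$.

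Finally, to verify (ii) it suffices (by the first paragraph) to fix $j\in\w$, $s\in 2^m$ and $\e>0$ and show $\mu_{n_k}(U_j\cap X_s)<\mu(U_j\SM\{z\})+2^{-m}+\e$ for all large $k$. For $k$ so large that $L_k\ge j$, $\delta_k<\e/4$ and $\mu_{n_k}(U_j)<\mu(U_j)+\e/4$, decompose $U_j\cap X_s$ as the disjoint union of $X_s\cap(U_j\SM U_{L_k})$, $X_s\cap\mathrm{Bl}_k=B_{k,s}$ and $X_s\cap(U_{L_{k+1}}\SM\{z\})$. The $\mu_{n_k}$-mass of the first set is at most $\mu_{n_k}(U_j)-\mu_{n_k}(U_{L_k})<(\mu(U_j)+\e/4)-(c-\delta_k)<\mu(U_j\SM\{z\})+\e/2$; that of the second is $\mu_{n_k}(B_{k,s})<2^{-m}+\delta_k<2^{-m}+\e/4$; that of the third is at most $\mu_{n_k}(U_{L_{k+1}}\SM\{z\})<\delta_k<\e/4$. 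Adding these proves the estimate. The real work sits in the middle two paragraphs — taming the travelling bump by forcing it into a single block along a subsequence, and then using $c_0$-vanishing to split that block into $2^{m}$ almost equal clopen parts; once this is arranged, the bound falls out because outside the block $\mu_{n_k}(U_j)$ already converges to $\mu(U_j)=c+\mu(U_j\SM\{z\})$.
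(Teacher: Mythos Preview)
Your argument is correct and follows the same overall strategy as the paper: pass to a subsequence along which the mass $\approx\mu(\{z\})$ near $z$ is trapped in pairwise disjoint annular pieces, split each piece into nearly $\mu_{n_k}$-balanced parts, and assemble the pieces across all $k$ into the tree $(X_s)_s$. The implementation differs in two places that make your version more direct. First, to cut a set with small maximal atom into balanced clopen halves, the paper embeds the compact set $S_k$ into $\mathbb{R}$ and slices at quantiles of $\mu_{n_k}$; you instead refine into a finite clopen partition of pieces of mass $<\delta_k$ and two-colour greedily, which is more elementary and yields the same $O(\delta_k)$ imbalance. Second, the paper works with compact (not clopen) sets $S_k\subseteq U_{m_k}\setminus\{z\}$ obtained by regularity, splits $S_k$ only into $2^k$ parts at level $k$, collects the resulting closed sets $T_s=\bigcup_{k\ge|s|}\bigcup_{t\restriction_{|s|}=s}S_{k,t}$, and then invokes strong zero-dimensionality to separate $T_{s\hat{\;}0}$ and $T_{s\hat{\;}1}$ by clopen sets $X_{s\hat{\;}0},X_{s\hat{\;}1}$; you build a full clopen binary tree inside each clopen block $\mathrm{Bl}_k$ and set $X_t=D_t\cup\bigcup_k B_{k,t}$ directly, so no separation step is needed. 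The final $\mu_{n_k}(U\cap X_s)$ estimate is essentially the same computation in both proofs.
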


\begin{proof}
Fix a neighborhood base $\{U_n\}_{n\in\w}$ at the point $z$ consisting of clopen subsets of $X$ such that $U_{n+1}\subseteq U_n$ for every $n\in\w$.

We shall inductively construct  increasing number sequences $(n_k)_{k\in\w}$, $(m_k)_{k\in\w}$ and a sequence $(S_k)_{k\in\w}$ of compact subsets of $X\setminus\{z\}$ such that for every $k\in\NN$ the following inductive conditions are satisfied:
\begin{itemize}
\item[$(a_k)$] $S_{k-1}\cap U_{m_{k}}=\emptyset$;
\item[$(b_k)$]
 $\sup_{n\ge n_{k}}\min\big\{|\mu_n(U_{m_{k}})-\mu(U_{m_{k}})|,\sup_{x\in X}\mu_n(\{x\})\big\}<\frac{1}{6^{k}}\mu(\{z\})\le\tfrac{1}{6^{k}}$;
\item[$(c_k)$] $S_{k}\subseteq U_{m_{k}}$ and $\mu_{n_{k}}(S_{k})>\mu_{n_{k}}(U_{m_{k}}\setminus\{z\})-\frac{1}{6^{k}}\mu(\{z\})$.
\end{itemize}

We start the inductive construction letting $m_0=0$ and $S_0=\emptyset$ and choosing $n_0\in\w$ such that the condition $(b_0)$ is satisfied. Assume that for some $k\in\NN$, we have constructed  a compact subset $S_{k-1}$ of $X\setminus\{z\}$ and numbers $n_{k-1},m_{k-1}$. As $S_{k-1}$ is a compact subset of $X\SM\{z\}$, there is a number $m_k>m_{k-1}$ such that $U_{m_k}\cap S_{k-1}=\emptyset$. Since $\lim_{n\in\w}\mu_n=\mu$ and $\lim_{n\in\w}\sup_{x\in X}\mu_n(\{x\})=0$, there exists a number $n_{k}>n_{k-1}$ satisfying the inductive condition $(b_k)$. Using the regularity of the measure $\mu_{n_{k}}$, choose a compact set $S_{k}\subseteq U_{m_{k}}\setminus\{z\}$ satisfying the condition $(c_k)$.  This completes the inductive step.

Observe that for every $k\in\NN$, the inductive conditions $(b_k)$ and $(c_k)$ imply
\begin{equation}\label{eq:new1}
\begin{aligned}
\mu_{n_{k}}(S_k)&>\mu_{n_k}(U_{m_{k}}\setminus\{z\})-\tfrac{1}{6^{k}}\mu(\{z\})=\mu_{n_k}(U_{m_{k}})-\mu_{n_k}(\{z\})-\tfrac{1}{6^{k}}\mu(\{z\})\\
&>\big(\mu(U_{m_k})-\tfrac{1}{6^{k}}\mu(\{z\})\big)-\tfrac{1}{6^{k}}\mu(\{z\})-\tfrac{1}{6^{k}}\mu(\{z\})\ge \big(1-\tfrac{3}{6^{k}}\big)\mu(\{z\}).
\end{aligned}
\end{equation}

\begin{claim}\label{cl1}
For every $k\in\IN$, the compact set $S_{k}$ can be represented as a union $\bigcup_{t\in 2^k}S_{k,t}$ of $2^k$ pairwise disjoint  compact sets such that
\begin{equation} \label{equ:pl-JNP-3}
\max_{t\in 2^k}\big|\mu_{n_k}(S_{n,t})-\tfrac{1}{2^k}\cdot\mu_{n_k}(S_k)\big|< \tfrac{4}{6^k}.
\end{equation}
\end{claim}

\begin{proof}
Identify $2^k$ with the finite ordinal $|2^k|=\{0,\dots,|2^k|-1\}$ via the bijective map $2^k\to \{0,\dots,|2^k|-1\}$, $t\mapsto \sum_{i=0}^{k-1} t(i)2^i$.
Since the compact space $S_k$ is zero-dimensional, it is homeomorphic to a subspace of the real line. This allows us to identify $S_k$ with a compact subset of the real line. For a real number $r$, let ${\downarrow}r=S_k\cap (-\infty,r]\subseteq S_k\subseteq\IR$. For every $t\in 2^k$, let
\[
s_t=\inf\{s\in S_k:\mu_{n_k}({\downarrow}s)\ge \tfrac{t+1}{2^k}\mu_{n_k}(S_k)\}.
\]
The regularity of the measure $\mu_{n_k}$ ensures that
\[
\begin{aligned}
\frac{t+1}{2^k}\mu_{n_k}(S_k) & \le\mu_{n_k}({\downarrow}s_t)\le \frac{t+1}{2^k}\mu_{n_k}(S_k)+\mu_{n_k}(\{s_t\})\stackrel{(b_k)}{<}
\frac{t+1}{2^k}\mu_{n_k}(S_k)+\frac1{6^{k}}\mu(\{z\})\\
&\stackrel{(\ref{eq:new1})}{<}\frac {t+1}{2^k}\mu_{n_k}(S_k)+\frac{\mu_{n_k}(S_{k})}{6^{k}(1-3/6^{k})}<\frac{t+2}{2^k}\mu_{n_k}(S_k),
\end{aligned}
\]
which implies $s_{t}<s_{t+1}$ if $t<2^k$. Let $r_{-1},r_{2^k-1}\in\IR\setminus S_k$ be any real numbers such that ${\downarrow}r_{-1}=\emptyset$ and ${\downarrow}r_{2^k-1}=S_k$. Since the space $S_k\subseteq\IR$ is zero-dimensional, for every $t\in 2^k-1$ we can choose a real number $r_t\in \IR\setminus S_k$ such that $s_t< r_t<s_{t+1}$ and $\mu_{n_k}({\downarrow}r_t)<\mu_{n_k}({\downarrow}s_t)+\tfrac{1}{6^k}$. Then
\[
|\mu_{n_k}({\downarrow}r_t)-\tfrac{t+1}{2^k}\mu_{n_k}(S_k)|\le |\mu_{n_k}({\downarrow}r_t)-\mu_{n_k}({\downarrow}s_t)|+|\mu_{n_k}({\downarrow}s_t)-\tfrac{t+1}{2^k}\mu_{n_k}(S_k)|<
\tfrac{1}{6^k}+\tfrac{1}{6^k}=\tfrac{2}{6^k}.
\]

It follows from $r_t\in\IR\setminus S_k$ that for every $t\in 2^k$ the subspace $S_{k,t}=S_k\cap({\downarrow}r_{t})\setminus ({\downarrow}r_{t-1})$ is compact and $S_k=\bigcup_{t\in 2^k}S_{k,t}$. It is clear that the family $(S_{k,t})_{t\in 2^k}$ consists of pairwise disjoint sets.

Finally, observe that
\[
\begin{aligned}
|\mu_{n_k}(S_{k,t}) &-\tfrac1{2^k}\mu_{n_k}(S_{k})|  = |\mu_{n_k}({\downarrow}r_t)-\mu_{n_k}({\downarrow}r_{t-1})-\tfrac{t+1}{2^k}\mu_{n_k}(S_k)+\tfrac{t}{2^k}\mu_{n_k}(S_k)|\\
& \leq|\mu_{n_k}({\downarrow}r_t)-\tfrac{t+1}{2^k}\mu_{n_k}(S_k)|+|\mu_{n_k}({\downarrow}r_{t-1})-\tfrac{t}{2^k}\mu_{n_k}(S_k)|
 <\tfrac2{6^k}+\tfrac{2}{6^k}=\tfrac{4}{6^k}.
\end{aligned}
\]
 \end{proof}
Now we continue the proof of the lemma. For every $s\in 2^{<\w}$, consider the set
\[
T_s=\bigcup_{k\ge |s|} \; \bigcup\{S_{k,t}:t\in 2^k,\;t{\restriction}_{|s|}=s\},
\]
where $S_{0,0}=S_0=\emptyset$.
Taking into account that the subsets $S_{k,t}$ of $U_{n_{k-1}}$ are compact and the sequence $\{U_k\}_{k\in\w}$ is a neighborhood base at $z$, we conclude that the  set $T_s$ is closed in $X\setminus\{z\}$.
The definition of the sets $T_s$ implies $T_{s\hat{\;}0}\cup T_{s\hat{\;}1}\subseteq T_s$. To show that $T_{s\hat{\;}0}\cap T_{s\hat{\;}1}=\emptyset$ for any $s\in 2^{<\w}$, it suffices to note that  the sequence $(S_k)_{k\in\w}$ is disjoint by the inductive conditions $(a_k)$ and $(c_k)$ and, for every $k\in\w$,  the family $\{S_{k,t}\}_{t\in 2^k}$ is disjoint by the construction.
\smallskip

Now, by induction on the binary tree $2^{<\w}$,  we shall find a family $(X_s)_{s\in2^{<\w}}$ of clopen subsets of $X\setminus\{z\}$ such that  $X_\emptyset=X\setminus\{z\}$ (the base of induction) and
\[
T_s\subseteq X_s,\;\;X_{s\hat{\;}0}\cup X_{s\hat{\;}1}=X_s,\;\;X_{s\hat{\;}0}\cap X_{s\hat{\;}1}=\emptyset \quad \mbox{ for every $s\in 2^{<\w}$}.
\]

Assume that for some $s\in 2^{<\w}$ the set $X_s$  has been constructed. Since $X$ is a zero-dimensional compact metrizable space, the space $X_s$ is zero-dimensional and being Lindel\"{o}f it is  strongly zero-dimensional in the sense that any disjoint closed sets in $X_s$ can be separated by clopen neighborhoods, see \cite[Theorem~6.2.7]{Eng}.  Since $T_{s\hat{\;}0}$ and $T_{s\hat{\;}1}$ are two closed disjoint sets in $T_s\subseteq X_s$, there are clopen sets $X_{s\hat{\;}0}$ and $X_{s\hat{\;}1}$ such that $T_{s\hat{\;}0}\subseteq X_{s\hat{\;}0}$, $T_{s\hat{\;}1}\subseteq X_{s\hat{\;}1}$, $X_{s\hat{\;}0}\cup X_{s\hat{\;}1}=X_s$, and  $X_{s\hat{\;}0}\cap X_{s\hat{\;}1}=\emptyset$. This completes the inductive step.
\smallskip

By the inductive construction, the family $(X_s)_{s\in 2^{<\w}}$ satisfies the condition (i) of the lemma. To verify the condition (ii), fix $s\in 2^{<\w}$, $\e>0$ and a clopen neighborhood $U$ of $z$ in $X$. Find $q\ge|s|$ such that
\begin{equation} \label{equ:pl-JNP-4}
U_{m_{k-1}}\subseteq U,\quad \tfrac{3}{6^k}+\tfrac{4}{3^k}<\tfrac{\e}{2}\mbox{ \ and \ }|\mu_{n_k}(U)-\mu(U)|<\tfrac{\e}{2} \quad \mbox{ for every $k\ge q$}.
\end{equation}
We claim that the number $m$ has the  property required in (ii). Indeed, if $t\in {\uparrow}s$, then $S_{k,t}\subseteq T_s \subseteq X_s$, and if $t\not\in {\uparrow}s$, then the disjointness of $T_{t{\restriction}s}$ and $T_s$ and the construction of $X_s$ imply $S_{k,t} \cap X_s=\emptyset$ and therefore
\begin{equation} \label{equ:pl-JNP-41}
S_k \cap X_s =\bigcup_{t\in 2^k} (S_{k,t}\cap X_s) =\bigcup_{t\in 2^k\cap {\uparrow}s}(S_{k,t}\cap X_s)=\bigcup_{t\in 2^k\cap {\uparrow}s}S_{k,t}.
\end{equation}

Note also that by $(c_k)$ and (\ref{equ:pl-JNP-4}), $S_k \subseteq U_{m_k}\subseteq U$. Now, for any $k\ge q$, we have
\[
\begin{aligned}
\mu_{n_k}(U\cap X_s) & \le\mu_{n_k}(U\setminus S_k)+\mu_{n_k}(S_k\cap X_s)\\
& \stackrel{(\ref{equ:pl-JNP-41})}{=} \mu_{n_k}(U\setminus S_k)+\sum_{t\in 2^k\cap{\uparrow}s}\mu_{n_k}(S_{k,t}) \\
&=\mu_{n_k}(U\setminus S_k)+\mu_{n_k}(S_k)-\sum_{t\in 2^k \,\setminus\,{\uparrow}s}\mu_{n_k}(S_{k,t}) \\
& \stackrel{(\ref{equ:pl-JNP-3})}{<} \mu_{n_k}(U)-(2^k-2^{k-|s|})\big(\tfrac{1}{2^k}\mu_{n_k}(S_k)-\tfrac{4}{6^{k}}\big)\\
&\stackrel{(\ref{eq:new1})}{<} \mu_{n_k}(U)-\big(1-\tfrac{1}{2^{|s|}}\big)\big(\mu(U_{m_{k}})-\tfrac{3}{6^k}-\tfrac{4}{3^k}\big)\\
& \stackrel{(\ref{equ:pl-JNP-4})}{<}
\big(\mu(U)+\tfrac{\e}{2}\big)-\mu(U_{m_k})+\tfrac{1}{2^{|s|}}+\tfrac{3}{6^k}+\tfrac{4}{3^{k}} \\
&\stackrel{(\ref{equ:pl-JNP-4})}{<}\mu(U\setminus U_{m_k})+\e+\tfrac{1}{2^{|s|}}\le \mu(U\setminus\{z\})+\tfrac{1}{2^{|s|}}+\e,
\end{aligned}
\]
as desired.
\end{proof}


\section{An implication of $\diamondsuit$} \label{sec:enumeration}


In this section we apply Jensen's diamond principle $\diamondsuit$ to producing a special enumeration of the set $(P(2^{\w_1}))^\w$ by countable ordinals.

Jensen's diamond principle is the following statement (see \cite{Jensen}):
\begin{itemize}
\item[$(\diamondsuit)$] {\em there exists a transfinite sequence $(x_\alpha)_{\alpha\in\w_1}\in\prod_{\alpha\in\w_1}2^\alpha$ such that for every $x\in 2^{\omega_1}$, the set $\{\alpha\in\w_1:x{\restriction}_\alpha=x_\alpha\}$ is stationary in $\w_1$.}
\end{itemize}
A set $S\subseteq\w_1$ is {\em stationary} if $S$ has nonempty intersection with any closed unbounded subset of $\w_1$. By \cite{Jensen}, $\diamondsuit$ implies CH and follows from the G\"odel Constructibility Axiom $V=L$ (which is consistent with ZFC).

We shall apply $\diamondsuit$ to produce a nice enumerations of elements of the limit spaces of  subcontinuous $\w_1$-spectra.

Let $\lambda$ be any ordinal. A {\em $\lambda$-spectrum} if an indexed family $\Sigma=(X_\alpha,\pi_\alpha^\beta:\alpha\le \beta<\lambda)$ consisting of sets $X_\alpha$ and functions $\pi_\alpha^\beta:X_\beta\to X_\alpha$  such that $\pi_\alpha^\gamma=\pi^\gamma_\beta\circ\pi^\beta_\alpha$ for any ordinals $\alpha\le\beta\le\gamma$ in $\lambda$. 
The spectrum $\Sigma$ is defined to be {\em subcontinuous} if for every limit ordinal $\beta<\lambda$ and any distinct elements $x,x'\in X_\beta$ there exists an ordinal $\alpha<\beta$ such that $\pi^\beta_\alpha(x)\ne\pi^\beta_\alpha(x')$.

\begin{proposition}\label{p:diamond}
Let $\Sigma=(X_\alpha,\pi^\beta_\alpha:\alpha\le\beta\le\w_1)$ be a subcontinuous $(\w_1+1)$-spectrum such that for every $\alpha<\w_1$ the set $X_\alpha$ has cardinality $|X_\alpha|\le\mathfrak c$. Under $\diamondsuit$ there exists a transfinite sequence $(x_\alpha)_{\alpha\in\w_1}\in \prod_{\alpha\in\w_1}X_\alpha$ such that for every $x\in X_{\w_1}$, the set $\{\alpha\in\w_1:x_\alpha= \pi^{\w_1}_\alpha(x)\}$ is stationary in $\w_1$.
\end{proposition}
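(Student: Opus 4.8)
The plan is to pull the diamond sequence on $2^{\w_1}$ back along a coherent family of injections of the spaces $X_\alpha$ into countable cubes. Since $\diamondsuit$ implies CH, we have $\mathfrak c=\w_1$, so $|X_\alpha|\le\w_1$ for every $\alpha<\w_1$; clearly we may assume all $X_\alpha$ are nonempty. Fix a diamond sequence $(y_\alpha)_{\alpha\in\w_1}\in\prod_{\alpha\in\w_1}2^\alpha$ as in $\diamondsuit$, and fix the strictly increasing continuous sequence of countable ordinals given by $\delta_0=0$, $\delta_{\alpha+1}=\delta_\alpha+\w$, and $\delta_\lambda=\sup_{\alpha<\lambda}\delta_\alpha$ for limit $\lambda$. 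Since $\delta_\alpha\ge\alpha$, we get $\sup_{\alpha<\w_1}\delta_\alpha=\w_1$, and $C:=\{\alpha<\w_1:\delta_\alpha=\alpha\}$ is a club in $\w_1$, being the fixed-point set of a normal function.

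The heart of the argument is a transfinite recursion producing injections $\Phi_\alpha:X_\alpha\to 2^{\delta_\alpha}$, $\alpha<\w_1$, that are \emph{coherent} in the sense that $\Phi_\alpha(\pi^\beta_\alpha(x))=\Phi_\beta(x){\restriction}_{\delta_\alpha}$ for all $\alpha\le\beta<\w_1$ and $x\in X_\beta$. At a successor step, given $\Phi_\alpha$, for each $p\in X_\alpha$ the fiber $(\pi^{\alpha+1}_\alpha)^{-1}(p)\subseteq X_{\alpha+1}$ has cardinality at most $\w_1=\mathfrak c=|2^{[\delta_\alpha,\delta_{\alpha+1})}|$ and hence embeds into $2^{[\delta_\alpha,\delta_{\alpha+1})}$; gluing these embeddings onto the fiberwise constant map $\Phi_\alpha\circ\pi^{\alpha+1}_\alpha$ yields the required injection $\Phi_{\alpha+1}$. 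At a limit step $\lambda<\w_1$, coherence forces $\Phi_\lambda(x)$ to be the unique element of $2^{\delta_\lambda}$ restricting to $\Phi_\alpha(\pi^\lambda_\alpha(x))$ on each $\delta_\alpha$, $\alpha<\lambda$; this is well defined since the $\delta_\alpha$ are cofinal in $\delta_\lambda$, and it is injective because $\Phi_\lambda(x)=\Phi_\lambda(x')$ forces $\pi^\lambda_\alpha(x)=\pi^\lambda_\alpha(x')$ for all $\alpha<\lambda$, whence $x=x'$ by the subcontinuity of $\Sigma$ at $\lambda$.

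With the $\Phi_\alpha$ in hand, I would define $\Phi:X_{\w_1}\to 2^{\w_1}$ by letting $\Phi(x)$ be the unique element of $2^{\w_1}$ with $\Phi(x){\restriction}_{\delta_\alpha}=\Phi_\alpha(\pi^{\w_1}_\alpha(x))$ for all $\alpha<\w_1$; this is well defined because $\sup_\alpha\delta_\alpha=\w_1$, and injective by subcontinuity of $\Sigma$ at $\w_1$, exactly as at limit steps. Now set $x_\alpha=\Phi_\alpha^{-1}(y_\alpha)$ whenever $\alpha\in C$ and $y_\alpha\in\Phi_\alpha[X_\alpha]$, and let $x_\alpha$ be an arbitrary point of $X_\alpha$ otherwise, obtaining $(x_\alpha)_{\alpha\in\w_1}\in\prod_{\alpha\in\w_1}X_\alpha$. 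Given $x\in X_{\w_1}$, put $y=\Phi(x)$; by $\diamondsuit$ the set $S=\{\alpha<\w_1:y{\restriction}_\alpha=y_\alpha\}$ is stationary, hence so is $S\cap C$, and for every $\alpha\in S\cap C$ we have $\delta_\alpha=\alpha$, so $y_\alpha=\Phi(x){\restriction}_{\delta_\alpha}=\Phi_\alpha(\pi^{\w_1}_\alpha(x))\in\Phi_\alpha[X_\alpha]$ and therefore $x_\alpha=\Phi_\alpha^{-1}(y_\alpha)=\pi^{\w_1}_\alpha(x)$. Thus $\{\alpha\in\w_1:x_\alpha=\pi^{\w_1}_\alpha(x)\}\supseteq S\cap C$ is stationary.

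I expect the main obstacle to be the recursive construction of the coherent family $(\Phi_\alpha)$: one must arrange at each successor step that a fresh block of $\aleph_0$ many coordinates suffices to separate all the fibers — this is exactly where CH (via $\diamondsuit$) and the bound $|X_\alpha|\le\mathfrak c$ are used — while propagating injectivity through limit ordinals, which is precisely what subcontinuity of $\Sigma$ provides. The remaining parts are routine: standard facts about normal functions and the stability of stationary sets under intersection with clubs.
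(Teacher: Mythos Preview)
Your argument is correct and follows essentially the same strategy as the paper: coherently embed the spectrum into Cantor cubes and pull back the diamond sequence, then intersect the resulting stationary set with a club on which the embedding behaves well. The paper's implementation is slicker, however: rather than building the coherent family $(\Phi_\alpha)$ by transfinite recursion along the fibers, it simply fixes \emph{arbitrary} injections $f_\alpha:X_\alpha\to 2^\w$ (no coherence required) and sets $g_\beta(x)=(f_\alpha(\pi^\beta_\alpha(x)))_{\alpha<\beta}\in(2^\w)^\beta$. Coherence with restriction is then automatic, and injectivity of $g_\beta$ at limit $\beta$ follows from subcontinuity exactly as in your limit step. This sidesteps the recursion entirely, and in particular avoids the small glitch in your base case: with $\delta_0=0$ your map $\Phi_0:X_0\to 2^0$ cannot be injective when $|X_0|>1$, and this non-injectivity would propagate through the successor steps as you have written them. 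The obvious fix is to take $\delta_0=\w$ (or treat $\alpha=0$ separately); alternatively one notes, as the paper effectively does, that only limit ordinals matter.

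Two further remarks: your appeal to CH is unnecessary---the hypothesis $|X_\alpha|\le\mathfrak c=|2^\w|$ already lets you inject each fiber into a block of $\w$ many coordinates---and what you call the ``main obstacle'' (the recursion) is precisely the part the paper bypasses.
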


\begin{proof} Assuming $\diamondsuit$, we obtain a transfinite sequence $(y_\alpha)_{\alpha\in\w_1}\in\prod_{\alpha\in\w_1}2^\alpha$ such that for every $y\in 2^{\omega_1}$, the set $\{\alpha\in\w_1:y_\alpha=y{\restriction}_\alpha\}$ is stationary in $\w_1$.

Identifying $\w_1$ with the ordinal $\w\cdot\w_1$, we conclude that $\diamondsuit$ implies the existence of a transfinite sequence $(z_\alpha)_{\alpha\in\w_1}\in\prod_{\alpha\in \w_1}(2^\w)^\alpha$ such that for any $z\in (2^\w)^{\w_1}$ the set $\{\alpha\in\w_1:z_\alpha=z{\restriction}_\alpha\}$ is stationary in $\w_1$.

For every $\alpha\in\w_1$, the set $X_\alpha$ has cardinality $|X_\alpha|\le|2^\w|$ and hence admits an injective function $f_\alpha:X_\alpha\to 2^\w$. For every $\beta\in\w_1$, consider the  function
$$g_{\beta}:X_\beta\to (2^\w)^{\beta},\quad g_{\beta}:x\mapsto (f_\alpha\circ\pi^\beta_\alpha)_{\alpha\in\beta}.$$

\begin{claim}
For every limit ordinal $\beta\in\w_1$ the function $g_\beta:X_\beta\to (2^\w)^\beta$ is injective.
\end{claim}

\begin{proof}
Given any distinct elements $x,x'\in X_\beta$, use the subcontinuity of the spectrum $\Sigma$ and find an ordinal $\alpha<\beta$ such that $\pi^\beta_\alpha(x)\ne \pi^\beta_\alpha(x')$. The injectivity of the function $f_\alpha:X_\alpha\to 2^\w$ implies that $f_\alpha(\pi^{\w_1}_\alpha(x))\ne f_\alpha(\pi^{\w_1}_\alpha(x'))$. Let $\pr^\beta_\alpha:(2^\w)^\beta\to 2^\w$, $\pr^\beta_\alpha:x\mapsto x(\alpha)$, be the projection of $(2^\w)^\beta$ onto the $\alpha$-th coordinate. It follows from
$$\pr^\beta_\alpha\circ g_\beta(x)=f_\alpha\circ\pi^\beta_\alpha(x)\ne f_\alpha\circ\pi^\beta_\alpha(x')=\pr^\beta_\alpha\circ g_\beta(x')$$ that $g_\beta(x)\ne g_\beta(x')$, which means that the function $g_\beta:X_\beta\to (2^\w)^\beta$ is injective.
\end{proof}

Let $(x_\alpha)_{\alpha\in\w_1}\in\prod_{\alpha\in\w_1}X_\alpha$ be any sequence such that for every $\alpha\in\w_1$ with $z_\alpha\in g_\alpha[X_\alpha]$ we have $g_\alpha(x_\alpha)=z_\alpha$. We claim that the transfinite sequence $(x_\alpha)_{\alpha\in\w_1}$ has the required property.

Fix any $x\in X_{\w_1}$ and consider the element $z=g_{\w_1}(x)\in (2^\w)^{\w_1}$. The choice of the transfinite sequence $(z_\alpha)_{\alpha\in\w_1}$ ensures that the set $S=\{\alpha\in\w_1:z{\restriction}_\alpha=z_\alpha\}$ is stationary in $\w_1$.

To see that the set $\{\alpha\in\w_1:\pi^{\w_1}_\alpha(x)=x_\alpha\}$ is stationary, it suffices to show that it contains the stationary set  $S\cap\w'_1$, where $\w_1'$ is the closed unbounded set of all limit countable ordinals.

Given any limit ordinal $\beta\in S$, observe that $z_\beta=z{\restriction}_\beta=g_{\w_1}(x){\restriction}_\beta=(f_\alpha\circ\pi^{\w_1}_\alpha(x))_{\alpha\in\beta}=g_\beta(\pi^{\w_1}_\beta(x))\in g_\beta[X_\beta]$ and hence $z_\beta=g_\beta(x_\beta)$. On the other hand, $$z_\beta=z{\restriction}_\beta=g_{\w_1}(x){\restriction}_\beta=g_\beta(\pi^{\w_1}_\beta(x)).$$Since $g_\beta(x_\beta)=z_\beta=g_\beta(\pi^{\w_1}_\beta(x))$, the injectivity of $g_\beta$ implies $x_\beta=\pi^{\w_1}_\beta(x)$. Therefore, $S\cap\w_1'\subseteq\{\alpha\in\w_1:\pi^{\w_1}_\alpha(x)=x_\alpha\}$ and the set $\{\alpha\in\w_1:\pi^{\w_1}_\alpha(x)=x_\alpha\}$   is stationary.
\end{proof}

Now we apply Proposition~\ref{p:diamond} to subcontinuous $\lambda$-spectra in the category $\Comp$ of compact Hausdorff spaces and their continuous maps. A {\em $\lambda$-spectrum} in the category $\Comp$ is a $\lambda$-spectrum $(X_\alpha,\pi^\beta_\alpha:\alpha\le\beta<\lambda)$ consisting of compact Hausdorff spaces $X_\alpha$ and continuous bounding maps $\pi^\beta_\alpha$. An example of such spectrum is the $(\w+1)$-spectrum $(2^\alpha,\pi^\beta_\alpha:\alpha\le\beta\le\w_1)$, where the cubes $2^\alpha$ are endowed with the Tychonoff product topology and the projection maps $\pi_\alpha^\beta:2^\beta\to 2^\alpha$, $\pi_\alpha^\beta:x\mapsto x{\restriction}_\alpha$, are continuous.

A functor $F:\Comp\to\Comp$ in the category $\Comp$ is defined to be {\em subcontinuous} if for any ordinal $\lambda$ and any subcontinuous $\lambda$-spectrum $(X_\alpha,\pi^\beta_\alpha:\alpha\le\beta<\lambda)$ in the category $\Comp$,  the spectrum $(FX_\alpha,F\pi^\beta_\alpha:\alpha\le\beta<\lambda)$ is subcontinuous. By \cite[2.3.3]{TZ}, the functor of probability measures $P:\Comp\to\Comp$ is subcontinuous and so is the functor $(\cdot)^\w:\Comp\to\Comp$ of taking the countable power. Then the composition $P^\w:\Comp\to\Comp$ of the functors $P$ and $(\cdot)^\w$ is a subcontinuous functor, too. This functor assigns to each compact Hausdorff space $X$ the countable power $P^\w(X)$ of the space $P(X)$. To any continuous function $f:X\to Y$ between compact Hausdorff space the function $P^\w$ assigns the continuous function $P^\w f:P^\w(X)\to P^\w(Y)$, $P^\w:(\mu_n)_{n\in\w}\mapsto (Pf(\mu_n))_{n\in\w}$.

\begin{corollary}\label{c:diamond}
Under $\diamondsuit$ there exists a transfinite sequence $(\mu_\alpha)_{\alpha\in\w_1}\in\prod_{\alpha\in\w_1}P^\w(2^\alpha)$ such that for every $\mu\in P^\w(2^{\w_1})$, the set $\{\alpha\in\w_1:\mu_\alpha= P^\w\pi^{\w_1}_\alpha(\mu)\}$ is stationary in $\w_1$.
\end{corollary}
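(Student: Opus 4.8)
The plan is to deduce Corollary~\ref{c:diamond} as a direct application of Proposition~\ref{p:diamond} to the specific $(\w_1+1)$-spectrum obtained by applying the functor $P^\w$ to the standard spectrum of Cantor cubes. First I would set $X_\alpha := P^\w(2^\alpha)$ for $\alpha \le \w_1$ and $\pi^\beta_\alpha := P^\w\pi^\beta_\alpha$ (the maps induced on $P^\w$ by the coordinate projections $2^\beta \to 2^\alpha$, $x \mapsto x{\restriction}_\alpha$), and check that $\Sigma := (X_\alpha, \pi^\beta_\alpha : \alpha \le \beta \le \w_1)$ is a genuine $(\w_1+1)$-spectrum: the composition law $\pi^\gamma_\alpha = \pi^\beta_\alpha \circ \pi^\gamma_\beta$ holds because $P^\w$ is a functor and the coordinate projections compose in the obvious way.

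Next I would verify the two hypotheses of Proposition~\ref{p:diamond}. The cardinality bound $|X_\alpha| \le \mathfrak c$ for $\alpha < \w_1$ holds because $2^\alpha$ is a compact metrizable space for countable $\alpha$, so $P(2^\alpha)$ is compact metrizable, hence $|P(2^\alpha)| \le \mathfrak c$, and therefore $|P^\w(2^\alpha)| = |P(2^\alpha)|^\w \le \mathfrak c^\w = \mathfrak c$ (using $\diamondsuit \Rightarrow$ CH, or just $\mathfrak c^\w = \mathfrak c$ outright). Subcontinuity of $\Sigma$ is where I would invoke the results quoted just before the corollary: the underlying spectrum $(2^\alpha, \pi^\beta_\alpha)$ is subcontinuous (for a limit $\beta$, distinct $x, x' \in 2^\beta$ already differ at some coordinate $\gamma < \beta$, so $\pi^\beta_{\gamma+1}$ separates them), and by \cite[2.3.3]{TZ} the functor $P$ is subcontinuous and so is $(\cdot)^\w$, hence their composite $P^\w$ is subcontinuous; applying $P^\w$ to a subcontinuous spectrum yields a subcontinuous spectrum, so $\Sigma$ is subcontinuous.

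With both hypotheses verified, Proposition~\ref{p:diamond} applied to $\Sigma$ yields a transfinite sequence $(\mu_\alpha)_{\alpha \in \w_1} \in \prod_{\alpha \in \w_1} X_\alpha = \prod_{\alpha \in \w_1} P^\w(2^\alpha)$ such that for every $\mu \in X_{\w_1} = P^\w(2^{\w_1})$ the set $\{\alpha \in \w_1 : \mu_\alpha = \pi^{\w_1}_\alpha(\mu)\} = \{\alpha \in \w_1 : \mu_\alpha = P^\w\pi^{\w_1}_\alpha(\mu)\}$ is stationary in $\w_1$, which is exactly the assertion of the corollary.

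I do not expect a serious obstacle here, since the work is entirely in checking that the named functorial properties apply; the one point deserving a sentence of care is the subcontinuity of $\Sigma$, making sure the quoted subcontinuity of $P$ (and of $(\cdot)^\w$, and hence of $P^\w$) is applied to the already-subcontinuous Cantor-cube spectrum rather than being assumed of $\Sigma$ directly. Everything else — the composition law, the cardinality estimate — is routine.
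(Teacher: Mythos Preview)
Your proposal is correct and follows exactly the paper's own argument: apply Proposition~\ref{p:diamond} to the spectrum $(P^\w(2^\alpha),P^\w\pi^\beta_\alpha)$, using the subcontinuity of the functor $P^\w$ (applied to the subcontinuous Cantor-cube spectrum) and the cardinality bound coming from metrizability of $P^\w(2^\alpha)$ for countable $\alpha$. The paper's proof is just a terser version of what you wrote.
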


\begin{proof} By the subcontinuity of the functor $P^\w$, the $(\w_1+1)$-spectrum $(P^\w(2^\alpha),P^\w\pi^\beta_\alpha:\alpha\le\beta\le\w_1)$ is subcontinuous. For every countable ordinal $\alpha$, the compact space $P^\w(2^\w)$ is metrizable and hence has cardinality $\le\mathfrak c$. Applying Proposition~\ref{p:diamond}, we obtain a transfinite sequence $(\mu_\alpha)_{\alpha\in\w_1}\in\prod_{\alpha\in\w_1}P^\w(2^\alpha)$ with the desired property.
\end{proof}


\section{Proof of Theorem~\ref{t:main}}\label{s:main}


Consider the $(\w_1+1)$-spectrum $(2^\alpha,\pi^\beta_\alpha:\alpha\le\beta\le\w_1\}$ consisting of the Cantor cubes $2^\alpha$ and projections $\pi^\beta_\alpha:2^\beta\to 2^\alpha$, $\pi^\beta_\alpha:x\mapsto x{\restriction}\alpha$.

Assume $\diamondsuit$. By Corollary~\ref{c:diamond}, there exists a transfinite sequence $$\big((\mu^\alpha_n)_{n\in\w}\big)_{\alpha\in\w_1}\in\prod_{\alpha\in\w_1}\big(P(2^{\alpha})\big)^\w$$ such that for any sequence of measures $(\mu_n)_{n\in\w}\in \big(P(2^{\w_1})\big)^\w$ the set
\[
\{\alpha\in\w_1:\;\mu_n^\alpha=P\pi_\alpha^{\w_1}(\mu_n)\; \mbox{ for every } n\in\w\}
\]
is stationary in $\w_1$.


Let $\Lambda$ be the set of all nonzero limit ordinals $\alpha\in\w_1$ such that $(\mu^\alpha_n)_{n\in\w}$ is a sequence of pairwise distinct Dirac measures that converges to the Dirac measure $\mu^\alpha_0\in X_\alpha\subseteq P(X_\alpha)$.

Let $\Omega$ be the set of all nonzero limit ordinals $\alpha\in\w_1$ such that
 $\lim_{n\in\w}\sup_{x\in 2^\alpha}\mu^\alpha_n(\{x\})=0$.

Observe that the sets $\Lambda$ and $\Omega$ are disjoint.
\smallskip

Now we shall inductively construct families of sets $(I_{\alpha})_{\alpha\in\w_1}$, $(X_{\alpha})_{\alpha\in\w_1}$, $(A_{\alpha})_{\alpha\in\w_1}$, $(B_{\alpha})_{\alpha\in\w_1}$ such that  for every $\alpha\in\w_1$, the following conditions are satisfied:
\begin{enumerate}\itemsep=2pt
\item[(1)] if $\alpha\le\w$, then $X_\alpha=A_\alpha=2^\alpha$, $B_\alpha=\emptyset$ and $I_\alpha=\w$;
\item[(2)] if the ordinal $\alpha$ is limit, then $X_\alpha=\bigcap_{\gamma<\alpha}(\pi_\gamma^\alpha)^{-1}[X_\gamma]$;
\item[(3)] $I_\alpha$ is an infinite subset of $\w$;
\item[(4)] $X_\alpha,A_\alpha,B_\alpha$ are  closed crowded subspaces of $2^\alpha$ and $X_\alpha\ne\emptyset$;
\item[(5)] $X_\alpha=A_\alpha\cup B_\alpha$ and  $|A_\alpha\cap B_\alpha|\le 1$;
\item[(6)] $X_{\alpha+1}=(A_\alpha\times\{0\})\cup(B_\alpha\times\{1\})\subseteq X_\alpha\times 2\subseteq 2^{\alpha+1}$;
\item[(7)] $\pi^\alpha_\gamma[X_\alpha]=X_\gamma$ for every $\gamma<\alpha$;
\item[(8)] if $\alpha\in\Lambda$ and  $\bigcup_{n\in\w}\supp(\mu^\alpha_n)\subseteq X_\alpha$, then\\
$
\bigcup_{n\in\w} \supp(\mu^\alpha_{2n+1})\subseteq  A_\alpha$ and $\bigcup_{n\in\w} \supp(\mu^\alpha_{2n+2})\subseteq  B_\alpha$;
\item[(9)] if $\alpha\in \Omega$ and $\bigcup_{n\in\w}\supp(\mu^\alpha_n)\subseteq X_\alpha$, then for any sequence of measures $(\lambda_n)_{n\in\w}\in\big(P(X_{\alpha+\w})\big)^\w$ with $P\pi^{\alpha+\w}_\alpha(\lambda_n)=\mu^\alpha_n$ for all $n\in\w$, every accumulation point of the sequence $(\lambda_n)_{n\in I_\alpha}$ in $P(X_{\alpha+\w})$ is a nonatomic measure on the space $X_{\alpha+\w}$. 
\end{enumerate}
\smallskip

To start the inductive construction, for every $\alpha\le\w$, put $X_\alpha=A_\alpha=2^\alpha$, $B_\alpha=\emptyset$ and $I_\alpha=\w$. Assume that for some limit ordinal $\alpha$ and all $\gamma<\alpha$ we have constructed nonempty closed subspaces $X_\gamma\subseteq 2^\gamma$ that have no isolated points and satisfy the inductive condition (7). Put $X_\alpha=\bigcap_{\gamma<\alpha}(\pi_\gamma^\alpha)^{-1}[X_\gamma]$ and observe that $X_\alpha$ is crowded and satisfies the inductive condition (7).

To define the other sets we consider the following three cases.
\smallskip

\noindent {\em Case 1: Assume that $\alpha\notin \Lambda\cup\Omega$ or $\bigcup_{n\in\w}\supp(\mu_n^\alpha)\not\subseteq X_\alpha$}. In this case put
\[
I_{\alpha+n}=\w,\; X_{\alpha+n}=A_{\alpha+n}=X_\alpha\times\{0\}^n\; \mbox{  and } \; B_{\alpha+n}=\emptyset
\]
for every $n\in\w$.
It is easy to see that the families $(I_\beta)_{\beta<\alpha+\w}$,  $(X_\beta)_{\beta<\alpha+\w}$,  $(A_\beta)_{\beta<\alpha+\w}$,  $(B_\beta)_{\beta<\alpha+\w}$ satisfy the inductive conditions (1)--(9). 
\smallskip


\noindent {\em Case 2: Assume that $\alpha\in\Lambda$ and $\bigcup_{n\in\w}\supp(\mu^\alpha_n)\subseteq X_\alpha$.}  In this case the definition of the set $\Lambda$ ensures that the sequence $(\mu^\alpha_n)_{n\in\w}$ consists of pairwise distinct Dirac measures that converge to the Dirac measure $\mu^\alpha_0$ in $2^\alpha$. Since the sets $\bigcup_{n\in\w}\supp(\mu^\alpha_{2n+1})$ and  $\bigcup_{n\in\w}\supp(\mu^\alpha_{2n+2})$ are closed and disjoint in the zero-dimensional metric space $X_\alpha\setminus \supp(\mu^\alpha_0)$, we can use \cite[Theorem~6.2.7]{Eng} (as in the proof of Lemma \ref{l:atom}) and find two disjoint clopen sets $V_\alpha,W_\alpha$ in $X_\alpha\setminus\supp(\mu^\alpha_0)$ such that
\[
\bigcup_{n\in\w}\supp(\mu^\alpha_{2n+1})\subseteq V_\alpha,\;\bigcup_{n\in\w}\supp(\mu^\alpha_{2n+2})\subseteq W_\alpha,\;V_\alpha\cap W_\alpha=\emptyset,\;V_\alpha\cup W_\alpha=X_\alpha\setminus \supp(\mu^\alpha_0).
\]
Since the space $X_\alpha\setminus \supp(\mu^\alpha_0)$ is crowded, so are its clopen subspaces $V_\alpha$ and $W_\alpha$.

Consider the subsets $A_\alpha:=V_\alpha\cup \supp(\mu^\alpha_0)$, $B_\alpha:=W_\alpha\cup \supp(\mu^\alpha_0)$ of $2^\alpha$, and observe that they are closed in $2^\alpha$ and have no isolated points. Define
\[
X_{\alpha+1}=(A_\alpha\times\{0\})\cup(B_\alpha\times\{1\})\subseteq X_\alpha\times 2.
\]
Set $I_\alpha:=\w$. For every ordinal $n\in[1,\w)$,  put
\[
I_{\alpha+n}:=\w,\;\;X_{\alpha+n}=A_{\alpha+n}:=X_{\alpha+1}\times \{0\}^{n-1}\subseteq X_\alpha\times 2^n \; \mbox{ and } \; B_{\beta+n}:=\emptyset.
\]
It is easy to see that the  families $(I_\beta)_{\beta<\alpha+\w}$, $(X_\beta)_{\beta<\alpha+\w}$, $(A_\beta)_{\beta<\alpha+\w}$ and $(B_\beta)_{\beta<\alpha+\w}$ satisfy the inductive conditions (1)--(9).
\smallskip

\noindent{\em Case 3: Assume that $\alpha\in\Omega$ and $\bigcup_{n\in\w}\supp(\mu^\alpha_0)\subseteq X_\alpha$.}  In this case the definition of the set $\Omega$ ensures that $\lim_{n\in\w}\sup_{x\in 2^\alpha}\mu^\alpha_n(x)=0$. Since $P(X_\alpha)$ is a compact metrizable space, there exists an infinite set $J_\alpha\subseteq\w$ such that the sequence $(\mu^\alpha_n)_{n\in J_\alpha}$ converges to some measure $\mu\in P(X_\alpha)$. If the measure $\mu$ is nonatomic, then, for every ordinal $n<\w$, we put
\[
I_{\alpha+n}=J_\alpha,\;X_{\alpha+n}=A_{\alpha+n}=X_\alpha\times\{0\}^n \; \mbox{ and }\; B_{\alpha+n}=\emptyset, 
\]
and observe that the families $(I_\beta)_{\beta<\alpha+\w}$, $(X_\beta)_{\beta<\alpha+\w}$, $(A_\beta)_{\beta<\alpha+\w}$ and $(B_\beta)_{\beta<\alpha+\w}$ satisfy the inductive conditions (1)--(9).

\smallskip

Now we assume that the measure $\mu$ has atoms. Let $T_\mu=\{x\in X_\alpha:\mu(\{x\})>0\}$ be the set of all atoms of the measure $\mu$. The additivity of the probability measure $\mu$ ensures that the set $T_\mu$ is at most countable. Since the compact metrizable space $X_\alpha$ is crowded, the $G_\delta$-subset $G_\alpha:=X_\alpha\setminus T_\mu$ of $X_\alpha$ is dense in $X_\alpha$ (by the Baire Theorem) and also crowded.

The set $T_\mu$ is at most countable and hence admits a well-order $\preceq$ such that for every $a\in T_\mu$ the set ${\downarrow}a=\{x\in T_\mu:x\preceq a\}$ is finite.

Using Lemma~\ref{l:atom} inductively, for every $a\in T_\mu$ we can choose an infinite subset $J_{\alpha,a}$ of $J_\alpha$ and a family $(Z_{(a,s)})_{s\in 2^{<\w}}$ of clopen subsets of $X_\alpha\setminus\{a\}$ such that $Z_{(a,\emptyset)}=X_\alpha\setminus\{a\}$, $J_{\alpha,a}\subseteq J_{\alpha,b}$ for any $b\in T_\mu$ with $b\preceq a$ and for every $s\in 2^{<\w}$ the following conditions are satisfied:
\begin{itemize}
\item[$(\dag)$] $Z_{(a,s\hat{\;}0)}\cup Z_{(a,s\hat{\;}1)}=Z_{(a,s)}$ and $Z_{(a,s\hat{\;}0)}\cap Z_{(a,s\hat{\;}1)}=\emptyset$;
\item[$(\ddag)$] for any clopen neighborhood $U\subseteq X_\alpha$ of $a$ and each $\e>0$ there exists an $m\in\w$ such that for every $n\in J_{\alpha,a}$ with $n\ge m$ we have $\mu^\alpha_n(U\cap Z_{(a,s)})<\mu(U\SM\{a\})+\frac1{2^{|s|}}+\e$.
\end{itemize}

Choose any infinite set $I_\alpha\subseteq J_\alpha$ such that $I_\alpha\subseteq^* J_{\alpha,a}$ for every $a\in T_\mu$.

Fix a bijective function $\xi:\w\to T_\mu\times 2^{<\w}$ such that for any $a\in T_\mu$ and $s<t$ in $2^{<\w}$ we have $\xi^{-1}(a,s)<\xi^{-1}(a,t)$ (to construct such bijection it is sufficient to consider a partition of $\w$ onto $|T_\mu|$ infinite sets, and on every set of the partition to consider a copy of the bijection  $2^{<\w}\to\w$, $t\mapsto \sum_{i=0}^{k-1} t(i) 2^i$).
For every $n\in\w$, let $\gamma_n:G_\alpha\to 2$ be the characteristic function of the clopen subset $Z_{\xi(n)}\cap G_\alpha$ in $G_\alpha$.

Now, for every $n\in\w$, we define the sets $ X_{\alpha+n}, A_{\alpha+n}$ and $B_{\alpha+n}$ as follows:
\begin{itemize}
\item[$\bullet$] $X_{\alpha+n}$ is the closure of the set $\{(z,\gamma_0(z),\dots,\gamma_{n-1}(z)):z\in G_\alpha\}$ in $X_\alpha\times 2^n$;
\item[$\bullet$]  $A_{\alpha+n}$ is the closure of the set $\{(z,\gamma_0(z),\dots,\gamma_{n-1}(z)):z\in G_\alpha\setminus Z_{\xi(n)}\}$ in $X_\alpha\times 2^n$;
\item[$\bullet$]  $B_{\alpha+n}$ is the closure of the set $\{(z,\gamma_0(z),\dots,\gamma_{n-1}(z)):z\in G_\alpha\cap Z_{\xi(n)}\}$ in $X_\alpha\times 2^n$.
\end{itemize}
Since $Z_{\xi(n)}\cap G_\alpha$ is a clopen set in  $G_\alpha$,  the spaces $X_{\alpha+n},A_{\alpha+n},B_{\alpha+n}$ are crowded, being closures of the the topological copies of the crowded spaces $G_\alpha$, $G_\alpha\setminus Z_{\xi(n)}$ and $G_\alpha\cap Z_{\xi(n)}$. This means that the condition (4) is satisfied.

Next, we show that the condition (5) is satisfied. For every $n\in\w$, the equality $X_{\alpha+n}=A_{\alpha+n}\cup B_{\alpha+n}$ holds by the definitions of the sets $X_{\alpha+n},A_{\alpha+n},B_{\alpha+n}$. To see that $|A_{\alpha+n}\cap B_{\alpha+n}|\le 1$, take any points $(x_0,t_0),(x_1,t_1)\in A_{\alpha+n}\cap B_{\alpha+n}\subseteq X_{\alpha+n}\subseteq X_\alpha\times 2^n$, where $x_0,x_1\in X_\alpha$ and $t_0,t_1\in 2^n$. Find $a\in T_\mu$ and $s\in 2^{<\w}$ such that $(a,s)=\xi(n)$. Taking into account that all $Z_{(a,s)}$ are clopen in $X_\alpha\SM\{ a\}$, the condition $(\dag)$ implies
\[
x_0,x_1\in\pi^{\alpha+n}_\alpha(A_{\alpha+n})\cap \pi^{\alpha+n}_\alpha(B_{\alpha+n})\subseteq \overline{G_\alpha\setminus Z_{\xi(n)}}\cap \overline{G_\alpha\cap Z_{\xi(n)}}\subseteq\{a\},
\]
where the closure is taken in the compact space $X_\alpha$.
Therefore  $x_0=a=x_1$. Assuming that $t_0\ne t_1$, we can find an ordinal $k\in n$ such that $t_0(k)\ne t_1(k)$. Let $\xi(k)=(b,t)$ for some $b\in T_\mu$ and $t\in 2^{<\w}$. If $b\ne a$, then either $Z_{\xi(k)}$ or $X_\alpha\setminus Z_{\xi(k)}$ is a neighborhood of $a$ in $X_\alpha$. By the definition of $\gamma_k$, in the first case we get $t_0(k)=t_1(k)=1$, and in the second case we get $t_0(k)=t_1(k)=0$. But this contradicts the inequality $t_0(k)\ne t_1(k)$, and hence $b=a$. By the choice of the function $\xi$, either $t\le s$ or $t$ and $s$ are incomparable in the tree $2^{<\w}$. If $t\le s$, then $Z_{\xi(n)}\subseteq Z_{\xi(k)}$ and hence $\gamma_k(G_\alpha\cap Z_{\xi(n)})=\{1\}$ and $t_0(k)=t_1(k)=1$. If $s$ and $t$ are incomparable, then $Z_{\xi(n)}\cap Z_{\xi(k)}=\emptyset$ and $\gamma_k(G_\alpha\cap Z_{\xi(n)})=\{0\}$, which implies $t_0(k)=t_1(k)=0$. In both cases we get a contradiction with the choice of $k$. Therefore, $|A_{\alpha+n}\cap B_{\alpha+n}|\le 1$ and the condition (5) holds.

The conditions (6) and (7) follow from the definition of the spaces $X_{\alpha+n+1}$, $A_{\alpha+n},B_{\alpha+n}$ and the inclusions $\gamma_n(G_\alpha\setminus Z_{\xi(n)})\subseteq\{0\}$ and $\gamma_n(G_\alpha\cap Z_{\xi(n)})\subseteq\{1\}$ holding for all $n\in\w$.
\smallskip



Finally, we check the inductive condition (9). Choose a sequence of measures $(\lambda_n)_{n\in \w}\subseteq (P(X_{\alpha+\w}))^\w$ such that $P\pi^{\alpha+\w}_\alpha(\lambda_n)=\mu^\alpha_n$ for all $n\in\w$. Let $\lambda\in P(X_{\alpha+\w})$ be an accumulation point of the sequence $(\lambda_n)_{n\in I_\alpha}$. The convergence of the sequence $(\mu^\alpha_n)_{n\in J_\alpha}$ to the measure $\mu$ and the continuity of the map $P\pi^{\alpha+\w}_\alpha:P(X_{\alpha+\w})\to P(X_\alpha)$ imply that $P\pi^{\alpha+\w}_\alpha(\lambda)=\mu$. We should prove that the measure $\lambda$ is nonatomic. To derive a contradiction, assume that $\lambda(\{b\})>0$ for some $b\in X_{\alpha+\w}$. Consider the point $a=\pi^{\alpha+\w}_\alpha(b)$ and conclude that $\mu(\{a\})=\lambda((\pi^{\alpha+\w}_\alpha)^{-1}(a))\ge\lambda(\{b\})>0$ and hence $a\in T_\mu$. By the regularity of the measure $\mu$, there exists a clopen neighborhood $U\subseteq X_\alpha$ of $a$ such that $\mu(U\setminus\{a\})<\frac{1}{4}\lambda(\{b\})$. Find $l\in\w$ such that $\frac{2}{2^l}<\frac{1}{4}\lambda(\{b\})$. By the property $(\ddag)$ and almost inclusion $I_\alpha\subseteq^* J_{\alpha,a}$, there exists an $m\in\w$ such that for every $s\in 2^l$ and every $n\in I_\alpha$ with $n\ge m$ we have
\begin{equation}\label{equ:pl-JNP-5}
\mu^\alpha_n(U\cap Z_{a,s})<\mu(U\setminus\{a\})+\tfrac{1}{2^{|s|}}+\tfrac{1}{2^l}<\tfrac{2}{4}\lambda(\{b\}).
\end{equation}
Since $\lim_{n\in\w}\sup_{x\in 2^\alpha}\mu^\alpha_n(\{x\})=0$, we can replace $m$ by a larger number, if necessary, and assume additionally that $\mu_n^\alpha(\{a\})<\frac{1}{4}\lambda(\{b\})$ for all $n\ge m$.

 Consider the continuous map
\[
\gamma:G_\alpha\to X_{\alpha+\w}\subseteq X_\alpha\times 2^\w, \quad \gamma:x\mapsto \big(x,(\gamma_n(x))_{n\in\w}\big),
\]
and observe that the image $\gamma[G_\alpha]$ is dense in $X_{\alpha+\w}$. For every $s\in 2^l$, consider the closed set $W_s=\overline{\gamma[G_\alpha\cap Z_{(a,s)}]}$ in $X_{\alpha+\w}$.
We claim that the family $\{W_s\}_{s\in 2^l}$ is disjoint. Indeed, suppose for a contradiction that for some $s_0,s_1\in 2^l$, the intersection $W_{s_0}\cap W_{s_1}$ contains a point, say $\xxx$. Since $X_{\alpha+\w}$ is metrizable, there are two sequences $\{z_n^0\}_{n\in\w}$ and $\{z_n^1\}_{n\in\w}$ in $G_\alpha\cap Z_{(a,s_0)}$ and $G_\alpha\cap Z_{(a,s_1)}$, respectively, such that $\lim_n\gamma(z_n^0)=\xxx=\lim_n\gamma(z_n^1)$. Set $m:=\xi^{-1}(a,s_0)$. Since, by  $(\dag)$, $Z_{(a,s_0)}\cap Z_{(a,s_1)}=\emptyset$, the definition of $\gamma_m$ implies that $\gamma_m(z_n^0)=1$   and $\gamma_m(z_n^1)=0$ for every $n\in\w$, and hence $\lim_n\gamma(z_n^0)\not=\lim_n\gamma(z_n^1)$. This contradiction proves the claim.
Then the density of  $\gamma[G_\alpha]$ implies that the finite family $\{W_s\}_{s\in 2^l}$ covers the compact space $X_{\alpha+\w}$. This fact and the claim imply that $\{W_s\}_{s\in 2^l}$ is a disjoint clopen cover of $X_{\alpha+\w}$.

Find a  unique $s\in 2^l$ such that the clopen set $W_s:=\overline{\gamma(G_\alpha\cap Z_{(a,s)})}$ contains the point $b$. Then the set $W:=W_s\cap (\pi^{\alpha+\w}_\alpha)^{-1}(U)$ is a clopen neighborhood of the point $b$ in $X_{\alpha+\w}$.
Since $\lambda$ is an accumulating point of the sequence $(\lambda_n)_{n\in I_\alpha}$, there exists $n\in I_\alpha$ with $n\ge m$ such that $\lambda_n(W)>\frac{3}{4}\lambda(\{b\})$.

Taking into account that $Z_{(a,s)}$ is a closed subset of $X_\alpha\setminus \{a\}$, we conclude that
\[
W_s\subseteq (\pi_\alpha^{\alpha+\w})^{-1}\big(\{a\}\cup Z_{(a,s)}\big)\; \mbox{ and } \; W\subseteq  (\pi_\alpha^{\alpha+\w})^{-1}\big(\{a\}\cup (U\cap Z_{(a,s)})\big).
\]
Now observe that
\begin{multline*}
\mu^\alpha_n(U\cap Z_{(a,s)})=\mu^\alpha_n((U\cap Z_{(a,s)})\cup\{a\})-\mu_n^\alpha(\{a\})\\
>\lambda_n\big((\pi_\alpha^{\alpha+\w})^{-1}((U\cap Z_{(a,s)})\cup\{a\})\big)-\tfrac14\lambda(\{b\})\ge
 \lambda_n(W)-\tfrac14\lambda(\{b\})>\tfrac{2}{4}\lambda(\{b\}),
\end{multline*}
which contradicts (\ref{equ:pl-JNP-5}). This finishes the inductive construction.
\smallskip

After completing the inductive construction, consider the closed subspace
\[
K=\bigcap_{\alpha\in\w_1}\{x\in 2^{\w_1}:x{\restriction}_\alpha\in X_\alpha\}
 \]
of the compact space $2^{\w_1}$. Below we show that the compact space $K$ satisfies all conditions of Theorem~\ref{t:main}. 


\begin{claim}\label{cl:simple}
The compact space $K$ is simple and crowded.
\end{claim}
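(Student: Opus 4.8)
The plan is to read off from the inductive construction that $K$ is the limit of the well-ordered continuous spectrum $\Sigma=(X_\alpha,\pi^\beta_\alpha:\alpha\le\beta<\w_1)$ of the constructed spaces $X_\alpha$, to check that $\Sigma$ witnesses simplicity of $K$, and then to derive crowdedness of $K$ from a soft property of inverse limits together with the crowdedness of every $X_\alpha$.

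For simplicity, first I would record that by condition (4) each $X_\alpha$ is a closed subspace of the cube $2^\alpha$, and since $\alpha<\w_1$ is countable this cube is zero-dimensional, compact and metrizable; hence so is $X_\alpha$. By condition (7) every bonding map $\pi^\beta_\alpha:X_\beta\to X_\alpha$ (the restriction of the coordinate projection $2^\beta\to 2^\alpha$) is continuous and surjective, and by condition (2) the spectrum is continuous at limit ordinals. The only nontrivial point is that each successor bonding map $\pi^{\alpha+1}_\alpha:X_{\alpha+1}\to X_\alpha$ is a simple map. Here I would use that $X_{\alpha+1}=(A_\alpha\times\{0\})\cup(B_\alpha\times\{1\})$ (condition (6)) with $X_\alpha=A_\alpha\cup B_\alpha$ and $|A_\alpha\cap B_\alpha|\le 1$ (condition (5)), and that $X_{\alpha+1}$ is crowded (condition (4)). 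Since $\pi^{\alpha+1}_\alpha$ is the map $(x,i)\mapsto x$, it is injective when $A_\alpha\cap B_\alpha=\emptyset$, hence a homeomorphism of compacta; and since a crowded space has no isolated points, any one-point fibre is nowhere dense, so $\pi^{\alpha+1}_\alpha$ is simple in this case. When $A_\alpha\cap B_\alpha=\{y\}$, the fibre $(\pi^{\alpha+1}_\alpha)^{-1}(y)=\{(y,0),(y,1)\}$ consists of two points, is nowhere dense (a finite set in a crowded space), and all other fibres are singletons, so $\pi^{\alpha+1}_\alpha$ is simple here too. Thus $\Sigma$ is a well-ordered continuous spectrum of length $\w_1$ of zero-dimensional compact metrizable spaces with simple bonding maps whose limit is exactly $K=\bigcap_{\alpha\in\w_1}\{x\in 2^{\w_1}:x{\restriction}_\alpha\in X_\alpha\}$ (nonempty, being the limit of nonempty compacta with surjective bonding maps), so $K$ is simple.

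For crowdedness I would invoke the standard fact that in an inverse spectrum of compact Hausdorff spaces with surjective bonding maps, a point $x$ of the limit is isolated precisely when $\pi_\alpha(x)$ is isolated in $X_\alpha$ for some $\alpha$. Indeed, if $\{x\}$ is open in $K$, then being a single point it equals a basic open subset of the limit, and every such set can be written as $\{y\in K:y{\restriction}_\alpha\in W\}$ for a single countable ordinal $\alpha$ and an open $W\subseteq X_\alpha$ with $x{\restriction}_\alpha\in W$ (gathering the finitely many coordinates that define the basic set under one large $\alpha$). Applying the projection $\pi_\alpha:K\to X_\alpha$, which is surjective because all bonding maps are (condition (7)), to the equality $\{x\}=\{y\in K:y{\restriction}_\alpha\in W\}$ yields $W=\{x{\restriction}_\alpha\}$, so $x{\restriction}_\alpha$ would be isolated in $X_\alpha$, contradicting condition (4). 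Hence $K$ has no isolated points.

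The identification of $\Sigma$ and the bookkeeping with conditions (1)--(7) are routine; the two steps that need genuine care are the verification that $\pi^{\alpha+1}_\alpha$ is simple in \emph{both} cases (in particular, that once $X_{\alpha+1}$ is crowded a homeomorphism qualifies as a simple map), and the reduction of an arbitrary basic neighbourhood of $x$ in $K$ to one determined by a single coordinate ordinal $\alpha$ --- this is precisely what allows surjectivity of $\pi_\alpha$ to collapse the open set $W$ to a point. I expect the simple-map case distinction to be the main, if minor, obstacle.
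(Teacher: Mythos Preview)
Your proof is correct and follows the same line as the paper's: the paper simply asserts simplicity ``by (4)--(7)'' whereas you spell out the case distinction $|A_\alpha\cap B_\alpha|\in\{0,1\}$ showing that $\pi^{\alpha+1}_\alpha$ is a simple map, and your crowdedness argument is the contrapositive of the paper's direct argument (pick a basic neighbourhood, absorb its finitely many coordinates into some $\alpha$, and use that $X_\alpha$ is crowded together with surjectivity of the limit projection). The only cosmetic difference is that the paper works with an arbitrary basic neighbourhood $U$ and exhibits a second point in it, while you assume $\{x\}$ is open and derive a contradiction; both are fine.
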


\begin{proof}
The space $K$ is simple by  (4)--(7). To show that $K$ is crowded, fix a point $z\in K$ and an arbitrary basic neighborhood $U=\{x\in K:x{\restriction}_F=z{\restriction}_F\}$ of $z$, where $F\subseteq \w_1$. Choose a countable ordinal $\alpha\in\w_1$ such that $F\subseteq \alpha$. The space $X_\alpha$ is crowded (by (4)) and hence contains a point $x_\alpha\in X_\alpha$ such that $x_\alpha\in \pi^{\w_1}_\alpha(U)\setminus \{\pi^{\w_1}_\alpha(z)\}$. Then any $x\in (\pi^{\w_1}_\alpha)^{-1}(x_\alpha) \cap K$ belongs to $U\setminus\{z\}$, witnessing that $K$ is crowded.
\end{proof}

\begin{claim}\label{cl:seq}
The space $K$ is sequentially discrete.
\end{claim}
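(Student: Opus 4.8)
The plan is to argue by contradiction: suppose $K$ contains a nontrivial convergent sequence. Since in a Hausdorff space a convergent sequence attaining only finitely many values is eventually constant, our sequence attains infinitely many values, and after passing to a subsequence we may assume it consists of pairwise distinct points $z_n\in K$ converging to a point $z\in K$ with $z_n\ne z$ for all $n$. Reindexing by $y_0:=z$ and $y_{n+1}:=z_n$, we obtain a sequence $(y_n)_{n\in\w}$ of pairwise distinct points of $K$ converging to its initial term $y_0$; equivalently, $(\delta_{y_n})_{n\in\w}$ is a sequence of pairwise distinct Dirac measures in $P(2^{\w_1})$ converging to $\delta_{y_0}$.

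Next I would pin down a good countable ordinal $\alpha$. Applying the $\diamondsuit$-guessing property of the transfinite sequence $\big((\mu^\alpha_n)_{n\in\w}\big)_{\alpha\in\w_1}$ to $(\delta_{y_n})_{n\in\w}$, the set $S=\{\alpha\in\w_1:\mu^\alpha_n=\delta_{y_n{\restriction}_\alpha}\text{ for all }n\in\w\}$ is stationary in $\w_1$. Since the $y_n$ are pairwise distinct points of $2^{\w_1}$, there is a countable ordinal $\beta_0$ such that the restrictions $(y_n{\restriction}_\alpha)_{n\in\w}$ stay pairwise distinct for all $\alpha\ge\beta_0$; and, as each projection $\pi^{\w_1}_\alpha$ is continuous, $y_n{\restriction}_\alpha\to y_0{\restriction}_\alpha$ in $2^\alpha$ for every $\alpha$. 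The set of nonzero limit ordinals $\alpha\ge\beta_0$ is closed and unbounded, so I may choose $\alpha\in S$ inside it. For this $\alpha$ the sequence $(\mu^\alpha_n)_{n\in\w}=(\delta_{y_n{\restriction}_\alpha})_{n\in\w}$ consists of pairwise distinct Dirac measures converging to $\mu^\alpha_0=\delta_{y_0{\restriction}_\alpha}$ with $y_0{\restriction}_\alpha\in X_\alpha$, so $\alpha\in\Lambda$; and since each $y_n$ lies in $K$, every $y_n{\restriction}_\alpha$ lies in $X_\alpha$, so $\bigcup_{n\in\w}\supp(\mu^\alpha_n)\subseteq X_\alpha$. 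Hence stage $\alpha$ of the inductive construction was handled by Case~2.

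The contradiction then comes from the explicit description of $X_{\alpha+1}$ in Case~2 together with condition (8). There, $X_\alpha\setminus\supp(\mu^\alpha_0)$ was partitioned into disjoint clopen sets $V_\alpha\supseteq\bigcup_n\supp(\mu^\alpha_{2n+1})$ and $W_\alpha\supseteq\bigcup_n\supp(\mu^\alpha_{2n+2})$, and $X_{\alpha+1}=(A_\alpha\times\{0\})\cup(B_\alpha\times\{1\})$ with $A_\alpha=V_\alpha\cup\supp(\mu^\alpha_0)$ and $B_\alpha=W_\alpha\cup\supp(\mu^\alpha_0)$. Because $V_\alpha$ is disjoint from $B_\alpha$, the only point of $X_{\alpha+1}$ lying over a point of $V_\alpha$ has last coordinate $0$; symmetrically, the only point of $X_{\alpha+1}$ lying over a point of $W_\alpha$ has last coordinate $1$. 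As $\mu^\alpha_n=\delta_{y_n{\restriction}_\alpha}$, the odd-indexed restrictions $y_{2k+1}{\restriction}_\alpha$ lie in $V_\alpha$ and the even-indexed restrictions $y_{2k+2}{\restriction}_\alpha$ lie in $W_\alpha$; since $y_n{\restriction}_\alpha=\pi^{\w_1}_\alpha(y_n)$ and $y_n{\restriction}_{\alpha+1}\in X_{\alpha+1}$, this forces $y_n(\alpha)=0$ for all odd $n\ge1$ and $y_n(\alpha)=1$ for all even $n\ge2$. But $y_0(\alpha)$ is a fixed value $i\in\{0,1\}$, and $U=\{x\in K:x(\alpha)=i\}$ is a clopen neighborhood of $z=y_0$ in $K$; the convergence $y_n\to y_0$ would force $y_n(\alpha)=i$ for all large $n$, contradicting the fact that the value $1-i$ is attained by $y_n(\alpha)$ for infinitely many $n$. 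This contradiction shows that $K$ admits no nontrivial convergent sequence, i.e., $K$ is sequentially discrete.

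The step I expect to be the main obstacle is the middle one: verifying that the $\diamondsuit$-guessed sequence $(\mu^\alpha_n)_{n\in\w}$ genuinely coincides with the sequence of restricted Dirac measures for a suitable $\alpha$, and that being a nontrivial convergent sequence of pairwise distinct points survives the projection $\pi^{\w_1}_\alpha$, so that $\alpha$ really lands in $\Lambda$ with all supports inside $X_\alpha$. Once this bookkeeping --- a standard argument that the stationary set $S$ meets every closed unbounded set --- is carried out, the combinatorics of $X_{\alpha+1}$ in Case~2 produces the contradiction almost mechanically.
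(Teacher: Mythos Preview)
Your proposal is correct and follows essentially the same approach as the paper's proof: both argue by contradiction, set up a sequence of pairwise distinct Dirac measures converging to its initial term, use stationarity to find a limit ordinal $\alpha\in\Lambda$ at which the projected sequence is still injective and supported in $X_\alpha$, and then invoke condition~(8) to see that the odd- and even-indexed terms are separated in $X_{\alpha+1}$, contradicting convergence. The only difference is presentational: the paper phrases the contradiction as ``the sets $A_\alpha\times\{0\}$ and $B_\alpha\times\{1\}$ have disjoint closures in $X_{\alpha+1}$, so the projected sequence diverges,'' whereas you unpack this by looking explicitly at the $\alpha$-th coordinate.
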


\begin{proof}
To derive a contradiction, assume that $K$ contains a nontrivial convergent sequence. Then there exists a sequence $\{\mu_n\}_{n\in\w}\subseteq K\subseteq P(K)$ of pairwise distinct Dirac measures, converging to the Dirac measure $\mu_0$.
The choice of the transfinite sequence $\big((\mu_n^\alpha\big)_{n\in\w})_{\alpha\in\w_1}$ guarantees that the set
\[
S:=\{\alpha\in\w_1:\;\mu_n^\alpha=P\pi^{\w_1}_\alpha(\mu_n)\; \mbox{ for all } \; n\in\w\}
\]
is stationary in $\w_1$, and hence it contains an infinite limit ordinal $\alpha\in S$ such that the function $\w\to 2^\alpha\subseteq P(2^\alpha)$, $n\mapsto P\pi_\alpha^{\w_1}(\mu_n)$, is injective. Then the ordinal $\alpha$ belongs to the set $\Lambda$ and the inductive condition (8) guarantees that $\bigcup_{n\in\w}\supp(\mu^\alpha_{2n+1})\subseteq A_\alpha$ and $\bigcup_{n\in\w}\supp(\mu^\alpha_{2n+2})\subseteq B_\alpha$. Since the sets $A_\alpha\times\{0\}$ and $B_\alpha\times\{1\}$ have disjoint closures in $X_{\alpha+1}$, the sequence $\big(P\pi^{\w_1}_{\alpha+1}(\mu_n)\big)_{n\in\w}$ is divergent, which contradicts the convergence of the sequence $(\mu_n)_{n\in\w}$.
\end{proof}

\begin{claim} \label{cl:Efimov}
The space $K$ is a simple crowded Efimov space.
\end{claim}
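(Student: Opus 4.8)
The plan is to read off the statement from the two preceding claims together with the structural fact, recalled in Section~\ref{s:pre}, that simple compact spaces contain no copy of $\beta\w$. First I would observe that by Claim~\ref{cl:simple} the space $K$ is simple and crowded. It is also nonempty, being the limit of the $(\w_1+1)$-spectrum $(X_\alpha,\pi^\beta_\alpha)$ of nonempty compact Hausdorff spaces $X_\alpha$ (condition~(4)) with bonding maps $\pi^\alpha_\gamma$ that are surjective onto $X_\gamma$ (condition~(7)); the projective limit of such a spectrum over the directed index set $\w_1$ is nonempty. Since a nonempty crowded space has no isolated points, it cannot be finite, so $K$ is an infinite compact Hausdorff space.

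Next I would invoke the two properties already established. Because $K$ is simple, the results of \cite{Kop,Dow} quoted in the Preliminaries ensure that $K$ contains no topological copy of $\beta\w$. Because $K$ is sequentially discrete by Claim~\ref{cl:seq}, it contains no nontrivial convergent sequence. Thus $K$ is an infinite compact space containing neither a nontrivial convergent sequence nor a copy of $\beta\w$, which is exactly the definition of a Efimov space; combined with the crowdedness and simplicity furnished by Claim~\ref{cl:simple}, this gives that $K$ is a simple crowded Efimov space, as claimed.

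I do not anticipate any real obstacle here: the entire content of the statement has already been distributed over Claim~\ref{cl:simple}, Claim~\ref{cl:seq}, and the cited theorems on simple spaces, so the argument is a short synthesis. The only point deserving a word of care is the non-degeneracy (infiniteness) of $K$, which follows from its crowdedness together with the non-emptiness of the inverse limit; everything else is immediate from the quoted results.
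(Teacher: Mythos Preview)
Your proposal is correct and follows essentially the same approach as the paper: invoke Claim~\ref{cl:simple} for simplicity and crowdedness, Claim~\ref{cl:seq} for the absence of nontrivial convergent sequences, and the cited result that simple compacta contain no copy of $\beta\w$. The only cosmetic difference is that the paper spells out the latter step a bit more (simple $\Rightarrow$ no continuous surjection onto $[0,1]^{\w_1}$, hence no embedded $\beta\w$ via Tietze--Urysohn), whereas you cite it directly from the Preliminaries; conversely, your explicit remark on nonemptiness/infiniteness is omitted in the paper.
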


\begin{proof}
By a result of Koppelberg \cite{Kop} (whose topological proof is given by Dow \cite[Proposition~1]{Dow}), the compact space $K$, being simple,  does not admit continuous surjective maps onto $[0,1]^{\w_1}$. Since $\beta\w$ admits a continuous surjective map onto $[0,1]^{\w_1}$ (because $[0,1]^{\w_1}$ is separable), the Tietze--Urysohn extension theorem  implies that $K$ contains no copies of $\beta\w$. This result and  Claims~\ref{cl:simple} and \ref{cl:seq} imply that $K$ is a simple crowded Efimov space.
\end{proof}

\begin{claim}\label{cl:converge}
Every $c_0$-vanishing sequence $(\mu_n)_{n\in\w}$ in $P(K)$ has a subsequence that converges to some nonatomic measure.
\end{claim}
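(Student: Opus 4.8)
The plan is to reduce the problem to a countable stage of the spectrum, where the inductive condition (9) can be applied. Let $(\mu_n)_{n\in\w}$ be a $c_0$-vanishing sequence in $P(K)$. First I would push the sequence down to $2^{\w_1}$ via the embedding $K\subseteq 2^{\w_1}$ and apply Lemma~\ref{l:vanish}: there is a countable ordinal $\alpha_0$ such that for every $\beta\in[\alpha_0,\w_1)$ the sequence $\big(P\pi^{\w_1}_\beta(\mu_n)\big)_{n\in\w}$ is $c_0$-vanishing. Next I would use the $\diamondsuit$-generated transfinite sequence $\big((\mu^\alpha_n)_{n\in\w}\big)_{\alpha\in\w_1}$ from Corollary~\ref{c:diamond}: the set $S=\{\alpha\in\w_1:\mu^\alpha_n=P\pi^{\w_1}_\alpha(\mu_n)\text{ for all }n\in\w\}$ is stationary in $\w_1$. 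Since $S$ is stationary, it meets the club of limit ordinals $\beta>\alpha_0$ with $\pi^{\w_1}_\beta[K]=X_\beta$ (recall $K{\restriction}_\beta\subseteq X_\beta$; one also wants, if needed, that the supports land inside $X_\beta$, which is automatic since all $\mu_n$ live on $K$). Fix such an $\alpha\in S$. Then $\mu^\alpha_n=P\pi^{\w_1}_\alpha(\mu_n)$ for all $n$, the sequence $(\mu^\alpha_n)_{n\in\w}$ is $c_0$-vanishing and lives on $X_\alpha$, so $\alpha\in\Omega$ and $\bigcup_{n\in\w}\supp(\mu^\alpha_n)\subseteq X_\alpha$; we are in Case~3 of the construction.

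Now the inductive hypothesis (9) applies. Consider the continuous projection $P\pi^{\w_1}_{\alpha+\w}:P(K)\to P(X_{\alpha+\w})$ and the measures $\lambda_n:=P\pi^{\w_1}_{\alpha+\w}(\mu_n)\in P(X_{\alpha+\w})$, which satisfy $P\pi^{\alpha+\w}_\alpha(\lambda_n)=\mu^\alpha_n$ for all $n$. By compactness of $P(X_{\alpha+\w})$ (it is metrizable, $X_{\alpha+\w}$ being a metrizable compactum), the subsequence $(\lambda_n)_{n\in I_\alpha}$ has a convergent subsequence, and by (9) every accumulation point of $(\lambda_n)_{n\in I_\alpha}$ in $P(X_{\alpha+\w})$ is nonatomic. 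Thus there is an infinite set $I\subseteq I_\alpha$ such that $(\lambda_n)_{n\in I}$ converges to a nonatomic measure $\nu\in P_{na}(X_{\alpha+\w})$.

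Finally I would lift this back to $K$. The space $P(K)$ is compact, so the subsequence $(\mu_n)_{n\in I}$ has an accumulation point; by passing to a further infinite subset $I'\subseteq I$ we get $(\mu_n)_{n\in I'}\to\mu$ for some $\mu\in P(K)$. By continuity of $P\pi^{\w_1}_{\alpha+\w}$ we have $P\pi^{\w_1}_{\alpha+\w}(\mu)=\nu$, which is nonatomic. It remains to deduce that $\mu$ itself is nonatomic: if $\mu(\{x\})>0$ for some $x\in K$, then $\nu(\{\pi^{\w_1}_{\alpha+\w}(x)\})=\mu\big((\pi^{\w_1}_{\alpha+\w})^{-1}(\pi^{\w_1}_{\alpha+\w}(x))\big)\ge\mu(\{x\})>0$, contradicting nonatomicity of $\nu$. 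Hence $\mu\in P_{na}(K)$ and $(\mu_n)_{n\in I'}$ is the required convergent subsequence.

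The main obstacle I anticipate is the bookkeeping around the stationarity argument: one must check that the club set of "good" ordinals $\beta$ — those that are limit, exceed $\alpha_0$, satisfy $\pi^{\w_1}_\beta[X_{\w_1\text{-part of }K}]=X_\beta$, and for which the projected sequence is $c_0$-vanishing with supports inside $X_\beta$ — really is closed and unbounded, so that its intersection with the stationary set $S$ is nonempty. The closedness at limits uses condition (2) (that $X_\beta=\bigcap_{\gamma<\beta}(\pi^\beta_\gamma)^{-1}[X_\gamma]$), and the $c_0$-vanishing clause is exactly Lemma~\ref{l:vanish}; the fact that an accumulation point exists and that (9) forces nonatomicity is then a direct invocation of the inductive construction. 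Everything else is routine continuity and a one-line atom-pushing argument.
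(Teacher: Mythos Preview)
Your approach is essentially the paper's: reduce via Lemma~\ref{l:vanish}, use stationarity to land in $\Omega$, and invoke condition~(9). There is, however, one genuine gap in the final lifting step. You write that since $P(K)$ is compact, $(\mu_n)_{n\in I}$ has an accumulation point, and then ``by passing to a further infinite subset $I'\subseteq I$ we get $(\mu_n)_{n\in I'}\to\mu$''. Compactness alone does \emph{not} allow you to extract a convergent subsequence: $P(K)$ is explicitly \emph{not} sequentially compact (that is part~(iv) of the main theorem), so an accumulation point need not be a subsequential limit.

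The paper closes this gap by reversing your order of operations. It first takes an accumulation point $\mu$ of $(\mu_n)_{n\in I_\alpha}$ directly in $P(K)$, observes that $P\pi^{\w_1}_{\alpha+\w}(\mu)$ is an accumulation point of the projected sequence and hence nonatomic by~(9), and concludes (via exactly your atom-pushing argument) that $\mu$ itself is nonatomic. Only \emph{then} does it invoke Lemma~\ref{l:first-countable}: $P(K)$ is first-countable at every nonatomic measure, so a subsequence converging to $\mu$ can be extracted. Your argument contains all the ingredients for this except the appeal to Lemma~\ref{l:first-countable}; without it, the passage from ``accumulation point'' to ``limit of a subsequence'' is unjustified. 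Once you insert that lemma, your detour through the metrizability of $P(X_{\alpha+\w})$ becomes unnecessary, and your worries about club bookkeeping are overcautious: you only need $\bigcup_n\supp(\mu^\alpha_n)\subseteq X_\alpha$, which is immediate from $\pi^{\w_1}_\alpha[K]\subseteq X_\alpha$.
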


\begin{proof}  By Lemma~\ref{l:vanish}, the $c_0$-vanishing property of $(\mu_n)_{n\in\w}$ implies the existence of an ordinal $\upsilon\in\w_1$ such that for every countable ordinal $\alpha\ge\upsilon$ the sequence of measures $(P\pi^{\w_1}_\alpha(\mu_n))_{n\in\w}$ is $c_0$-vanishing.
By the choice of the transfinite sequence $\big((\mu_n^\alpha)_{n\in\w}\big)_{\alpha\in\w_1}$, the set
\[
S=\{\alpha\in\w_1:\;\mu_n^\alpha=P\pi^{\w_1}_\alpha(\mu_n)\; \mbox{ for all } n\in\w\}
\]
is stationary and hence contains a limit ordinal $\alpha\in S$ such that $\alpha\ge \upsilon$. By the choice of $\upsilon$, the sequence of measures $(\mu_n^\alpha)_{n\in\w}=\big(P\pi_\alpha^{\w_1}(\mu_n)\big)_{n\in\w}$ is $c_0$-vanishing and hence $\alpha\in \Omega$.
Let $\mu\in P(K)$ be any accumulation point of the sequence $(\mu_n)_{n\in I_\alpha}$. Then $P\pi_{\alpha+\w}^{\w_1}(\mu)$ is an accumulation point of the sequence $\big(P\pi_{\alpha+\w}^{\w_1}(\mu_n)\big)_{n\in I_\alpha}$.
Since the ordinal $\alpha$ belongs to the set $\Omega$, the inductive condition (9) guarantees that the measure $P\pi_{\alpha+\w}^{\w_1}(\mu)$ is nonatomic and so is the measure $\mu$. By Lemma~\ref{l:first-countable}, the space $P(K)$ is first-countable at $\mu$. Then there exists an infinite set $J\subseteq I_\alpha$ such that the sequence $(\mu_k)_{k\in J}$ converges to $\mu$.
\end{proof}

Claim~\ref{cl:converge} and Lemma~\ref{l:Pna-norm} imply

\begin{claim}
Every sequence in $P(K)$ that converges to an atomic measure $\mu\in P_a(K)$ converges to $\mu$ in norm.
\end{claim}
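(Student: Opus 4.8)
The plan is to obtain this claim as an immediate consequence of Lemma~\ref{l:Pna-norm} applied to the compact space $X=K$. To invoke that lemma one has to verify its two hypotheses for $K$. The first hypothesis is that $K$ be a sequentially discrete compact space; this is exactly the content of Claim~\ref{cl:seq} proved above (which in turn rests on the inductive condition (8) and the use of the diamond-enumerated sequence $\big((\mu^\alpha_n)_{n\in\w}\big)_{\alpha\in\w_1}$). The second hypothesis is that every $c_0$-vanishing sequence $(\lambda_n)_{n\in\w}$ of probability measures on $K$ admit a subsequence converging to a nonatomic measure on $K$; this is precisely Claim~\ref{cl:converge} (which relies on Lemmas~\ref{l:vanish} and \ref{l:first-countable} and on the inductive condition (9)).

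With both hypotheses in hand, Lemma~\ref{l:Pna-norm} applies verbatim and yields the conclusion: any sequence $(\mu_n)_{n\in\w}\in P(K)^\w$ converging to an atomic measure $\mu\in P_a(K)$ in the weak$^\ast$ topology converges to $\mu$ in norm. Since norm convergence trivially implies weak$^\ast$ convergence, this also shows that the weak$^\ast$ and norm topologies coincide on $P_a(K)$, which is clause (ii) of Theorem~\ref{t:main}.

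I do not expect any genuine obstacle in this step: the claim is a purely formal corollary, and the proof amounts to one sentence citing Claims~\ref{cl:seq} and \ref{cl:converge} together with Lemma~\ref{l:Pna-norm}. All the real work has already been carried out in establishing those two claims and in the preparatory Lemmas of Section~\ref{s:pre} and Section~3.
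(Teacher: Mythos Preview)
Your proposal is correct and matches the paper's own argument: the paper derives this claim in a single line from Claim~\ref{cl:converge} and Lemma~\ref{l:Pna-norm}. You are in fact slightly more thorough, since you explicitly invoke Claim~\ref{cl:seq} for the sequential-discreteness hypothesis of Lemma~\ref{l:Pna-norm}, which the paper leaves implicit.
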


\begin{claim} \label{cl:Pna-ssp}
The space $P_{na}(K)$ is  non-metrizable, first-countable, \v{C}ech-complete, sequentially compact, and the set $P_{na}(K)$ is dense in $P(K)$.
\end{claim}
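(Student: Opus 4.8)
The plan is to derive all five assertions from results already in hand, with essentially no new work.

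First I would record the structural facts. Since $K$ is crowded (Claim~\ref{cl:simple}), Lemma~\ref{l:Cech} gives that $P_{na}(K)$ is a dense $G_\delta$-subset of $P(K)$; this settles the density of $P_{na}(K)$ in $P(K)$ at once. It also yields \v{C}ech-completeness: the compact Hausdorff space $P(K)$ is \v{C}ech-complete, and a $G_\delta$-subspace of a \v{C}ech-complete space is \v{C}ech-complete (see \cite[Theorem~3.9.6]{Eng}). Since $K$ is simple (Claim~\ref{cl:Efimov}), Lemma~\ref{l:first-countable} tells us that $P(K)$ is first-countable at every point of $P_{na}(K)$, hence the subspace $P_{na}(K)$ is first-countable.

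Next I would prove sequential compactness. Let $(\mu_n)_{n\in\w}$ be any sequence in $P_{na}(K)$. Because every $\mu_n$ is nonatomic, $\sup_{x\in K}\mu_n(\{x\})=0$ for each $n$, so the sequence $(\mu_n)_{n\in\w}$ is (trivially) $c_0$-vanishing. By Claim~\ref{cl:converge}, it has a subsequence converging in $P(K)$ to some nonatomic measure $\mu\in P_{na}(K)$; as $P_{na}(K)$ carries the subspace topology, this subsequence converges to $\mu$ inside $P_{na}(K)$. Thus $P_{na}(K)$ is sequentially compact.

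Finally, non-metrizability is a soft consequence. The set $P_{na}(K)$ is a \emph{proper} dense subset of the compact Hausdorff space $P(K)$ — proper because $\delta[K]\subseteq P_a(K)$ is nonempty and disjoint from $P_{na}(K)$. A dense proper subset of a compact Hausdorff space is never compact, since its closure is the whole space. On the other hand, a metrizable sequentially compact space is compact. Hence $P_{na}(K)$, being sequentially compact and non-compact, cannot be metrizable. There is no genuine obstacle here: the only point that needs to be noticed is that a sequence of nonatomic measures is automatically $c_0$-vanishing, which is exactly what allows Claim~\ref{cl:converge} to be applied; everything else is bookkeeping over the lemmas and claims already established.
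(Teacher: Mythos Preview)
Your proposal is correct and follows essentially the same approach as the paper: density and the $G_\delta$ property come from Lemma~\ref{l:Cech}, first-countability from Lemma~\ref{l:first-countable}, sequential compactness from Claim~\ref{cl:converge} (via the observation that nonatomic sequences are trivially $c_0$-vanishing), and non-metrizability from the fact that a metrizable sequentially compact space would be compact and hence equal to all of $P(K)$. The only difference is that you spell out a few steps more explicitly, such as the reference to \cite[Theorem~3.9.6]{Eng} for \v{C}ech-completeness of $G_\delta$-subspaces and the reason $P_{na}(K)$ is a proper subset.
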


\begin{proof}
The first-countability, \v Cech-completeness, and density of $P_{na}(K)$ in $P(K)$  follows from Lemmas~\ref{l:first-countable} and \ref{l:Cech}, respectively. Claim~\ref{cl:converge} implies that the space $P_{na}(K)$ is sequentially compact.

To show that $P_{na}(K)$ is  non-metrizable, suppose for a contradiction that $P_{na}(K)$ is metrizable. Then  $P_{na}(K)$ being also sequentially compact must be compact. Since, by Lemma \ref{l:Cech}, $P_{na}(K)$  is dense in $P(K)$ we obtain that $P_{na}(K)=P(K)$, a contradiction. Thus $P_{na}(K)$ is  not metrizable.
\end{proof}

\begin{claim} \label{cl:Efimov-ssp}
The space $P(K)$ is selectively sequentially pseudocompact but not sequentially compact.
\end{claim}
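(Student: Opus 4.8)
The plan is to handle the two assertions of the claim separately, each as a short deduction from results already in hand.

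\emph{Selective sequential pseudocompactness.} First I would recall the elementary fact that every sequentially compact space is selectively sequentially pseudocompact: given a sequence $(U_n)_{n\in\w}$ of nonempty open subsets, choose any $x_n\in U_n$; then $(x_n)_{n\in\w}\in\prod_{n\in\w}U_n$ and sequential compactness yields a convergent subsequence. By Claim~\ref{cl:Pna-ssp}, the subspace $P_{na}(K)$ is sequentially compact, hence selectively sequentially pseudocompact, and it is dense in $P(K)$. Now Lemma~\ref{l:dense-ssp} applies verbatim and gives that $P(K)$ is selectively sequentially pseudocompact.

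\emph{Failure of sequential compactness.} Here the point is that $K$ is an infinite Efimov space. By Claim~\ref{cl:simple} the space $K$ is crowded, hence nonempty and without isolated points, so it is infinite; pick pairwise distinct points $x_n\in K$, $n\in\w$. By Claim~\ref{cl:seq}, $K$ is sequentially discrete, so $(x_n)_{n\in\w}$ has no convergent subsequence in $K$ (a convergent subsequence would be eventually constant, contradicting distinctness). Thus $K$ is not sequentially compact. Since $K$ is homeomorphic to the closed subspace $\{\delta_x:x\in K\}$ of $P(K)$, and closed subspaces of sequentially compact spaces are sequentially compact, it follows that $P(K)$ is not sequentially compact. (Alternatively, one can argue directly with Proposition~\ref{p:compact-no-sequence}: if some subsequence $(\delta_{x_{n_k}})_{k\in\w}$ converged to a measure $\mu\in P(K)$, then, since $\sum_k\mu(\{x_{n_k}\})\le 1$, we would have $\sup_{x\in K}\big(\delta_{x_{n_k}}(\{x\})-\mu(\{x\})\big)\ge 1-\mu(\{x_{n_k}\})\to 1$, contradicting the conclusion of that proposition.)

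I do not expect a genuine obstacle here: both halves are immediate once Claims~\ref{cl:simple}, \ref{cl:seq}, \ref{cl:Pna-ssp} and Lemma~\ref{l:dense-ssp} are available. The only subtlety worth flagging is that one must not expect the sequential compactness of the dense subspace $P_{na}(K)$ to propagate to $P(K)$; it does not, precisely because $P_{na}(K)$ is a proper dense $G_\delta$ (Lemma~\ref{l:Cech}) and hence is not closed in $P(K)$ — and this discrepancy is exactly what clause (iv) of Theorem~\ref{t:main} is recording.
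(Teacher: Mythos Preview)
Your proof is correct and follows the same route as the paper: selective sequential pseudocompactness via the dense sequentially compact subspace $P_{na}(K)$ together with Lemma~\ref{l:dense-ssp}, and failure of sequential compactness via the closed non-sequentially-compact subspace $K\subseteq P(K)$. The paper's version is terser (it simply asserts $K$ is non-sequentially compact rather than exhibiting a sequence), but the substance is identical.
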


\begin{proof}
The space  $P(K)$ is selectively sequentially pseudocompact by Lemma \ref{l:dense-ssp} and Claim \ref{cl:Pna-ssp}.  Since $K$ is a closed  non-sequentially compact subspace of $P(K)$, the space $P(K)$ is also  non-sequentially compact.
\end{proof}

\begin{claim}
The Banach space $C(K)$ has the Gelfand--Phillips property.
\end{claim}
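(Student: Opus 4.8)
The plan is to obtain this claim as an immediate consequence of Claim~\ref{cl:Efimov-ssp} together with the results recalled in the Introduction, so that the substantive work is already done.

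First I would recall the relevant notions to fix notation: a bounded set $B$ in a Banach space $E$ is \emph{limited} if $\lim_n\sup\{|\chi_n(x)|:x\in B\}=0$ for every weak$^\ast$ null sequence $(\chi_n)_{n\in\w}$ in $E^*$, and $E$ has the \emph{Gelfand--Phillips property} if every limited subset of $E$ is precompact. For $E=C(K)$ the dual is $C'(K)$, so a limited subset of $C(K)$ is one on which every weak$^\ast$ null sequence of measures on $K$ converges uniformly.

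Next I would invoke the theorem of Drewnowski and Emmanuele \cite[Theorem~4.1]{DrewEm} in the strengthened form established in \cite{BG-GP-Banach}: if some subspace $A\subseteq P(K)$ containing the canonical copy $\delta[K]$ of $K$ is selectively sequentially pseudocompact, then $C(K)$ has the Gelfand--Phillips property. By Claim~\ref{cl:Efimov-ssp} the whole space $P(K)$ is selectively sequentially pseudocompact, hence we may take $A=P(K)$ and conclude that $C(K)$ has the Gelfand--Phillips property. This establishes clause~(v) of Theorem~\ref{t:main}.

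There is no genuine obstacle here; the only points to verify are that the hypothesis of the quoted theorem is exactly what Claim~\ref{cl:Efimov-ssp} provides and that $K$ is identified with the closed subspace $\delta[K]\subseteq P(K)$, as noted in Section~\ref{s:pre}. If one preferred an argument avoiding the black box of \cite{BG-GP-Banach}, one could proceed directly in the style of Drewnowski and Emmanuele: assuming $B\subseteq C(K)$ is limited but not precompact, extract an $\e>0$ and a norm $\e$-separated sequence $(f_k)_{k\in\w}$ in $B$, then use it to manufacture a sequence of nonempty weak$^\ast$-open subsets of $P(K)$ along which no choice of measures admits a weak$^\ast$-convergent subsequence, contradicting the selective sequential pseudocompactness of $P(K)$ furnished by Claim~\ref{cl:Efimov-ssp}. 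But the one-line deduction from the already quoted result is the route I would take.
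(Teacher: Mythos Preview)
Your proposal is correct and matches the paper's own proof essentially verbatim: the paper simply cites Corollary~2.2 of \cite{BG-GP-Banach} (the strengthened Drewnowski--Emmanuele result you describe) and combines it with Claim~\ref{cl:Efimov-ssp}. The additional definitional reminders and the sketch of a direct argument are extras, but the core deduction is identical.
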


\begin{proof}
Corollary 2.2 of \cite{BG-GP-Banach} states that if $P(K)$ is selectively sequentially pseudocompact, then $C(K)$ has the Gelfand--Phillips property. It remains to note that $P(K)$ is selectively sequentially pseudocompact by Claim \ref{cl:Efimov-ssp}.
\end{proof}

\section{Acknowledgements}

The first author expresses his sincere thanks to Grzegorz Plebanek for an inspiring discusion on the construction of a simple Efimov space from his joint paper with  Mirna D\v{z}amonja \cite{DP}.



\begin{thebibliography}{}

\bibitem{Ban1}
T. Banakh, {\em The topology of spaces of probability measures, I: functors $\mathcal{P}_\tau$ and $\hat{P}$.} Matematychni Studii \textbf{5} (1995), 65--87; (English transl.: {\tt arxiv.org/pdf/1112.6161.pdf}).

\bibitem{Ban2}
T. Banakh, {\em The topology of spaces of probability measures, II: barycenters of probability Radon measures and metrization of the functors $\mathcal{P}_\tau$ and $\hat{P}$.} Matematychni Studii \textbf{5} (1995), 88-106; (English transl.: {\tt arxiv.org/pdf/1206.1727.pdf}).

\bibitem{BG-GP-Banach}
T. Banakh, S. Gabriyelyan, {\em Banach spaces with the (strong) Gelfand--Phillips property}, preprint\\ ({\tt arxiv.org/abs/2110.07991}).






\bibitem{Bog-18}
V.I.~Bogachev, \emph{Weak convergence of measures}, AMS, Mathematical survays and monographs v. 234, 2018.











\bibitem{DAS1}
A.~Dorantes-Aldama, D.~Shakhmatov, {\em Selective sequential pseudocompactness}, Topology Appl. {\bf 222} (2017), 53--69.

\bibitem{DrewEm}
L. Drewnowski, G. Emmanuele, \emph{On Banach spaces with the Gelfand--Phillips property, II}, Rend. Circ. Mat. Palermo \textbf{38} (1989), 377--391.


\bibitem{Dow}
A. Dow, \emph{ Efimov spaces and the splitting number}, Topology Proc. \textbf{29}:1 (2005), 105--113.

\bibitem{DM}
A.~Dow, R.~Pichardo-Mendoza, \emph{Efimov spaces, CH, and simple extensions}, Topology Proc. \textbf{33} (2009), 277--283.
	
\bibitem{DP}
M.~D\v zamonja, G.~Plebanek, \emph{On Efimov spaces and Radon measures}, Topology Appl. \textbf{154}:10 (2007), 2063--2072.

\bibitem{Eng}
R.~Engelking, \emph{General Topology}, Heldermann Verlag, Berlin, 1989.


\bibitem{Fed75}
V.V.~Fedorchuk, {\em A bicompactum whose infinite closed subsets are all $n$-dimensional}, (Russian) Mat. Sb. (N.S.) {\bf 96(138)} (1975), 41--62 (in Russian).

\bibitem{Fed76}
V.V.~Fedorchuk, {\em Completely closed mappings, and the compatibility of certain general topology theorems with the axioms of set theory}  (Russian) Mat. Sb. (N.S.) {\bf 99} (141) (1976), 3--33.

\bibitem{Fed77}
V.V. Fedor\v{c}uk, \emph{A compact space having the cardinality of the continuum with no convergent sequences}, Math. Proc. Cambridge Phil. Soc. \textbf{81} (1977), 177--181.

\bibitem{Fedorchuk-s}
V.V. Fedorchuk, \emph{Probability measures in Topology}, Uspehi Mat. Nauk \textbf{46} (1991) 41--80 (in Russian); English transl.: Russian Math. Surveys \textbf{46} (1991), 45--93.


\bibitem{Jensen} R.~Jensen, {\em The fine structure of the constructible hierarchy. With a section by Jack Silver.} Ann. Math. Logic 4 (1972), 229--308.







\bibitem{Hart}
K.P.~Hart, {\em Efimov's problem}, in: {\em Open Problems in Topology, II} (E.~Pearl, ed.), 171--177, Elsevier, 2007.

\bibitem{Haydon-72}
R. Haydon, \emph{On dual $L^1$-spaces and injective bidual Banach spaces}, Isr. J. Math. \textbf{31} (1972), 142--152.

\bibitem{Jar}
H.~Jarchow, \emph{Locally Convex Spaces}, B.G. Teubner, Stuttgart, 1981.




\bibitem{Kop}
S.~Koppelberg, \emph{ Minimally generated Boolean algebras}, Order, \textbf{5}:4 (1989),  393--406.

\bibitem{Linden}
J. Lindenstrauss, \emph{Weakly compact sets, their topological properties and Banach spaces they generate}, Proc. Symp. Infinite Dim. Topology, 1967, Annals of Math. Studies, Princeton Univ. Press, 1972.


\bibitem{Nyikos}
P.J.~Nyikos, {\em Classic problems—25 years later. I.} Proceedings of the Spring Topology and Dynamical Systems Conference (Morelia City, 2001). Topology Proc. {\bf 26}:1 (2001/02), 345--356.






\bibitem{Pol}
R.~Pol, \emph{ Note on the spaces of regular probability measures whose topology is determined by countable subsets}, Pacific J. Math. \textbf{100}  (1982), 185--201.


\bibitem{Schlumprecht-Ph} T. Schlumprecht, \emph{Limited sets in Banach spaces}, Dissertation, Univ. Munich, 1987.

\bibitem{TZ} A.~Teleiko, M.~Zarichnyi, {\em Categorical Topology of compact Hausdorff spaces}, VNTL Publ., Lviv, 1999.




\end{thebibliography}
\end{document}